\def\Xint#1{\mathchoice
    {\XXint\displaystyle\textstyle{#1}}%
    {\XXint\textstyle\scriptstyle{#1}}%
    {\XXint\scriptstyle\scriptscriptstyle{#1}}%
    {\XXint\scriptscriptstyle\scriptscriptstyle{#1}}%
    \!\int}
\def\XXint#1#2#3{{\setbox0=\hbox{$#1{#2#3}{\int}$ }
        \vcenter{\hbox{$#2#3$ }}\kern-.6\wd0}}
\def\dashint{\Xint-}
\newtheorem{theorem}{Theorem}[section]
\newtheorem{lemma}[theorem]{Lemma}
\newtheorem{corollary}[theorem]{Corollary}
\theoremstyle{definition}
\newtheorem{definition}[theorem]{Definition}
\theoremstyle{remark}
\newtheorem{remark}[theorem]{Remark}
\numberwithin{equation}{section}
\newcommand{\Rmnum}[1]{\expandafter\@slowromancap\romannumeral #1@}
\begin{document}
\allowdisplaybreaks[1]

\title[ Fractional Equations in Grushin-type Spaces ]{Local Behavior of Fractional  Integro-Differential Equations in Grushin-type Spaces }
\author[B. Xu]{Boxiang Xu}
\address{School of Mathematics and Physics, University of Science and Technology Beijing, Beijing 100083, China}
\email{cellinia1901@163.com}

\author[Y. Liu]{Yu Liu}
\address{School of Mathematics and Physics, University of Science and Technology Beijing, Beijing 100083, China}
\email{liuyu75@pku.org.cn} \thanks {The corresponding author is Yu Liu, E-mail: liuyu75@pku.org.cn}

\author[S. Shi]{Shaoguang Shi}
\address{School of Mathematics and Statistics, Linyi University, Linyi 276005, China}
\email{shishaoguang@mail.bnu.edu.cn}

\maketitle

{\Small {\bf Abstract:} \ In this paper, we develop the De
Giorgi-Nash-Moser  theory for a broad class of nonlinear equations
governed by nonlocal, possibly degenerate, integro-differential
operators. The fractional $p$-Laplacian operator, in the context of
Grushin-type spaces $\mathbb{G}^n$, serves as a prototypical
example. Among our findings, we demonstrate that the weak solutions
to these problems are both bounded and H\"{o}lder continuous.
Additionally, we establish general estimates, including fractional
Caccioppoli-type estimates with tail terms and logarithmic-type
bounds.

    \vspace{0.2cm} {\bf Keywords:} {quasilinear nonlocal operators, H{\"o}lder continuity, Grushin-type space, fractional Grushin operator}

    {\bf 2020 Mathematics Subject Classification:} { 35B05, 35B45, 53C17, 35B65. }}
\tableofcontents

\begin{section}{Introduction}\label{section_Introduction}
This paper aims to establish some regularity results for nonlocal integro-differential operators and minimizers of fractional order $s \in (0,1)$  and summability $p>1$ in Grushin-type spaces, which is denoted as ${\mathbb{G}}^{n}$ in our paper. Let $\Omega$ be an open, bounded subset of the Grushin-type spaces ${\mathbb{G}}^{n}$ and consider $g$ as a function belonging to the fractional Sobolev space $W^{s,p}\left( {{\mathbb{G}}^{n}} \right)$. We will establish general local regularity estimates for the minimizers $u$, for which $u$ minimizes the functional
\begin{equation}\label{F function}
    \mathcal{F}(v):=\int_{{\mathbb{G}}^{n}} \int_{{\mathbb{G}}^{n}} {{\left| {d_{C}(x,y)} \right|}^{-Q-sp}{\left| {v(x)-v(y)} \right|}^{p}} \mathrm{d}x\mathrm{d}y,
\end{equation}
over the class of functions $\left\{ {v \in W^{s,p}\left({{\mathbb{G}}^{n}} \right): v=g \;\:\mathrm{a.e.\in}\;{\mathbb{G}}^{n}\backslash \Omega} \right\}$, where $Q=n+m(n-h)$ is the usual homogeneous dimension of the Grushin-type spaces ${\mathbb{G}}^{n}$ which will be specifically introduced in Section \ref{section_Preliminaries}. Here, $d_{C}(\cdot ,\cdot)$ represents the Carnot-Carath\'eodory distance in ${\mathbb{G}}^{n}$, which will be specifically introduced in Section \ref{section_Preliminaries} (see  \eqref{Carnot-Caratheodory distance}). As demonstrated in Theorem  \ref{theo_Equivalence between weak solutions and the minimizers} below, minimizers are equivalently characterized as the weak solutions to the following integro-differential problems
\begin{equation}\label{integro-differential problem}
    \left \{
    \begin{aligned}
        &\mathcal{L}u=0&&\mathrm{in}\,\,\Omega,   \\
        &u=g &&\mathrm{in}\,\,{\mathbb{G}}^{n}\backslash\Omega,
    \end{aligned}
    \right.
\end{equation}
in which the operator $\mathcal{L}$ is formally defined as
\begin{equation}\label{nonlocal operator L}
    \mathcal{L}u(x)=P.V.\int_{{\mathbb{G}}^{n}} {\frac{{\left| {u(x)-u(y)} \right|}^{p-2}(u(x)-u(y))}{{\left| {d_{C}(x,y)} \right|}^{Q+sp}}}\mathrm{d}y, \quad\, x\in{\mathbb{G}}^{n},
\end{equation}
where the symbol $P.V.$ means in the principal value sense.

Recently, the investigation of problems involving fractional Sobolev spaces  and the corresponding nonlocal equations has garnered significant attention, both from a purely mathematical standpoint and in specific applications. This interest stems from the natural occurrence of such problems in various contexts. Despite the relatively short history of this field, the existing literature is extensive, precluding a comprehensive treatment in a single paper. For an introductory overview of this topic and a list of relevant references, see  Nezza-Palatucci-Valdinoci's work in \cite{di2012hitchhiker} and some supplementary results when studying other issues, see e.g. \cite{SX1,SX2,Wa}.

Regarding regularity and related results, the theory in the fractional Euclidean case has recently made significant strides. Even in the nonlinear setting, where $p\ne 2$, substantial progress has been made for the Euclidean analogues of \eqref{integro-differential problem}, where ${\left| {x-y} \right|}^{-n-sp}$ or more general expressions replace ${\left| {d_{C}(x,y)} \right|}^{-Q-sp}$. This includes findings on the boundedness, Harnack inequalities, and H{\"o}lder continuity (up to the boundary) for the fractional $p$-Dirichlet problems, as documented in \cite{brasco2018higher,di2014nonlocal,di2016local,iannizzotto2016global,korvenpaa2016obstacle,SZW}. Moreover, the equivalence of solutions defined through integration by parts with test functions, as viscosity solutions, and via comparison to \eqref{integro-differential problem} has been established by Korvenp \cite{korvenpaa2019equivalence} and Lindgren and Lindqvist \cite{lindgren2016perron}, some relevent findings, see also \cite{SZZ,S}. For a comprehensive overview of these and related findings, please consult the work of Palatucci \cite{palatucci2018dirichlet}.

In recent years, analogous fractional frameworks have yielded a wealth of corresponding results in the context of the Heisenberg group. In \cite{roncal2016hardy}, Roncal and Thangavelu offer an explicit integral definition for the case $p=2$
and establish several Hardy inequalities for the conformally invariant fractional powers of the sub-Laplacian, thereby extending some of the important findings from \cite{frank2008hardy} by Frank, Lieb, and Seiringer to the Heisenberg group. Furthermore, the Heisenberg type uncertainty inequalities proved for the sub-Laplacian in Danielli's \cite{danielli2011hardy} are extended to the fractional framework. Similar conclusions to those observed in the Euclidean case have also been established in the Heisenberg group, and a few of these are briefly outlined here. In \cite{manfredini2023holder}, Manfredini establishes the boundedness and H{\"o}lder continuity of weak solutions to similar integro-differential problems as \eqref{integro-differential problem} by deriving fractional Caccioppoli-type estimates with tail and logarithmic-type estimates. These results were further generalized to more general cases in \cite{fang2024regularity} by Fang and Zhang. Palatucci and Piccinini prove general Harnack inequalities for the related weak solutions in \cite{palatucci2022nonlocal}. In \cite{piccinini2022obstacle}, Piccinini demonstrates the existence and uniqueness of the solution to the obstacle problem associated with a broad class of nonlinear integro-differential operators, and that solutions inherit the regularity properties of the obstacle, including boundedness, continuity, and H{\"o}lder continuity up to the boundary.

In the context of the fractional framework, the derivation of the desired local estimates necessitates the exploration of nonlocal estimates. To this end, we would like to emphasize a particular quantity that will serve as the foundation of this paper. Namely, we introduce the nonlocal tail of a function $v\in W^{s,p}\left( {{\mathbb{G}}^{n}} \right)$ in the ball $B\left( {x_{0},R} \right) \subset {\mathbb{G}}^{n}$ as
\begin{equation}\label{nonlocal tail}
    \mathrm{Tail} \left( {v;x_{0},R} \right) :=\left[ {R^{sp}\int_{{\mathbb{G}}^{n}\backslash B\left( {x_{0},R} \right)} {{\left| {v(x)} \right|}^{p-1}\left| {d_{C}\left( {x,x_{0}} \right)} \right|^{-Q-sp}}\mathrm{d}x} \right]^{\frac{1}{p-1}},
\end{equation}
where $Q$ is the
homogeneous dimension of ${\mathbb{G}}^{n}$ (see Subsection \ref{subsection_Grushin-type Spaces and Notation}). In the standard Euclidean framework, the nonlocal tail has already been established as a critical component in proofs requiring precise quantitative control of the long-range interactions that naturally emerge when dealing with nonlocal operators, as seen in \eqref{nonlocal operator L}; see e.g. the work of Castro, Kuusi, and Palatucci in \cite{di2016local}.

At this stage, we  present  our first result  as follows.

\begin{theorem}[Local boundedness]\label{theo_local boundedness}
    Let $p\in (\max\left\{ {m(n-h),1} \right\},+\infty)$, $s\in \left( {0,1-\frac{m(n-h)}{p}} \right)$, $B\left( {x_{0},r} \right) \subset \Omega$,
    where $m,n,h$ appear in Subsection \ref{subsection_Grushin-type Spaces and Notation},  and let $u\in W^{s,p}\left( {{\mathbb{G}}^{n}} \right)$
    be a weak subsolution to problem \eqref{integro-differential problem}.  Then the following estimate
    \begin{equation}\label{local boundedness}
        \mathop{\sup}_{B\left( {x_{0},r/2} \right)} u\le \delta \frac{\mathrm{Tail} \left( {u_{+};x_{0},r/2} \right)} {r^{\frac{n-Q}{p-1}}\left( {{\left| {x_{0}} \right|_h}+r} \right)^{\frac{m(n-h)}{p-1}}}+c\delta ^{-\frac{(2n-Q)(p-1)}{sp^2}} {\left( {\dashint_{B\left( {x_{0},r} \right)} {u_{+}^{p}}\mathrm{d}x} \right)}^{\frac{1}{p}}
    \end{equation}
    holds, where $u_{+}:=\mathrm{max}\left\{ {u,0} \right\}$, $\delta \in\left( {0,1} \right]$, $\left| {x_{0}} \right|_h$ is the distance  in \eqref{Grushin-type spaces} in Section \ref{section_Preliminaries}, and the constant $c$ depends only on $n,h,p,s,m$ and $\Omega$.
\end{theorem}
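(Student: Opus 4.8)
The plan is to run a De Giorgi iteration adapted to the Grushin geometry. First I would set up the standard machinery: fix the ball $B(x_0,r)$, choose radii $r_j = r/2 + r/2^{j+1}$ decreasing to $r/2$, set concentric balls $B_j = B(x_0,r_j)$, and pick increasing truncation levels $k_j = (1-2^{-j})k$ for a parameter $k>0$ to be fixed (in terms of $\delta$ and the tail) at the end. The key quantities are the excess functions $w_j = (u-k_j)_+$ and the level sets $A_j = \{x \in B_j : u(x) > k_j\}$; I will track $Y_j := \fint_{B_j} w_j^p\,dx$ and aim for a recursive inequality of the form $Y_{j+1} \le C\, b^j\, Y_j^{1+\varepsilon}$ for some $b>1$ and $\varepsilon>0$, which forces $Y_j \to 0$ provided $Y_0$ is small enough, yielding $\sup_{B(x_0,r/2)} u \le k$.

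To obtain the recursion, the main analytic input is the fractional Caccioppoli inequality with tail (which the paper has established earlier, in the form used for Heisenberg-type arguments à la \cite{manfredini2023holder, di2016local}): testing the weak subsolution formulation against a cutoff times $w_j$, one controls the Gagliardo-type energy of $\phi w_j$ on $B_j$ by a bulk term $\int_{B_j} w_j^p \phi^p \, d_C(x,y)^{-Q-sp}$-type contribution plus a nonlocal tail contribution of the form $\big(\sup_{x\in B_{j+1}}\mathrm{Tail}(w_j;x_0,r_j)\big)^{p-1} \int w_j \phi^p$. Then I would invoke the fractional Sobolev inequality in Grushin-type space — here is where the dimensional restrictions $p > \max\{m(n-h),1\}$ and $s < 1 - m(n-h)/p$ enter, guaranteeing $sp < Q$ and the relevant embedding $W^{s,p} \hookrightarrow L^{p^*}$ with $p^* = Qp/(Q-sp)$ — applied to $\phi w_j$, to upgrade the $L^p$ energy bound into an $L^{p^*}$ bound on $w_j$ over $B_{j+1}$. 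Combining with Hölder's inequality on the level set $A_{j+1}$ (whose measure is estimated by $|A_{j+1}| \le (k_{j+1}-k_j)^{-p}\int_{B_j} w_j^p \le (2^{j+1}/k)^p Y_j |B_j|$) produces the gain $Y_{j+1} \lesssim 2^{j p^*/(p^*-p)} k^{-\text{something}} Y_j^{1 + sp/Q}$, i.e. the desired super-linear recursion, with the geometric factors $r^{(n-Q)/(p-1)}$ and $(|x_0|_h + r)^{m(n-h)/(p-1)}$ emerging from how the Grushin ball measure $|B(x_0,\rho)|$ scales — it is comparable to $\rho^n(|x_0|_h+\rho)^{m(n-h)}$ rather than a pure power of $\rho$, which is the source of those unusual exponents in \eqref{local boundedness}.

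The final bookkeeping step chooses $k$ so that the smallness condition $Y_0 \le$ (critical threshold) holds. Splitting $u_+$ as its contribution inside $B(x_0,r)$ (controlled by $\fint_{B(x_0,r)} u_+^p$) and outside (controlled by the tail term), and optimizing in the free parameter $\delta$, one is led precisely to $k = \delta\, \mathrm{Tail}(u_+;x_0,r/2)\, r^{-(n-Q)/(p-1)}(|x_0|_h+r)^{-m(n-h)/(p-1)} + c\,\delta^{-(2n-Q)(p-1)/(sp^2)} \big(\fint_{B(x_0,r)} u_+^p\big)^{1/p}$; the exponent of $\delta$ comes from balancing the $j$-geometric constant against the power $Y_0^{sp/Q}$ through the De Giorgi convergence lemma (the standard "$Y_j \le C b^j Y_j^{1+\varepsilon} \Rightarrow Y_j \to 0$ if $Y_0 \le C^{-1/\varepsilon} b^{-1/\varepsilon^2}$" criterion), which accounts for the $1/(sp^2)$ and the homogeneous-dimension factor $(2n-Q)$ in the exponent.

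I expect the main obstacle to be the careful tracking of the Grushin ball-measure asymptotics $|B(x_0,\rho)| \asymp \rho^n(|x_0|_h+\rho)^{m(n-h)}$ through every application of Hölder, the Sobolev embedding, and the averaged integrals, so that the two distinct geometric weights $r^{(n-Q)/(p-1)}$ and $(|x_0|_h+r)^{m(n-h)/(p-1)}$ come out with exactly the right exponents — in the Euclidean/Heisenberg settings the measure is a pure power of the radius and this subtlety is absent, so this is where the Grushin-specific work concentrates. A secondary technical point is ensuring the tail term $\mathrm{Tail}(w_j;x_0,r_j)$ at each stage is dominated by $\mathrm{Tail}(u_+;x_0,r/2)$ uniformly in $j$ (using $w_j \le u_+$ and $B_j \supset B(x_0,r/2)$, together with a comparison of the kernels $d_C(x,x_0)^{-Q-sp}$ at different base radii), which is routine but must be done with the Carnot–Carathéodory distance rather than a Euclidean one.
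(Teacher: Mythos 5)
Your overall scheme --- a De Giorgi iteration on shrinking balls $B_j$ with levels $k_j$ increasing to $k$, driven by the Caccioppoli inequality with tail, a fractional Poincar\'e--Sobolev step, and a final choice of $k$ interpolating the tail term and the $L^p$ average through $\delta$ --- is exactly the paper's strategy, and your remarks on the tail comparison and on tracking the ball-measure asymptotics are on target. However, there is a concrete gap in the Sobolev step, and it propagates into the final exponent. You invoke the embedding $W^{s,p}\hookrightarrow L^{p^*}$ with $p^*=Qp/(Q-sp)$, justified by ``$sp<Q$''. In the Grushin setting the measure $|B(x,\rho)|\sim\rho^{n}(|x|_h+\rho)^{m(n-h)}$ is not Ahlfors $Q$-regular uniformly in the center (away from $\{|x|_h=0\}$ it behaves like $\rho^{n}$), so no such embedding with center-independent constants is available; the paper instead works with the exponent $p^{*}=(2n-Q)p/(2n-Q-sp)$, obtained by combining its fractional Poincar\'e inequality (Lemma \ref{lemma_Poincare inequality}), whose constant carries the weight $r^{-2n+Q+sp}(|x_0|_h+r)^{-2m(n-h)}$, with Lemma \ref{lemma_measure of ball in Grushin spaces}. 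This is not cosmetic: the iteration exponent is $\beta=p^{*}/p-1$, and the power of $\delta$ produced by the fast-geometric-convergence lemma is $-p^{*}(p-1)/(p^{2}\beta)$. With your choice of $p^{*}$ this equals $-Q(p-1)/(sp^{2})$, whereas the theorem asserts $-(2n-Q)(p-1)/(sp^{2})$; only the paper's exponent reproduces \eqref{local boundedness}.

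A second gap: your claim that the hypotheses $p>\max\{m(n-h),1\}$ and $s<1-m(n-h)/p$ ``guarantee $sp<Q$'' (or, in the paper's normalization, $sp<2n-Q$) is false. They give $sp<p-m(n-h)$, which does not bound $sp$ by $n-m(n-h)=2n-Q$ once $p>n$. Accordingly the paper must split the proof into the subcritical case $sp<2n-Q$ and the borderline case $sp=2n-Q$, the latter handled by lowering $s$ to some $s_1<s$ and running the iteration with an intermediate exponent $q\in(p,p_1^{*})$, $p_1^{*}=(2n-Q)p/(2n-Q-s_1p)$. Your proposal contains no analogue of this second case, so even with the corrected exponent it would not cover the full range of $s$ and $p$ allowed by the statement.
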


It should be noted that the above theorem reduces to the case of \cite[Theorem 1.1]{di2016local} when the parameter $m$ is zero.

Utilizing the Logarithmic Lemma and the Caccioppoli estimate with tail, along with the derived local boundedness, we establish our second result, which is the following H{\"o}lder continuity theorem.

\begin{theorem}[H{\"o}lder continuity]\label{theo_holder continuity}
    Let $p\in (\max\{m(n-h),1\},+\infty)$, $s\in \left( {0,1-\frac{m(n-h)}{p}} \right)$ and let $u\in W^{s,p}\left( {{\mathbb{G}}^{n}} \right)$ be a solution to the problem \eqref{integro-differential problem}. Then $u$ is locally  H{\"o}lder continuous in $\Omega$. In particular, there are positive constants $\alpha <sp/(p-1)$ and $c>0$, both depending only on $n,h,p,s,m,\Omega$ and $\|u\|_{L^\infty(B(x_0,r))}$, such that if $B\left( {x_{0},2r} \right) \subset \Omega$, then
    \begin{equation*}
        \mathop{\mathrm{osc}}_{B\left( {x_{0},\varrho } \right)} u \le c\left( {\frac{\varrho}{r}} \right)^{\alpha}\left[ {\frac{\mathrm{Tail} \left( {u;x_{0},r} \right)}{r^{\frac{n-Q}{p-1}}\left( {{\left| {x_{0}} \right|_h}+r} \right)^{\frac{m(n-h)}{p-1}}}+{\left( {\dashint_{B\left( {x_{0},2r} \right)} {\left| {u} \right|^{p}}\mathrm{d}x} \right)}^{\frac{1}{p}}} \right]
    \end{equation*}
    holds for any $\varrho \in \left( {0,r} \right]$.
\end{theorem}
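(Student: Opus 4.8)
The plan is to prove Hölder continuity by the classical De Giorgi–Nash–Moser oscillation-decay method, adapted to the nonlocal Grushin setting, using the three tools already at our disposal: local boundedness (Theorem \ref{theo_local boundedness}), the fractional Caccioppoli estimate with tail, and the Logarithmic Lemma. Fix $B(x_0,2r)\subset\Omega$ and set $M:=\|u\|_{L^\infty(B(x_0,r))}$, which is finite by Theorem \ref{theo_local boundedness}. The heart of the argument is a \emph{single-step oscillation reduction}: there exist $\lambda\in(0,1)$ and $j_0\in\mathbb N$, depending only on $n,h,p,s,m$ and $M$, such that writing $r_j:=\sigma^j r$ for a fixed dilation factor $\sigma\in(0,1)$ and
\[
\mathrm{osc}_{B(x_0,r_j)}u + \mathrm{Tail}\text{-correction} \le \omega_j,\qquad \omega_{j+1}\le \lambda\,\omega_j,
\]
for all $j\ge j_0$, where $\omega_j$ is a suitable combination of the genuine oscillation on $B(x_0,r_j)$ and a weighted tail quantity measured at scale $r_j$. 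Iterating this geometric decay and optimizing the choice of $\alpha$ with $\sigma^\alpha=\lambda$ (so $\alpha<sp/(p-1)$ comes out of the Logarithmic Lemma's natural scaling, exactly as in the Euclidean and Heisenberg cases) yields the stated estimate for $\varrho\in(0,r]$ by a standard covering/interpolation of the dyadic radii.

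To obtain one oscillation-reduction step, I would argue by a De Giorgi-type alternative. After rescaling so that $\mathrm{osc}_{B(x_0,r_j)}u\le \omega_j$, consider the function $w:=u-\operatorname{median}$, normalized so that $|w|\le \omega_j/2$ on $B(x_0,r_j)$ up to tail corrections. One of the two sets $\{w>0\}\cap B(x_0,r_j/2)$ or $\{w<0\}\cap B(x_0,r_j/2)$ occupies at least half the measure of the ball; say it is the first. Apply the Logarithmic Lemma to $v:=(\,\omega_j/2 - w\,)_+$ (which is a nonnegative supersolution-type quantity on the relevant ball): it gives an integral bound on $\bigl|\log\frac{\omega_j/2+d}{v+d}\bigr|$ averaged over $B(x_0,r_j/4)$ with an additive tail term, where $d>0$ is a small shift parameter. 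Combining this logarithmic bound with the measure-density information propagates a pointwise-in-measure smallness: the superlevel sets $\{v\le 2^{-k}(\omega_j+d)\}$ have measure tending to zero as $k\to\infty$, \emph{quantitatively}. Feeding this into the Caccioppoli estimate with tail and running the De Giorgi iteration on the rescaled function then forces $u\le \sup_{B(x_0,r_j)}u - \nu\,\omega_j$ on $B(x_0,r_{j+1})$ for a structural $\nu\in(0,1)$, provided the tail contributions at scale $r_j$ are controlled by $\omega_j$ — and the definition of $\omega_j$ is rigged precisely so that they are. This gives $\omega_{j+1}\le(1-\nu/2)\omega_j$ after re-including the tail correction, which is the desired $\lambda$.

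The two technical points requiring care, beyond transcribing the Euclidean/Heisenberg scheme, are: \textbf{(i)} the appearance of the Grushin weights $r^{(n-Q)/(p-1)}(|x_0|_h+r)^{m(n-h)/(p-1)}$ in front of the tail — these arise from the non-homogeneity of balls in $\mathbb{G}^n$ and must be tracked consistently through every rescaling so that the iterated tail terms telescope rather than accumulate; keeping the quantity $\omega_j$ defined with exactly this weighted tail is what makes the induction close. \textbf{(ii)} The Carnot–Carathéodory balls are not dilates of a fixed ball in the Euclidean sense, so the change-of-variables in the Caccioppoli and logarithmic estimates must use the Grushin dilations and the doubling property of $\mu_\kappa$; the constants stay uniform because the structural constants in Theorem \ref{theo_local boundedness}, the Caccioppoli estimate, and the Logarithmic Lemma are all scale-invariant in the Grushin sense. \textbf{The main obstacle} I anticipate is item (i): ensuring that the long-range (tail) interactions, which in each oscillation step are only bounded by the \emph{previous} oscillation plus a weighted tail at the previous scale, do not degrade the geometric rate — this is exactly where the nonlocal argument differs essentially from the local De Giorgi method, and it forces the somewhat delicate bookkeeping of choosing $\sigma$ small enough (depending on $s,p$ and the decay exponent of the tail kernel) before fixing $\lambda$, and of absorbing the tail via the restriction $\alpha<sp/(p-1)$.
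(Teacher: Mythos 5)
Your outline coincides with the paper's proof in all essential respects: dyadic scales $r_j=\sigma^j r/2$, an induction hypothesis $\mathrm{osc}_{B_j}u\le\omega(r_j)$ with $\omega(r_j)=(r_j/r_0)^\alpha\omega(r_0)$, the two-alternatives reduction to a nonnegative $u_j$, control of $\mathrm{Tail}(u_j;x_0,r_j)$ by $\sigma^{-\alpha(p-1)}\omega(r_j)^{p-1}$ using the induction hypothesis, the Logarithmic Lemma (via its Poincar\'e-type corollary) to get the measure-density bound $c_{\log}/\log(1/\sigma)$ on $\{u_j\le 2\epsilon\omega(r_j)\}$ with $\epsilon=\sigma^{sp/(p-1)-\alpha}$, and a Caccioppoli-driven De Giorgi iteration closing the step, with $\sigma$ fixed first and then $\alpha<sp/(p-1)$ chosen so that $\sigma^{\alpha}\ge 1-\epsilon$. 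This is exactly the paper's argument, including your identification of the tail bookkeeping and the Grushin weight $r^{(n-Q)/(p-1)}(|x_0|_h+r)^{m(n-h)/(p-1)}$ as the delicate points.
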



For the proofs of the H{\"o}lder continuity and the local boundedness, the precise estimates play a critical role, specifically the Caccioppoli-type estimate (see Theorem \ref{theo_Caccioppoli estimates with tail} below) and the logarithmic-type estimate (see forthcoming Lemma \ref{lemma_Logarithmic Lemma}). Next,  we demonstrate that the fractional $p\text{-}$minimizers, which are  equivalent to  the weak solutions to the Euler-Lagrange equation corresponding to \eqref{F function}, satisfy the following nonlocal Caccioppoli-type inequalities.

\begin{theorem}[Caccioppoli estimates with tail]\label{theo_Caccioppoli estimates with tail}
    Let $p\in (1,+\infty)$, $s\in(0,1)$ and let $u\in W^{s,p}\left( {{\mathbb{G}}^{n}} \right)$ be a weak solution to \eqref{integro-differential problem}. Then  the following estimate
    \begin{equation}\label{Caccioppoli estimates with tail}
        \begin{aligned}
            &\int_{B_{r}}\int_{B_{r}} {\left| {d_{C}(x,y)} \right|^{-Q-sp}\left| {w_{\pm}(x)\phi(x)-w_{\pm}(y)\phi(y)} \right|^{p}}\mathrm{d}x\mathrm{d}y   \\
            \le &c\int_{B_{r}}\int_{B_{r}} {\left| {d_{C}(x,y)} \right|^{-Q-sp}{\left( {\mathrm{max}\left\{ {w_{\pm}(x),w_{\pm}(y)} \right\}} \right)}^{p}\left| {\phi(x)-\phi(y)} \right|^{p}}\mathrm{d}x\mathrm{d}y   \\
            &+c\int_{B_{r}} {w_{\pm}(x)\phi^{p}(x)}\mathrm{d}x\left( {\mathop{\sup}_{y\in \mathrm{{supp}\,\phi}}\int_{{\mathbb{G}}^{n} \backslash B_{r}} {\left| {d_{C}(x,y)} \right|^{-Q-sp}w_{\pm}^{p-1}(x)}\mathrm{d}x}
            \right)
        \end{aligned}
    \end{equation}holds for any $B_{r}\equiv B\left( {x_{0},r} \right) \subset \Omega$
    and any nonnegative $\phi \in C_{0}^{\infty}\left( {B_{r}} \right)$, where $w_{\pm}:=(u-k)_{\pm}$ and $c$ depends only on $p$.
\end{theorem}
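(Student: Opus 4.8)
The plan is to test the weak formulation of \eqref{integro-differential problem} against a cut-off of the truncated solution and then split the resulting double integral into a \emph{diagonal} part on $B_r\times B_r$, which produces the left-hand side and the first right-hand term of \eqref{Caccioppoli estimates with tail}, and a \emph{tail} part carrying the long-range interaction between $B_r$ and $\mathbb{G}^n\setminus B_r$, which produces the last term. It is enough to treat $w_+=(u-k)_+$: since $\mathcal{L}(-u)=-\mathcal{L}u$ by \eqref{nonlocal operator L}, the statement for $w_-$ follows by applying the $w_+$ case to $-u$ at level $-k$. Fix a nonnegative $\phi\in C_0^\infty(B_r)$ and take $\varphi:=w_+\phi^p$ as test function; one has $\varphi\in W^{s,p}(\mathbb{G}^n)$ with $\operatorname{supp}\varphi\Subset\Omega$ by the standard fractional Leibniz/product estimates, so $\varphi$ is admissible. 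Writing the weak formulation and decomposing $\mathbb{G}^n\times\mathbb{G}^n$ into $B_r\times B_r$, the two cross regions $B_r\times(\mathbb{G}^n\setminus B_r)$ and $(\mathbb{G}^n\setminus B_r)\times B_r$, and $(\mathbb{G}^n\setminus B_r)^2$ (on which $\varphi\equiv 0$), and using the symmetry of the kernel and of the integrand, one obtains the identity $\mathcal{I}_{\mathrm{loc}}=-2\,\mathcal{I}_{\mathrm{tail}}$, where
\[
\mathcal{I}_{\mathrm{loc}}:=\int_{B_r}\int_{B_r}\frac{|u(x)-u(y)|^{p-2}(u(x)-u(y))(\varphi(x)-\varphi(y))}{|d_C(x,y)|^{Q+sp}}\,\mathrm{d}x\,\mathrm{d}y
\]
and
\[
\mathcal{I}_{\mathrm{tail}}:=\int_{B_r}\int_{\mathbb{G}^n\setminus B_r}\frac{|u(x)-u(y)|^{p-2}(u(x)-u(y))}{|d_C(x,y)|^{Q+sp}}\,w_+(x)\phi^p(x)\,\mathrm{d}y\,\mathrm{d}x.
\]

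For the diagonal part I would bound $\mathcal{I}_{\mathrm{loc}}$ from below. By symmetry it suffices to argue on $\{(x,y)\in B_r\times B_r:u(x)>u(y)\}$, where the truncation is order preserving, $w_+(x)\ge w_+(y)\ge 0$ and $u(x)-u(y)\ge w_+(x)-w_+(y)$. Distinguishing the sign of $\varphi(x)-\varphi(y)$ — and noting that whenever this difference is negative one in fact has $w_+(y)>0$, hence $u(x),u(y)>k$ and $u(x)-u(y)=w_+(x)-w_+(y)$ exactly — one checks the pointwise lower bound
\[
|u(x)-u(y)|^{p-2}(u(x)-u(y))(\varphi(x)-\varphi(y))\ \ge\ (w_+(x)-w_+(y))^{p-1}\big(\phi^p(x)w_+(x)-\phi^p(y)w_+(y)\big).
\]
The key ingredient is then the elementary algebraic inequality used in the Euclidean and Heisenberg settings (cf.\ \cite{di2016local,manfredini2023holder}): there is $c=c(p)>0$ such that for all $a\ge b\ge 0$ and $\tau_1,\tau_2\ge 0$,
\[
(a-b)^{p-1}\big(\tau_1^p a-\tau_2^p b\big)\ \ge\ \frac1c\,|\tau_1 a-\tau_2 b|^p-c\,\big(\max\{a,b\}\big)^p|\tau_1-\tau_2|^p.
\]
Applying it pointwise with $(a,b)=(w_+(x),w_+(y))$ and $(\tau_1,\tau_2)=(\phi(x),\phi(y))$, multiplying by $|d_C(x,y)|^{-Q-sp}$, integrating over $\{u(x)>u(y)\}$ and doubling by symmetry gives
\[
\mathcal{I}_{\mathrm{loc}}\ \ge\ \frac1c\int_{B_r}\int_{B_r}\frac{|w_+(x)\phi(x)-w_+(y)\phi(y)|^p}{|d_C(x,y)|^{Q+sp}}\,\mathrm{d}x\,\mathrm{d}y-c\int_{B_r}\int_{B_r}\frac{(\max\{w_+(x),w_+(y)\})^p|\phi(x)-\phi(y)|^p}{|d_C(x,y)|^{Q+sp}}\,\mathrm{d}x\,\mathrm{d}y.
\]

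For the tail part I would bound $\mathcal{I}_{\mathrm{tail}}$ from below. Since $w_+(x)\phi^p(x)\ge 0$, the integrand is nonnegative wherever $u(x)\ge u(y)$; on the complementary set, if $w_+(x)>0$ then $u(x)>k$, hence $u(y)>u(x)>k$ and $0<u(y)-u(x)=w_+(y)-w_+(x)\le w_+(y)$, so in all cases $|u(x)-u(y)|^{p-2}(u(x)-u(y))\,w_+(x)\ge -w_+^{p-1}(y)\,w_+(x)$. Consequently
\[
\mathcal{I}_{\mathrm{tail}}\ \ge\ -\bigg(\sup_{x\in\operatorname{supp}\phi}\int_{\mathbb{G}^n\setminus B_r}\frac{w_+^{p-1}(y)}{|d_C(x,y)|^{Q+sp}}\,\mathrm{d}y\bigg)\int_{B_r}w_+(x)\phi^p(x)\,\mathrm{d}x,
\]
which is exactly $-1$ times the last term of \eqref{Caccioppoli estimates with tail}; this is precisely the quantity that the nonlocal tail \eqref{nonlocal tail} is tailored to control once $\phi$ is restricted to a sub-ball. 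Inserting the two lower bounds into $\mathcal{I}_{\mathrm{loc}}=-2\,\mathcal{I}_{\mathrm{tail}}$ and rearranging yields \eqref{Caccioppoli estimates with tail} with $c=c(p)$. The main obstacle is the diagonal step: the algebraic inequality above is the true analytic content of the estimate and needs a careful convexity argument and case analysis to obtain a constant depending only on $p$, together with the bookkeeping that replaces the $u$-integrand by the $w_+$-integrand. By contrast the Grushin geometry is essentially inert here — the kernel $|d_C(x,y)|^{-Q-sp}$ enters only through its symmetry and positivity and $\mathrm{d}x$ is the underlying (Lebesgue) measure — so once the algebraic lemma and the admissibility of $\varphi=w_+\phi^p$ are in place the computation proceeds as in the Euclidean case \cite{di2016local}.
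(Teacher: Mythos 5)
Your proposal is correct and follows essentially the same route as the paper: test with $w_+\phi^p$, split into the diagonal $B_r\times B_r$ part and the doubled cross term, reduce the diagonal integrand to the truncated differences by the same case analysis, and apply the standard algebraic inequality to produce $|w_+\phi(x)-w_+\phi(y)|^p$ plus the $(\max w_+)^p|\phi(x)-\phi(y)|^p$ error. The only difference is that you cite that algebraic inequality from \cite{di2016local} as a black box, whereas the paper derives it from the $\Gamma$-type Lemma \ref{lemma_Gamma small inequality} with an explicit choice of $\epsilon$; since you state the inequality correctly and identify it as the crux, this is not a gap.
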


In the proof of the H{\"o}lder continuity, the following logarithmic estimate plays a pivotal role.

\begin{theorem}[Logarithmic Lemma]\label{lemma_Logarithmic Lemma}
    Let $p\in (\max\left\{ {m(n-h),1} \right\},+\infty)$, $s\in \left( {0,1-\frac{m(n-h)}{p}} \right)$ and let $u\in W^{s,p}\left( {{\mathbb{G}}^{n}} \right)$ be a weak supersolution to \eqref{integro-differential problem} such that $u\ge 0$ in $B_{R}\equiv B\left( {x_{0},R} \right) \subset \Omega$. Then the following estimate holds for any $B_{r}\equiv B\left( {x_{0},r} \right) \subset B\left( {x_{0},R/2} \right)$ and any $d>0$,
    \begin{equation}\label{Logarithmic Lemma}
        \begin{aligned}
            &\int_{B_{r}}\int_{B_{r}} {\left| {d_{C}(x,y)} \right|^{-Q-sp}\left| {\mathrm{log}\left( {\frac{u(x)+d}{u(y)+d}} \right)}
                \right|^{p}}\mathrm{d}x\mathrm{d}y   \\
            \le &cr^{2n-Q-sp}\left( {\left| {x_0} \right|_h+2r} \right)^{m(n-h)}   \\
            &\times\left\{ {d^{1-p}{\left( {\frac{r}{R}} \right)}^{sp}r^{Q-n}\left[ {\mathrm{Tail}\left( {u_{-};x_{0},R}
                    \right)} \right]^{p-1}+\left( {\left| {x_0} \right|_h+2r} \right)^{m(n-h)}} \right\}
        \end{aligned}
    \end{equation}
    where $u_{-}:=\mathrm{max}\left\{ {-u,0} \right\}$ and $c$ depends only on $n,h,p,s$ and $m$.
\end{theorem}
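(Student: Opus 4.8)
The strategy is to test the weak formulation of the equation with a suitable power of the shifted function. Write $\bar u = u+d$ and set $v(x) = \bar u(x)^{1-p}\phi^p(x)$, where $\phi \in C_0^\infty(B_{3r/2})$ is a standard cutoff with $\phi \equiv 1$ on $B_r$, $0 \le \phi \le 1$, and $|\nabla_C \phi| \lesssim 1/r$ (so that pointwise $|\phi(x)-\phi(y)| \le c\, d_C(x,y)/r$). Since $u$ is a supersolution and $u \ge 0$ in $B_R$, the quantity $v$ is an admissible nonnegative test function supported in $B_R$, and we obtain
\[
0 \le \int_{\mathbb{G}^n}\int_{\mathbb{G}^n} |d_C(x,y)|^{-Q-sp}\,|u(x)-u(y)|^{p-2}(u(x)-u(y))\bigl(v(x)-v(y)\bigr)\,\mathrm{d}x\,\mathrm{d}y.
\]
Split the domain of integration into the local part $B_{3r/2}\times B_{3r/2}$ and the two symmetric tail parts $(\mathbb{G}^n\setminus B_{3r/2})\times B_{3r/2}$. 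The plan is to show the local part dominates, from below, a multiple of the left-hand side of \eqref{Logarithmic Lemma}, while the tail parts are controlled from above by $\mathrm{Tail}(u_-;x_0,R)$.

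For the local part, I would use the algebraic inequality (valid for $a,b>0$ and exponents as above, cf.\ the analogous lemma in \cite{di2016local} and \cite{manfredini2023holder}): for $\bar a=a+d$, $\bar b = b+d$ and $\tau_1,\tau_2 \in [0,1]$,
\[
|a-b|^{p-2}(a-b)\bigl(\bar a^{1-p}\tau_1^p - \bar b^{1-p}\tau_2^p\bigr) \le -c_1\Bigl|\log\tfrac{\bar a}{\bar b}\Bigr|^p \max\{\tau_1,\tau_2\}^p \;+\; c_2\,|\tau_1-\tau_2|^p,
\]
which after multiplying by the kernel and integrating, together with $|\phi(x)-\phi(y)|\le c\,d_C(x,y)/r$ and the fact that $\int_{B_{3r/2}}\int_{B_{3r/2}} |d_C(x,y)|^{-Q-sp}d_C(x,y)^p\,\mathrm{d}x\,\mathrm{d}y \lesssim r^{2n-Q-sp}(|x_0|_h+2r)^{m(n-h)}$ by the ball–measure estimates for the Grushin metric (this last volume bound is exactly where the factor $r^{2n-Q-sp}(|x_0|_h+2r)^{m(n-h)}$ comes from, and it reflects the anisotropy encoded in $Q=n+m(n-h)$), yields
\[
c_1\int_{B_r}\int_{B_r} |d_C(x,y)|^{-Q-sp}\Bigl|\log\tfrac{\bar u(x)}{\bar u(y)}\Bigr|^p\,\mathrm{d}x\,\mathrm{d}y \le c\,r^{2n-Q-sp}(|x_0|_h+2r)^{2m(n-h)} + (\text{tail terms}).
\]

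For the tail terms, on $(\mathbb{G}^n\setminus B_{3r/2})\times B_{3r/2}$ one has $v(y)=\bar u(y)^{1-p}\phi^p(y)$ with $y\in B_{3r/2}$, $v(x)=0$, and $|u(x)-u(y)|^{p-2}(u(x)-u(y))(-v(y)) \le \bar u(y)^{1-p}\phi^p(y)(u(y)-u(x))_+^{p-1} \le d^{1-p}\phi^p(y)\bigl(u_-(x)+ \text{bounded}\bigr)^{p-1}$, using $\bar u(y)\ge d$ and $u(y)\ge 0$. Integrating in $x$ over $\mathbb{G}^n\setminus B_{3r/2}$ against $|d_C(x,y)|^{-Q-sp}$ and comparing $d_C(x,y)$ with $d_C(x,x_0)$ for $y$ in the support of $\phi$ (a routine consequence of the quasi-triangle inequality and $B_r\subset B_{R/2}$), one recognizes a constant times $R^{-sp}[\mathrm{Tail}(u_-;x_0,R)]^{p-1}$ up to lower-order contributions; multiplying by $\int_{B_{3r/2}}\phi^p\,\mathrm{d}y \lesssim r^n(|x_0|_h+2r)^{m(n-h)}$ and rescaling produces precisely the first term inside the braces in \eqref{Logarithmic Lemma}. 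Collecting the two estimates and dividing through gives the claim.

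The main obstacle I anticipate is twofold: first, establishing the sharp Grushin ball–volume and kernel-integral bounds with the correct dependence on $|x_0|_h + 2r$ (the metric is non-translation-invariant, so the constant genuinely depends on the base point through $|x_0|_h$, and one must track this carefully rather than absorbing it into $c$); and second, verifying the pointwise algebraic inequality displayed above in the full range $p>\max\{m(n-h),1\}$, which in the Euclidean/Heisenberg setting is handled by a somewhat delicate case analysis (distinguishing $\bar a$ comparable to $\bar b$ from the far regime) — I would adapt that argument verbatim since it is purely one-dimensional and insensitive to the underlying geometry. Everything else (admissibility of the test function, splitting the integral, comparing distances in the tail) is routine once the preliminary estimates of Section \ref{section_Preliminaries} are in hand.
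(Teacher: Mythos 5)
Your proposal follows essentially the same route as the paper: test the supersolution inequality with $(u+d)^{1-p}\phi^p$, extract the logarithm from the local part via the pointwise algebraic inequality (which the paper derives from its Lemma \ref{lemma_Gamma small inequality} by a case analysis on $t=(u(y)+d)/(u(x)+d)$), and control the far part by the tail of $u_-$, with the Grushin ball-volume estimate of Lemma \ref{lemma_measure of ball in Grushin spaces} supplying the $(|x_0|_h+2r)^{m(n-h)}$ factors. Two small points to tidy: your stated bound for $\iint_{B_{3r/2}\times B_{3r/2}}|d_C(x,y)|^{-Q-sp}d_C(x,y)^p\,\mathrm{d}x\,\mathrm{d}y$ should read $r^{2n-Q-sp+p}(|x_0|_h+2r)^{2m(n-h)}$ (the double integral picks up the ball measure twice, and the convergence of the dyadic sum is exactly where $p>m(n-h)$ and $s<1-m(n-h)/p$ enter), and the Lipschitz bound $|\phi(x)-\phi(y)|\le c\,d_C(x,y)/r$ is not entirely free in the Grushin metric — the paper justifies it via a chaining argument with the Sobolev--Poincar\'e inequality.
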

\begin{remark}
    The estimate in \eqref{Caccioppoli estimates with tail} remains valid for $w_{+}$ when $u$ is a weak subsolution to \eqref{integro-differential problem}, and for $w_{-}$ when $u$ is a weak supersolution to \eqref{integro-differential problem}.
\end{remark}
\begin{remark}
    As shown in \eqref{Grushin-type spaces} in Section \ref{section_Preliminaries},  the Grushin-type space reduces
     to the general Euclidean space when $m=0$ or $n=h$. Similarly, the results of Theorem \ref{theo_local boundedness},
      Theorem \ref{theo_holder continuity} and Theorem \ref{lemma_Logarithmic Lemma} coincide with those presented in
      \cite{di2016local} when $m=0$ or $n=h$. Compared with the corresponding results in the Euclidean space in
      \cite{di2016local} and the Heisenberg groiup in \cite{manfredini2023holder}, an additional
      term $\left( {\left| {x_0} \right|_h+2r} \right)^{m(n-h)}$  arises  due to the fact that the measure
      of a ball in the Grusin-type space is related to its center coordinates, which is the most significant
       difference between our results and those of Euclidean space and Heisenberg group.
\end{remark}
The structure of the paper is as follows. In Section \ref{section_Preliminaries}, we introduce notation and provide some preliminary results. Section \ref{section_Fundamental estimates} focuses on proving the Caccioppoli estimates with tail in Theorem \ref{theo_Caccioppoli estimates with tail} and the Logarithmic Lemma in Theorem \ref{lemma_Logarithmic Lemma}. In Section \ref{section_Proof of the Local Boundedness}, we establish the local boundedness described in Theorem \ref{theo_local boundedness}. Finally, in Section \ref{section_Proof of the Hölder continuity}, we conclude by proving the H{\"o}lder continuity given by Theorem \ref{theo_holder continuity}.


\end{section}
\begin{section}{Preliminaries}\label{section_Preliminaries}
In this section, we outline the general assumptions of  the problem addressed in the present paper  and present notation, definitions, along with basic preliminary results that will be used throughout the subsequent sections.

\begin{subsection}{Grushin-type  spaces and notation}\label{subsection_Grushin-type Spaces and Notation}
We begin by providing some basic facts for the Grushin-type spaces. For more details on this space, see e.g. \cite{bieske2006P}.

Let ${\mathbb{R}}^{n}={\mathbb{R}}^{h}\times {\mathbb{R}}^{n-h}$, where $n\ge h\ge1$ are integers. Given a fixed nonnegative integer $m $ and $x=\left( {x_1,\dots,x_h,x_{h+1},\dots,x_n} \right)\in\mathbb{R}^n$, we consider the following vector fields
\begin{equation}\label{Grushin-type spaces}
    \left \{
    \begin{aligned}
        &X_{i}=\frac{\partial}{\partial x_{1}} &&\forall\,i\in\left\{ {1,2,\dots,h} \right\},   \\
        &X_{h+j}=\left| {x} \right|^m_h\frac{\partial}{\partial x_{i}} &&\forall\,j\in\left\{ {1,2,\dots,n-h} \right\},  \\
        &\left| x \right|_h=\left( {\sum_{i=1}^{h}x_i^2} \right)^{\frac{1}{2}},   \\
        &Q=n+m(n-h),\quad 2n>Q.
    \end{aligned}
    \right.
\end{equation}
The Euclidean space ${\mathbb{R}}^{n}$ endowed with the Carnot-Carath\'eodory distance $d_{C}$ associated to the above vector fields is called the Grushin-type space ${\mathbb{G}}^{n}$, where the Carnot-Carath\'eodory distance is defined for the points $p$ and $q$ as follows:
\begin{equation}\label{Carnot-Caratheodory distance}
    d_{C}( {p,q} )=\mathop{\inf}_{\Gamma}\int_{0}^{1}{\left\| {\gamma'( {t} )} \right\| dt}.
\end{equation}
Here $\Gamma$ is the set of all curves $\gamma$ such that $\gamma (0)=p$, $\gamma (1)=q$ and
\begin{equation*}
    \gamma'(t) \in \mathrm{span}\left\{ {\left\{ {X_{i}(\gamma(t))} \right\}^n_{i=1}} \right\}.
\end{equation*}
Via this metric, we can define a Carnot-Carath\'eodory ball of radius $r$ centered at a point $x_{0}$ by
\begin{equation*}
    B\left( {{x_{0},r}} \right)=\left\{ {{x\in \mathbb{G}^{n}:d_{C}( {x,x_{0})<r} )}} \right\}
\end{equation*}
and similarly, we shall denote an  open domain in $\mathbb{G}^{n}$ by $\Omega$. Given a smooth function $f$ on $\mathbb{G}^{n}$, we define the horizontal gradient of $f$ as
\begin{equation*}
    \nabla _{\mathbb{G}^{n}} f(x)=\left( {X_{1}f(x),X_{2}f(x),\cdots,X_{n}f(x)} \right).
\end{equation*}
Similarly to the Euclidean space, the Sobolev space $W^{1,p}\left( {\mathbb{G}^{n}} \right)$ for the Grushin-type $\mathbb{G}^{n}$ with $1\le p<\infty $ is defined as the set of all functions $f\in L^p$ such that $X_{i}f\in L^p$, $i=1,\cdots,n$. The norm of $f\in W^{1,p}\left( {\mathbb{G}^{n}} \right)$ is defined by
\begin{equation*}
    {\left\| {f} \right\|}_{W^{1,p}\left( {\mathbb{G}^{n}} \right)}=\sum_{i=1}^{n} {{\left\| {X_{i}f} \right\|}_{p}}+{\left\| {f} \right\|}_{p}.
\end{equation*}

The following lemma  concerning the measure of the ball $B\left( {{x_{0},r}} \right)$ in Grushin-type space plays  a pivotal role in the subsequent proofs. For a detailed proof, refer to \cite{liu2018functional}.
\begin{lemma}\label{lemma_measure of ball in Grushin spaces}
    For the ball $B\left( {{x_{0},r}} \right) \subseteq \mathbb{G}^{n}$, we have
    \begin{equation*}
        \left| {B\left( {{x_{0},r}} \right)} \right| \sim r^{n}\left( \left| {x_{0}} \right|_h+r \right)^{m(n-h)}.
    \end{equation*}
\end{lemma}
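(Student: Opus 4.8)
The plan is to deduce the two-sided volume bound from a \emph{ball--box inclusion} for the Carnot--Carath\'eodory metric. We may assume $m\ge 1$ and $n>h$, since for $m=0$ or $n=h$ one has $\mathbb{G}^n=\mathbb{R}^n$ with the Euclidean distance and the assertion is immediate. First I would introduce, for a base point $x_0$ and parameters $a,b>0$, the Euclidean cuboid $\mathrm{Box}(x_0;a,b)$ centred at $x_0$ with half-side $a$ in each of the first $h$ coordinate directions and half-side $b$ in each of the last $n-h$ directions; its Lebesgue measure is $(2a)^h(2b)^{n-h}$. The goal is then to prove that there are constants $0<c_1\le 1\le c_2$, depending only on $n,h,m$, with
\[
\mathrm{Box}\bigl(x_0;c_1 r,\,c_1 r(|x_0|_h+r)^m\bigr)\ \subseteq\ B(x_0,r)\ \subseteq\ \mathrm{Box}\bigl(x_0;c_2 r,\,c_2 r(|x_0|_h+r)^m\bigr),
\]
after which measuring the three sets and using $a\sim r$, $b\sim r(|x_0|_h+r)^m$ gives $|B(x_0,r)|\sim r^h\bigl(r(|x_0|_h+r)^m\bigr)^{n-h}=r^n(|x_0|_h+r)^{m(n-h)}$, as desired.

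For the upper inclusion (which bounds the ball from outside) I would take $x\in B(x_0,r)$ and a horizontal curve $\gamma:[0,1]\to\mathbb{G}^n$ with $\gamma(0)=x_0$, $\gamma(1)=x$ and length $\int_0^1\|\gamma'\|\,dt<r$, where $\|\cdot\|$ is the norm making $X_1,\dots,X_n$ orthonormal; writing $\gamma'=\sum_{i\le h}a_iX_i+\sum_{j\le n-h}b_jX_{h+j}$ one has $\gamma_i'=a_i$ for $i\le h$, $\gamma_{h+j}'=b_j|\gamma|_h^m$ for $j\le n-h$ and $\|\gamma'\|^2=\sum a_i^2+\sum b_j^2$. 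The key observation is that $\bigl|\tfrac{d}{dt}|\gamma(t)|_h\bigr|\le\bigl(\sum_{i\le h}|\gamma_i'|^2\bigr)^{1/2}\le\|\gamma'(t)\|$, whence $|\gamma(t)|_h\le|x_0|_h+r$ for every $t$; feeding this into the displacement integrals $|x_i-(x_0)_i|\le\int_0^1|a_i|\,dt< r$ and $|x_{h+j}-(x_0)_{h+j}|\le\int_0^1|b_j|\,|\gamma|_h^m\,dt\le(|x_0|_h+r)^m\int_0^1|b_j|\,dt< r(|x_0|_h+r)^m$ shows $x\in\mathrm{Box}\bigl(x_0;r,r(|x_0|_h+r)^m\bigr)$, i.e. the upper inclusion with $c_2=1$.

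For the lower inclusion (the crux), given a target $x$ inside the small box I would construct an explicit horizontal path $x_0\to x$ of length $<r$ by moving blocks of coordinates one at a time. If $|x_0|_h\gtrsim r$, one runs a Euclidean segment in the first $h$ coordinates (horizontal, since $X_1,\dots,X_h=\partial_{x_1},\dots,\partial_{x_h}$) to their target values, then moves the last $n-h$ coordinates one by one while $|\gamma|_h\sim|x_0|_h+r$ stays frozen, so that the $j$-th such move costs $\lesssim c_1 r(|x_0|_h+r)^m/|x_0|_h^m\lesssim c_1 r$. If $|x_0|_h\lesssim r$, the fields $X_{h+j}$ nearly vanish at the start, so one must first spend length $\lesssim r$ pushing the first $h$ coordinates out to an auxiliary point $x^{\ast}$ with $|x^{\ast}|_h\sim r$, then move the last $n-h$ coordinates at cost $\lesssim c_1 r(|x_0|_h+r)^m/r^m\lesssim c_1 r$ apiece, and finally return the first $h$ coordinates to their target values, again at cost $\lesssim r$. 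Summing the finitely many (dimension-dependent) pieces gives total length $\le C(n,h,m)\,c_1 r<r$ once $c_1$ is chosen small, so $x\in B(x_0,r)$.

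The main obstacle is precisely this last regime $|x_0|_h\lesssim r$: near the singular locus $\{|x|_h=0\}$ the horizontal distribution collapses, so one cannot move in the last $n-h$ directions directly and is forced to ``escape'' by an excursion in the first $h$ coordinates; keeping all these excursion lengths comparable to $r$ and matching them against the box half-widths $c_1 r$ and $c_1 r(|x_0|_h+r)^m$ (using $(|x_0|_h+cr)^m\sim(|x_0|_h+r)^m$ for $c$ bounded below to absorb the mismatch between the radius used in the construction and $r$) is the only point requiring genuine care, and it is routine. Everything else — the measure computation, the upper inclusion, and the regime $|x_0|_h\gtrsim r$ — is straightforward; cf. \cite{liu2018functional}.
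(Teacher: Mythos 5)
The paper itself does not prove this lemma: it simply defers to \cite{liu2018functional}. Your ball--box argument is the standard self-contained route to this volume estimate, and it is essentially correct. The upper inclusion is clean: the Lipschitz bound $\bigl|\tfrac{d}{dt}|\gamma(t)|_h\bigr|\le\bigl(\sum_{i\le h}|\gamma_i'|^2\bigr)^{1/2}\le\|\gamma'(t)\|$ gives $|\gamma(t)|_h\le|x_0|_h+r$ along any competitor curve, and the coordinate displacement bounds follow. The lower inclusion via explicit concatenated horizontal paths, split according to whether $|x_0|_h\gtrsim r$ or not, is also the right idea, and the final volume computation $(2a)^h(2b)^{n-h}\sim r^n(|x_0|_h+r)^{m(n-h)}$ is correct.

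One piece of bookkeeping in the degenerate regime $|x_0|_h\lesssim r$ does not close as literally written. There you correctly say the escape excursion and the return leg each cost a length $\lesssim r$ that is \emph{independent of} $c_1$, yet you then conclude that the total length is $\le C(n,h,m)\,c_1 r$; shrinking $c_1$ cannot make those two legs small. The fix is a two-parameter choice: push out only to $|x^{\ast}|_h=\epsilon r$, so that excursion plus return cost at most $C(h)\,\epsilon r$, while each move in the last $n-h$ coordinates costs at most $c_1(2/\epsilon)^m r$. First fix $\epsilon=\epsilon(n,h)$ so small that the excursion legs total less than $r/2$, and only then choose $c_1=c_1(n,h,m,\epsilon)$ so that the remaining $(n-h)$ vertical moves and the horizontal adjustments total less than $r/2$. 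With that ordering of the constants the construction yields a path of length $<r$ and the inclusion $\mathrm{Box}(x_0;c_1r,c_1r(|x_0|_h+r)^m)\subseteq B(x_0,r)$ holds; the rest of your argument then goes through.
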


Before proceeding with the proofs, it is  necessary to establish some notation that will be used throughout the rest of the paper. First, we follow the standard convention of denoting by $c$ a general positive constant, which may vary across occurrences and may change from line to line. For clarity, the dependencies of these constants will often be omitted in chains of estimates and will be stated after the estimate. Throughout this paper, all notations will be listed as needed.
\begin{itemize}
    \item We denote by $B\left( {{x_{0},R}} \right)=\left\{ {{x\in \mathbb{G}^{n}:d_{C}( {x,x_{0})<R} )}} \right\}$ the open ball centered in $x_{0}\in \mathbb{G}^{n}$ with radius $R>0$. When the full expression is not essential and is evident from the context, we will adopt the shorter notation $B_{R}=B\left( {{x_{0},R}} \right)$. We denote by $\beta B_{R}$ the concentric ball scaled by a factor $\beta >0$, that is $\beta B_{R}:=B\left( {x_0,\beta R} \right)$.
    \item If $f\in L^{1}(S)$ and the measure $\left| S \right|$ of the set $S\subseteq \mathbb{G}^{n}$ is finite and strictly positive, we write
    \begin{equation*}
        (f)_{S}:=\dashint_{S} {f(x)}\mathrm{d}x=\frac{1}{\left| S \right|}\int_{S} {f(x)}\mathrm{d}x.
    \end{equation*}
    \item Let $k\in \mathbb{R}^{n}$,   denote by
    \begin{equation*}
        w_{+}(x):=\left( {u(x)-k} \right)_{+}=\max\left\{ {u(x)-k,0} \right\},
    \end{equation*}
    and
    \begin{equation*}
        w_{-}(x):=\left( {u(x)-k} \right)_{-}=\left( {k-u(x)} \right)_{+}.
    \end{equation*}
    \item The symbol $\thicksim$ between two positive expressions $u,v$ means that their ratio $\frac{u}{v}$ is bounded from above and below by positive constants.
\end{itemize}
\end{subsection}

\begin{subsection}{Fractional Sobolev space on the Grushin-type space}\label{subsection_The Setting of the Main Problem}
First, we need to give some definitions and basic results regarding our fractional functional setting. Let $p\ge 1$ and $s\in (0,1)$, and let $u:\mathbb{G}^{n}\to \mathbb{R}$ be a measurable function, the Gagliardo seminorm of $u$ can be defined as follows,
\begin{equation*}
    [u]_{W^{s,p}\left( {\mathbb{G}^n} \right)}=\left( {\int_{\mathbb{G}^{n}}\int_{\mathbb{G}^{n}} {\frac{\left| {u(x)-u(y)} \right|^p}{\left| {d_{C}(x,y)} \right|^{Q+sp}}}\mathrm{d}x\mathrm{d}y} \right)^{\frac{1}{p}}.
\end{equation*}
The fractional Sobolev spaces $W^{s,p}\left( {\mathbb{G}^n} \right)$ on the Grushin-type space are defined as
\begin{equation*}
    W^{s,p}\left( {\mathbb{G}^n} \right):=\left\{ {u\in L^{p}\left( {\mathbb{G}^n} \right):[u]_{W^{s,p}\left( {\mathbb{G}^n} \right)}<\infty} \right\},
\end{equation*}
which were endowed with the natural fractional norm
\begin{equation*}
    \left\| u \right\|_{W^{s,p}\left( {\mathbb{G}^n} \right)}:=\left( {\left\| u \right\|^p_{ L^{p}\left( {\mathbb{G}^n} \right)} +[u]^p_{W^{s,p}\left( {\mathbb{G}^n} \right)}} \right)^{\frac{1}{p}} \quad \forall u\in W^{s,p}\left( {\mathbb{G}^n} \right).
\end{equation*}
Similarly, given a domain $\Omega \subset \mathbb{G}^n$, one can define the fractional Sobolev space $W^{s,p}\left( {\Omega} \right)$ in the natural way, as follows
\begin{equation*}
    W^{s,p}\left( {\Omega} \right):=\left\{ {u\in L^{p}\left( {\Omega} \right):\left( {\int_{\Omega}\int_{\Omega} {\frac{\left| {u(x)-u(y)} \right|^p}{\left| {d_{C}(x,y)} \right|^{Q+sp}}}\mathrm{d}x\mathrm{d}y} \right)^{\frac{1}{p}}<\infty} \right\}
\end{equation*}
endowed with the norm
\begin{equation*}
    \left\| u \right\|_{W^{s,p}\left( {\Omega} \right)}:=\left( {\left\| u \right\|^p_{ L^{p}\left( {\Omega} \right)}+\int_{\Omega}\int_{\Omega} {\frac{\left| {u(x)-u(y)} \right|^p}{\left| {d_{C}(x,y)} \right|^{Q+sp}}}\mathrm{d}x\mathrm{d}y} \right)^{\frac{1}{p}}.
\end{equation*}
Furthermore, by saying that $v$ belongs to $W^{s,p}_{0}(\Omega)$, we mean that $v\in W^{s,p}\left( {\mathbb{G}^n} \right)$ and $v=0$ almosteverywhere in $\mathbb{G}^{n} \backslash \Omega$.

As outlined in the introduction,  the nonlocal tail of a function $v$ has been given in (\ref{nonlocal tail}) in the ball $B\left( {x_0,R} \right)$, which will play an important role in the rest of the paper.

We conclude this subsection by presenting the definition of the weak solution to a class of fractional problems under consideration. Let $\Omega$ be a bounded open set in $\mathbb{G}^n$ and $g\in W^{s,p}\left( {\mathbb{G}^n} \right)$, we focus  on weak solutions to the following integro-differential problems
\begin{equation}\label{integro-differential problem in sec2}
    \left \{
    \begin{aligned}
        &\mathcal{L}u=0&&\mathrm{in}\,\,\Omega,   \\
        &u=g &&\mathrm{in}\,\,{\mathbb{G}}^{n}\backslash\Omega,
    \end{aligned}
    \right.
\end{equation}
where the operator $\mathcal{L}$ is formally defined in \eqref{nonlocal operator L}. It should be noted that the boundary condition is defined over the entire complement of $\Omega$, as is customary when dealing with such nonlocal operators.

Now, let us consider the following functional
\begin{equation}\label{F function in sec2}
    \mathcal{F}(v)=\int_{{\mathbb{G}}^{n}} \int_{{\mathbb{G}}^{n}} {{\left| {d_{C}(x,y)} \right|}^{-Q-sp}{\left| {v(x)-v(y)} \right|}^{p}}\mathrm{d}x\mathrm{d}y
\end{equation}in $W^{s,p}\left( {\mathbb{G}^n} \right)$.
Theorem \ref{theo_Equivalence between weak solutions and the minimizers} below establishes that a unique $p\text{-}$minimizer of $\mathcal{F}$ exists for all $u\in W^{s,p}\left( {\mathbb{G}^n} \right)$ such that $u(x)=g(x)$ for $x\in{\mathbb{G}}^{n}\backslash\Omega$. Furthermore, a $p\text{-}$minimizer $u$ is a weak solution to problem \eqref{integro-differential problem in sec2} and vice versa. In order to specify the relevant spaces, for given $g\in W^{s,p}\left( {\mathbb{G}^n} \right)$, we define the convex sets of $W^{s,p}\left( {\mathbb{G}^n} \right)$ by
\begin{equation*}
    \mathcal{K}_{g}^{\pm}(\Omega):=\left\{ {v\in W^{s,p}\left( {\mathbb{G}^n} \right):(g-v)_{\pm}\in W^{s,p}_{0}\left( {\Omega} \right)} \right\}
\end{equation*}
and
\begin{equation*}
    \mathcal{K}_{g}(\Omega):=\mathcal{K}_{g}^{+}(\Omega)\cap \mathcal{K}_{g}^{-}(\Omega)=\left\{ {v\in W^{s,p}\left( {\mathbb{G}^n} \right):v-g\in W^{s,p}_{0}\left( {\Omega} \right)} \right\}.
\end{equation*}
Recall that the functions in the space $W^{s,p}_{0}\left( {\Omega} \right)$ are defined in the whole space, as they are extended to zero outside $\Omega$. Now we introduce the definition of weak subsolutions and supersolutions as well as weak solutions to problem \eqref{integro-differential problem in sec2} as follows.
\begin{definition}\label{def_weak solutions}
    Let $g\in W^{s,p}\left( {\mathbb{G}^n} \right)$. A function $u\in \mathcal{K}_{g}^{-}\left( {\mathcal{K}_{g}^{+}} \right)$ is a weak subsolution (supersolution) to \eqref{integro-differential problem in sec2} if
    \begin{equation}\label{test of sub(super)solution}
        \int_{\mathbb{G}^n}\int_{\mathbb{G}^n} {\left| {d_{C}(x,y)}\right|^{-Q-sp}\left| {u(x)-u(y)} \right|^{p-2}\left( {u(x)-u(y)} \right)\left( {\eta(x)-\eta(y)} \right)}\mathrm{d}x\mathrm{d}y\le(\ge)0
    \end{equation}
    for every nonnegative $\eta \in W^{s,p}_{0}(\Omega)$. A function $u$ is a weak solution to problem \eqref{integro-differential problem in sec2} if it is both weak sub- and supersolution. In particular, $u$ belongs to $\mathcal{K}_{g}(\Omega)$ and satisfies
    \begin{equation*}
        \int_{\mathbb{G}^n}\int_{\mathbb{G}^n} {\left| {d_{C}(x,y)}\right|^{-Q-sp}\left| {u(x)-u(y)} \right|^{p-2}\left( {u(x)-u(y)} \right)\left( {\eta(x)-\eta(y)} \right)}\mathrm{d}x\mathrm{d}y=0
    \end{equation*}
    for every $\eta \in W^{s,p}_{0}(\Omega)$.
\end{definition}

Similarly, we give the definition of sub- and superminimizers of
\eqref{F function in sec2} as follows.
\begin{definition}
    Let $g\in W^{s,p}\left( {\mathbb{G}^n} \right)$. A function $u\in \mathcal{K}_{g}^{-}$ is a subminimizer of the functional \eqref{F function in sec2} over $\mathcal{K}_{g}^{-}$ if $\mathcal{F}(u)\le \mathcal{F}(u+\eta)$ for every nonpositive $\eta \in W^{s,p}_{0}(\Omega)$. Similarly, a function $u\in \mathcal{K}_{g}^{+}$ is a superminimizer of the functional \eqref{F function in sec2} over $\mathcal{K}_{g}^{+}$ if $\mathcal{F}(u)\le \mathcal{F}(u+\eta)$ for every nonnegative $\eta \in W^{s,p}_{0}(\Omega)$. Finally, $u\in \mathcal{K}_{g}$ is a minimizer of the functional \eqref{F function in sec2} over $\mathcal{K}_{g}$ if $\mathcal{F}(u)\le \mathcal{F}(u+\eta)$ for every $\eta \in W^{s,p}_{0}(\Omega)$.
\end{definition}
\end{subsection}

\begin{subsection}{Equivalence between weak solutions and the minimizers}\label{subsection_Equivalence between weak solutions and the minimizers}

\begin{theorem}\label{theo_Equivalence between weak solutions and the minimizers}
    Let $s\in (0,1)$, $p\in\left[ 1,+\infty \right)$, and let $g\in W^{s,p}\left( {\mathbb{G}^n} \right)$. Then there exists a minimizer $u$ of \eqref{F function in sec2} over $\mathcal{K}_{g}$. Moreover, if $p>1$, then the solution is unique. Moreover, a function $u\in \mathcal{K}_{g}$ is a minimizer of \eqref{F function in sec2} over $\mathcal{K}_{g}$ if and only if it is a weak solution to problem \eqref{integro-differential problem in sec2}.
\end{theorem}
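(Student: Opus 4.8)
The plan is to prove the three assertions—existence, uniqueness for $p>1$, and the equivalence with weak solutions—in that order, following the classical direct-method / monotonicity scheme adapted to the fractional Gagliardo functional on $\mathbb{G}^n$. For existence, I would first observe that $\mathcal{K}_g$ is nonempty (it contains $g$) and, via the fractional Sobolev structure, that $\mathcal{F}$ is coercive on $\mathcal{K}_g$: writing $v = g + \eta$ with $\eta \in W^{s,p}_0(\Omega)$, a triangle inequality in the Gagliardo seminorm together with the fact that $d_C(x,y)^{-Q-sp}$ is integrable away from the diagonal gives $\mathcal{F}(v) \ge c[\eta]^p_{W^{s,p}(\mathbb{G}^n)} - C$, and a fractional Poincaré-type inequality on the bounded set $\Omega$ (using $\eta \equiv 0$ outside $\Omega$) controls $\|\eta\|_{L^p}$. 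Take a minimizing sequence $u_j = g + \eta_j$; then $(\eta_j)$ is bounded in the reflexive Banach space $W^{s,p}_0(\Omega)$ (for $p>1$; for $p=1$ one argues with the corresponding compactness in $W^{s,1}_0$), so up to a subsequence $\eta_j \rightharpoonup \eta_\infty$ weakly, hence $u_j \rightharpoonup u := g+\eta_\infty$. Since $\mathcal{F}$ is convex (the integrand $t \mapsto |t|^p$ is convex) and strongly continuous, it is weakly lower semicontinuous, so $\mathcal{F}(u) \le \liminf_j \mathcal{F}(u_j)$, and $u \in \mathcal{K}_g$ because $W^{s,p}_0(\Omega)$ is weakly closed; thus $u$ is a minimizer.

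For uniqueness when $p>1$, I would use strict convexity: if $u_1, u_2$ are both minimizers in $\mathcal{K}_g$ with $m := \mathcal{F}(u_1) = \mathcal{F}(u_2) = \inf_{\mathcal{K}_g}\mathcal{F}$, then $(u_1+u_2)/2 \in \mathcal{K}_g$ and, by the strict convexity of $t \mapsto |t|^p$ applied pointwise to the difference quotients $\big(u_i(x)-u_i(y)\big)|d_C(x,y)|^{-(Q+sp)/p}$, one gets $\mathcal{F}((u_1+u_2)/2) < m$ unless $u_1(x)-u_1(y) = u_2(x)-u_2(y)$ for a.e.\ $(x,y)$; the latter forces $u_1 - u_2$ to be a.e.\ constant, and since $u_1 - u_2 \in W^{s,p}_0(\Omega)$ vanishes outside the bounded set $\Omega$, the constant is $0$.

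For the equivalence, the key is the convexity of $\mathcal{F}$, which makes the minimization problem equivalent to a variational inequality, and then to the Euler–Lagrange identity because $\mathcal{K}_g$ is an affine subspace (translate of the linear space $W^{s,p}_0(\Omega)$). Concretely, if $u$ minimizes over $\mathcal{K}_g$ then for every $\eta \in W^{s,p}_0(\Omega)$ the function $t \mapsto \mathcal{F}(u+t\eta)$ has a minimum at $t=0$; I would justify differentiating under the integral sign at $t=0$—this is where one must be slightly careful, using the inequality $\big||a+tb|^p - |a|^p\big| \le C|t|(|a|+|b|)^{p-1}(|b|)$ (or the convexity-based difference quotient bound) together with the integrability of $[u]^p_{W^{s,p}} + [\eta]^p_{W^{s,p}}$ via Hölder with exponents $p/(p-1)$ and $p$ to produce a dominating function—obtaining $\frac{d}{dt}\big|_{t=0}\mathcal{F}(u+t\eta) = p \int\!\!\int |d_C(x,y)|^{-Q-sp}|u(x)-u(y)|^{p-2}(u(x)-u(y))(\eta(x)-\eta(y))\,\mathrm{d}x\mathrm{d}y = 0$, which is exactly \eqref{test of sub(super)solution} with equality, i.e.\ $u$ is a weak solution. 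Conversely, if $u \in \mathcal{K}_g$ is a weak solution, then for any competitor $v = u+\eta \in \mathcal{K}_g$ the convexity inequality $|b|^p \ge |a|^p + p|a|^{p-2}a(b-a)$ applied with $a = (u(x)-u(y))|d_C|^{-(Q+sp)/p}$ and $b = (v(x)-v(y))|d_C|^{-(Q+sp)/p}$, integrated over $\mathbb{G}^n\times\mathbb{G}^n$, gives $\mathcal{F}(v) \ge \mathcal{F}(u) + p\cdot 0 = \mathcal{F}(u)$, so $u$ is a minimizer.

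The main obstacle I anticipate is not any single step but the combination of two technical points: first, securing coercivity and the weak compactness of the minimizing sequence requires a fractional Poincaré inequality on $\Omega \subset \mathbb{G}^n$ with the Carnot–Carathéodory metric and the center-dependent ball measure from Lemma \ref{lemma_measure of ball in Grushin spaces}, which must be invoked or established in this geometry; second, the differentiation under the integral sign in the Euler–Lagrange computation needs a uniform-in-$t$ dominated-convergence argument that genuinely uses $u,\eta \in W^{s,p}(\mathbb{G}^n)$ and the structure of the kernel, and the $p=1$ case of existence is more delicate since $W^{s,1}_0(\Omega)$ is not reflexive—there one either restricts attention to $p>1$ for the parts of the paper that need it, or appeals to a compactness/relaxation argument for the $L^1$-type functional. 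Everything else is a routine transcription of the Euclidean and Heisenberg arguments (cf.\ \cite{di2016local,manfredini2023holder}) to the present setting.
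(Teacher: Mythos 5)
Your proposal is correct and follows essentially the same route as the paper: the paper likewise handles existence and uniqueness by the direct method of the calculus of variations (it simply cites \cite[Theorem 2.3]{di2016local} for the details you spell out), proves the minimizer-to-weak-solution direction by differentiating $t\mapsto\mathcal{F}(u+t\eta)$ at $t=0$, and proves the converse by a convexity-type inequality --- the paper uses Young's inequality where you use the first-order convexity inequality $|b|^p\ge|a|^p+p|a|^{p-2}a(b-a)$, which is an equivalent one-line variant. Your additional care about dominated convergence in the Euler--Lagrange computation and about the non-reflexive case $p=1$ addresses points the paper treats only formally or by citation.
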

\begin{proof}
    We adopt the method of calculus of variations and the arguments in the first part of in \cite[Theorem 2.3]{di2016local} to prove the existence and uniqueness. We omit the details here.

    Moreover, it follows that $u$ solves the corresponding Euler-Lagrange equation by perturbing $u\in \mathcal{K}_{g}$ using standard perturbation techniques. Indeed, supposing that $u\in \mathcal{K}_{g}$ is a minimizer of \eqref{F function in sec2} over $\mathcal{K}_{g}$, take any $\phi \in W^{s,p}_{0}(\Omega)$ and calculate formally
    \begin{equation*}
        \begin{aligned}
            \frac{\mathrm{d}}{\mathrm{d}t}\mathcal{F}\left( {u+t\phi} \right)\bigg|_{t=0}&=\int_{\mathbb{G}^n}\int_{\mathbb{G}^n} {\left| {d_{C}(x,y)} \right|^{-Q-sp}\frac{\mathrm{d}}{\mathrm{d}t}\left| {u(x)-u(y)+t\left( {\phi(x)-\phi(y)} \right)} \right|^p}\mathrm{d}x\mathrm{d}y\Bigg|_{t=0}   \\
            &=\int_{\mathbb{G}^n}\int_{\mathbb{G}^n} {\left| {d_{C}(x,y)} \right|^{-Q-sp}\left| {u(x)-u(y)} \right|^{p-2}\left( {u(x)-u(y)} \right)\left( {\phi(x)-\phi(y)} \right)}\mathrm{d}x\mathrm{d}y   \\
            &=0.
        \end{aligned}
    \end{equation*}
    Since $u$ is a minimizer, the left-hand side vanishes, thus it  holds  that $u\in \mathcal{K}_{g}$ constitutes a weak solution to problem \eqref{integro-differential problem in sec2}. Conversely, let $u\in \mathcal{K}_{g}$ be a weak solution to \eqref{integro-differential problem in sec2} and consider $\phi=u-v\in W^{s,p}_{0}(\Omega)$, where $v\in \mathcal{K}_{g}$. Then, by Young's inequality,
    \begin{equation*}
        \begin{aligned}
            0=&\int_{\mathbb{G}^n}\int_{\mathbb{G}^n} {\left| {d_{C}(x,y)} \right|^{-Q-sp}\left| {u(x)-u(y)} \right|^{p-2}\left( {u(x)-u(y)} \right)\left( {\phi(x)-\phi(y)} \right)}\mathrm{d}x\mathrm{d}y   \\
            =&\int_{\mathbb{G}^n}\int_{\mathbb{G}^n} {\left| {d_{C}(x,y)} \right|^{-Q-sp}\left| {u(x)-u(y)} \right|^{p}}\mathrm{d}x\mathrm{d}y   \\
            &-\int_{\mathbb{G}^n}\int_{\mathbb{G}^n} {\left| {d_{C}(x,y)} \right|^{-Q-sp}\left| {u(x)-u(y)} \right|^{p-2}\left( {u(x)-u(y)} \right)\left( {v(x)-v(y)} \right)}\mathrm{d}x\mathrm{d}y   \\
            \ge&\int_{\mathbb{G}^n}\int_{\mathbb{G}^n} {\left| {d_{C}(x,y)} \right|^{-Q-sp}\left| {u(x)-u(y)} \right|^{p}}\mathrm{d}x\mathrm{d}y   \\
            &-\int_{\mathbb{G}^n}\int_{\mathbb{G}^n} {\left| {d_{C}(x,y)} \right|^{-Q-sp}\left( {\frac{\left| {v(x)-v(y)} \right|^{p}}{p}+\frac{\left| {u(x)-u(y)} \right|^{p}}{p/(p-1)}} \right)}\mathrm{d}x\mathrm{d}y   \\
            =&\frac{1}{p}\int_{\mathbb{G}^n}\int_{\mathbb{G}^n} {\left| {d_{C}(x,y)} \right|^{-Q-sp}\left| {u(x)-u(y)} \right|^{p}}\mathrm{d}x\mathrm{d}y   \\
            &-\frac{1}{p}\int_{\mathbb{G}^n}\int_{\mathbb{G}^n} {\left| {d_{C}(x,y)} \right|^{-Q-sp}\left| {v(x)-v(y)} \right|^{p}}\mathrm{d}x\mathrm{d}y,
        \end{aligned}
    \end{equation*}
    which implies that  $u$ is a minimizer of \eqref{F function in sec2} over $\mathcal{K}_{g}$.
\end{proof}
\end{subsection}
\end{section}

\begin{section}{Fundamental estimates}\label{section_Fundamental estimates}
In this section, we present the proofs of relevant estimates that will be utilized in subsequent analysis. We believe that these results may be of independent interest in the analysis of equations involving the (nonlinear) fractional Laplacian and related nonlocal operators. The first result provides a natural extension of the well-known Caccioppoli inequality to the nonlocal framework.

\textbf{Proof of Theorem \ref{theo_Caccioppoli estimates with tail}}.
For the sake of generality,  it should be noted  that the present proof is also valid when $p=1$. Consider $u$ as a weak solution. Testing \eqref{test of sub(super)solution} with $\eta:=w_{+}\phi^p$, where $\phi$ is any nonnegative function in $C_{0}^{\infty}\left( {B\left( {x_{0},r} \right)} \right)$, we have
\begin{equation}\label{Caccioppoli estimates_two terms}
    \begin{aligned}
        0\ge&\int_{\mathbb{G}^n}\int_{\mathbb{G}^n} {\left| {d_{C}(x,y)}\right|^{-Q-sp}\left| {u(x)-u(y)} \right|^{p-2}\left( {u(x)-u(y)} \right)\left( {\eta(x)-\eta(y)} \right)}\mathrm{d}x\mathrm{d}y   \\
        =&\int_{B_{r}}\int_{B_{r}} {\left| {d_{C}(x,y)}\right|^{-Q-sp}\left| {u(x)-u(y)} \right|^{p-2}\left( {u(x)-u(y)} \right)\left( {w_{+}(x)\phi^p(x)-w_{+}(y)\phi^p(y)} \right)}\mathrm{d}x\mathrm{d}y   \\
        &+2\int_{\mathbb{G}^n\backslash B_{r}}\int_{B_{r}} {\left| {d_{C}(x,y)}\right|^{-Q-sp}\left| {u(x)-u(y)} \right|^{p-2}\left( {u(x)-u(y)} \right)w_{+}(x)\phi^p(x)}\mathrm{d}x\mathrm{d}y.
    \end{aligned}
\end{equation}
Observe that $\eta$ qualifies as an admissible test function, since truncations of functions in $W^{s,p}\left( {{\mathbb{G}}^{n}} \right)$ remain in $W^{s,p}\left( {{\mathbb{G}}^{n}} \right)$.

We first deal with the integrands of the two terms in \eqref{Caccioppoli estimates_two terms} separately. For the first term, without loss of generality, we may assume  that $u(x)\ge u(y)$; otherwise just exchange the roles of $x$ and $y$ below.   Thus we otain
\begin{equation}\label{Caccioppoli estimates_first term}
    \begin{aligned}
        &\left| {u(x)-u(y)} \right|^{p-2}\left( {u(x)-u(y)} \right)\left( {w_{+}(x)\phi^p(x)-w_{+}(y)\phi^p(y)} \right)   \\
        =&\left( {u(x)-u(y)} \right)^{p-1}\left( {\left( {u(x)-k} \right)_{+}\phi^p(x)-\left( {u(y)-k} \right)_{+}\phi^p(y)} \right)   \\
        =&\left \{
        \begin{aligned}
            &\left( {w_{+}(x)-w_{+}(y)} \right)^{p-1}\left( {w_{+}(x)\phi^p(x)-w_{+}(y)\phi^p(y)} \right),&&u(x),u(y)>k   \\
            &\left( {u(x)-u(y)} \right)^{p-1}w_{+}(x)\phi^p(x),&&u(x)>k,u(y)\le k   \\
            &0,&&\mathrm{otherwise}
        \end{aligned}
        \right.   \\
        \ge&\left( {w_{+}(x)-w_{+}(y)} \right)^{p-1}\left( {w_{+}(x)\phi^p(x)-w_{+}(y)\phi^p(y)} \right).
    \end{aligned}
\end{equation}
Next we consider the second term on the right-hand side of the inequality in \eqref{Caccioppoli estimates_two terms}. At this time,  we obtain
\begin{equation*}
    \begin{aligned}
        \left| {u(x)-u(y)} \right|^{p-2}\left( {u(x)-u(y)} \right)w_{+}(x)&\ge-\left( {u(y)-u(x)} \right)_{+}^{p-1}(u(x)-k)_{+}   \\
        &\ge-\left( {u(y)-k} \right)_{+}^{p-1}(u(x)-k)_{+}   \\
        &=-w_{+}^{p-1}(y)w_{+}(x),
    \end{aligned}
\end{equation*}
and, upon further estimation, we obtain
\begin{equation}\label{Caccioppoli estimates_second term}
    \begin{aligned}
        &\int_{\mathbb{G}^n\backslash B_{r}}\int_{B_{r}} {\left| {d_{C}(x,y)}\right|^{-Q-sp}\left| {u(x)-u(y)} \right|^{p-2}\left( {u(x)-u(y)} \right)w_{+}(x)\phi^p(x)}\mathrm{d}x\mathrm{d}y   \\
        \ge&-\int_{\mathbb{G}^n\backslash B_{r}}\int_{B_{r}} {\left| {d_{C}(x,y)}\right|^{-Q-sp}w_{+}^{p-1}(y)w_{+}(x)\phi^p(x)}\mathrm{d}x\mathrm{d}y   \\
        \ge&-\int_{B_{r}} {w_{+}(x)\phi^p(x)}\mathrm{d}x\left( {\mathop{\sup}_{x\in \mathrm{supp}\,\phi}\int_{\mathbb{G}^n\backslash B_{r}} {\left| {d_{C}(x,y)}\right|^{-Q-sp}w_{+}^{p-1}(y)}\mathrm{d}y} \right).
    \end{aligned}
\end{equation}
By combining \eqref{Caccioppoli estimates_first term} and \eqref{Caccioppoli estimates_second term}, we thus deduce from \eqref{Caccioppoli estimates_two terms} that
\begin{equation}\label{Caccioppoli estimates_0>two terms replace}
    \begin{aligned}
        0\ge&\int_{B_{r}}\int_{B_{r}} \left| {d_{C}(x,y)}\right|^{-Q-sp}\left| {u(x)-u(y)} \right|^{p-2}   \\
        &\times \left( {w_{+}(x)-w_{+}(y)} \right)\left( {w_{+}(x)\phi^p(x)-w_{+}(y)\phi^p(y)} \right)\mathrm{d}x\mathrm{d}y   \\
        &-2\int_{B_{r}} {w_{+}(x)\phi^p(x)}\mathrm{d}x\left( {\mathop{\sup}_{x\in \mathrm{supp}\,\phi}\int_{\mathbb{G}^n\backslash B_{r}} {\left| {d_{C}(x,y)}\right|^{-Q-sp}w_{+}^{p-1}(y)}\mathrm{d}y} \right).
    \end{aligned}
\end{equation}
Furthermore, we examine the first term in the inequality \eqref{Caccioppoli estimates_0>two terms replace}. If $w_{+}(x)\ge w_{+}(y)$ and $\phi(x)\le\phi(y)$ in the integrand, we use Lemma \ref{lemma_Gamma small inequality}  to obtain
\begin{equation*}
    \phi^p(x)\ge\left( {1-c_{p}\epsilon} \right)\phi^p(y)-\left( {1+c_{p}\epsilon} \right)\epsilon^{1-p}\left| {\phi(x)-\phi(y)} \right|^p
\end{equation*}
for any $\epsilon \in\left( {0,1} \right]$ with the constant $c_{p}\equiv (p-1)\Gamma\left( {\max\{1,p-2\}} \right)$.
Thus, we choose
\begin{equation*}
    \epsilon :=\frac{1}{\max\left\{ {1,2c_{p}} \right\}}\frac{w_{+}(x)-w_{+}(y)}{w_{+}(x)}\in\left( {0,1} \right]
\end{equation*}
to obtain
\begin{equation*}
    \begin{aligned}
        \left( {w_{+}(x)-w_{+}(y)} \right)^{p-1}w_{+}(x)\phi^p(x)\ge&\left( {w_{+}(x)-w_{+}(y)} \right)^{p-1}w_{+}(x)\left( {\max\left\{ {\phi(x),\phi(y)} \right\}} \right)^p   \\
        &-\frac{1}{2}\left( {w_{+}(x)-w_{+}(y)} \right)^{p}\left( {\max\left\{ {\phi(x),\phi(y)} \right\}} \right)^p   \\
        &-c\left( {\max\left\{ {w_{+}(x),w_{+}(y)} \right\}} \right)^p\left| {\phi(x)-\phi(y)} \right|^p,
    \end{aligned}
\end{equation*}
where $c$ depends only on $p$. As noted in the estimate above, we assumed that $\phi(x)\le\phi(y)$ and $\max \left\{ {\phi(x),\phi(y)} \right\}=\phi(y)$. If $0=w_{+}(x)\ge w_{+}(y)\ge0$ or $w_{+}(x)\ge w_{+}(y)$ and $\phi(x)\ge\phi(y)$, the displayed estimate can be easily proved and    it holds in these cases as well. It follows that
\begin{equation*}
    \begin{aligned}
        &\left( {w_{+}(x)-w_{+}(y)} \right)^{p-1}\left( {w_{+}(x)\phi^p(x)-w_{+}(y)\phi^p(y)} \right)   \\
        \ge&\left( {w_{+}(x)-w_{+}(y)} \right)^{p-1}\left[ {w_{+}(x)\left( {\max\left\{ {\phi(x),\phi(y)} \right\}} \right)^p-w_{+}(y)\phi^p(y)} \right]   \\
        &-\frac{1}{2}\left( {w_{+}(x)-w_{+}(y)} \right)^{p}\left( {\max\left\{ {\phi(x),\phi(y)} \right\}} \right)^p   \\
        &-c\left( {\max\left\{ {w_{+}(x),w_{+}(y)} \right\}} \right)^p\left| {\phi(x)-\phi(y)} \right|^p   \\
        \ge&\frac{1}{2}\left( {w_{+}(x)-w_{+}(y)} \right)^{p}\left( {\max\left\{ {\phi(x),\phi(y)} \right\}} \right)^p   \\
        &-c\left( {\max\left\{ {w_{+}(x),w_{+}(y)} \right\}} \right)^p\left| {\phi(x)-\phi(y)}
        \right|^p
    \end{aligned}
\end{equation*}
whenever $w_{+}(x)\ge w_{+}(y)$. Conversely, if $w_{+}(y)> w_{+}(x)$ in the integrand, we change the roles of $x$ and $y$   in the displayed expression by similar arguments. Thus, in all cases, we obtain
\begin{equation}\label{Caccioppoli estimates_first term replace eqref(Caccioppoli estimates_0>two terms replace)}
    \begin{aligned}
        &\int_{B_{r}}\int_{B_{r}} \left| {d_{C}(x,y)}\right|^{-Q-sp}\left| {u(x)-u(y)} \right|^{p-2}   \\
        &\times \left( {w_{+}(x)-w_{+}(y)} \right)\left( {w_{+}(x)\phi^p(x)-w_{+}(y)\phi^p(y)} \right)\mathrm{d}x\mathrm{d}y   \\
        \ge&\frac{1}{2}\int_{B_{r}}\int_{B_{r}} {\left| {d_{C}(x,y)} \right|^{-Q-sp}\left| {w_{+}(x)-w_{+}(y)} \right|^{p}\left( {\max\left\{ {\phi(x),\phi(y)} \right\}} \right)^p}\mathrm{d}x\mathrm{d}y   \\
        &-c\int_{B_{r}}\int_{B_{r}} {\left| {d_{C}(x,y)} \right|^{-Q-sp}\left( {\max\left\{ {w_{+}(x),w_{+}(y)} \right\}} \right)^p\left| {\phi(x)-\phi(y)} \right|^p}\mathrm{d}x\mathrm{d}y.
    \end{aligned}
\end{equation}
Now, we note that
\begin{equation*}
    \begin{aligned}
        \left| {w_{+}(x)\phi(x)-w_{+}(y)\phi(y)} \right|^p\le&2^{p-1}\left| {w_{+}(x)-w_{+}(y)} \right|^p\left( {\max\left\{ {\phi(x),\phi(y)} \right\}} \right)^p   \\
        &+2^{p-1}\left( {\max\left\{ {w_{+}(x),w_{+}(y)} \right\}} \right)^p\left| {\phi(x)-\phi(y)}
        \right|^p.
    \end{aligned}
\end{equation*}
We combine the preceding inequality with \eqref{Caccioppoli estimates_0>two terms replace} and  \eqref{Caccioppoli estimates_first term replace eqref(Caccioppoli estimates_0>two terms replace)} to deduce the proof of \eqref{Caccioppoli estimates with tail} for $w_{+}$.

To establish the estimate in \eqref{Caccioppoli estimates with tail} for $w_{-}$, it is sufficient to use the same arguments as above and use  the function $\eta=-w_{-}\phi$ instead of $\eta=w_{+}\phi$ as a test function in the weak formulation of problem \eqref{integro-differential problem in sec2}.

In the  above proof, we utilized the following technical inequality(see \cite[Lemma 3.1]{di2016local}), which will also be useful in subsequent proofs.
\begin{lemma}\label{lemma_Gamma small inequality}
    Let $p\ge1$ and $\epsilon \in\left( {0,1} \right]$. Then
    \begin{equation*}
        \left| a \right|^p\le\left| b \right|^p +c_{p}\epsilon\left| b \right|^p+\left( {1+c_{p}\epsilon} \right)\epsilon^{1-p}\left| a-b \right|^p, \quad c_{p}:=(p-1)\Gamma\left( {\max\{1,p-2\}} \right),
    \end{equation*}
    holds for every $a,b\in \mathbb{R}^l$ and $l\ge1$, where $\Gamma$ stands for the standard Gamma function.
\end{lemma}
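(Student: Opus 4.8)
The plan is to reduce the vector-valued estimate to a genuinely one-variable inequality and then close it off by elementary calculus. First I would dispose of the trivial cases. If $p=1$ then $c_1=0$ and the assertion is exactly the triangle inequality $|a|\le|b|+|a-b|$. If $|a|\le|b|$ then the left-hand side is at most $|b|^p$, which is already controlled by the first term on the right, all remaining terms being nonnegative. We may also assume $b\neq 0$: for $b=0$ the claim reads $|a|^p\le(1+c_p\epsilon)\epsilon^{1-p}|a|^p$, which holds because $\epsilon^{1-p}\ge 1$ for $\epsilon\in(0,1]$ and $p\ge 1$. So from now on $p>1$ and $b\neq 0$.

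Next I would homogenize. Writing $t:=|a-b|/|b|\ge 0$, the triangle inequality together with the monotonicity of $s\mapsto s^p$ on $[0,\infty)$ gives $|a|^p\le|b|^p(1+t)^p$; since $\epsilon^{1-p}|a-b|^p=|b|^p\epsilon^{1-p}t^p$, dividing the target inequality by $|b|^p$ shows that it suffices to prove
\[
(1+t)^p\le(1+c_p\epsilon)\bigl(1+\epsilon^{1-p}t^p\bigr),\qquad t\ge 0,\ \epsilon\in(0,1].
\]
To locate the worst $t$, I would substitute $t=\epsilon\tau$, which turns the right-hand side into $(1+c_p\epsilon)(1+\epsilon\tau^p)$, and differentiate the logarithm of $\tau\mapsto(1+\epsilon\tau)^p/(1+\epsilon\tau^p)$: the critical-point equation collapses to $\tau^{p-1}=1$, i.e. $\tau=1$ (equivalently $t=\epsilon$), where the ratio equals $(1+\epsilon)^{p-1}$, while the endpoint limits are $1$ at $\tau=0$ and $\epsilon^{p-1}\le1$ as $\tau\to\infty$. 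Since $(1+\epsilon)^{p-1}\ge 1$ exceeds both endpoint values, $\tau=1$ is the maximum, and hence the displayed scalar inequality — and with it Lemma~\ref{lemma_Gamma small inequality} — is equivalent to
\[
(1+\epsilon)^{p-1}\le 1+c_p\epsilon,\qquad \epsilon\in(0,1].
\]

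It remains to verify this with a constant of the stated form. For $1<p\le 2$ the exponent $p-1$ lies in $(0,1]$, so $s\mapsto s^{p-1}$ is concave and its graph lies below its tangent at $s=1$, which gives $(1+\epsilon)^{p-1}\le 1+(p-1)\epsilon$ — precisely the claim, since here $c_p=(p-1)\Gamma(1)=p-1$. For $p>2$ one instead expands $\epsilon\mapsto(1+\epsilon)^{p-1}$ by Taylor's formula with Lagrange remainder on $[0,\epsilon]$; the successive derivatives of $s\mapsto s^{p-1}$ are $\tfrac{\Gamma(p)}{\Gamma(p-k)}\,s^{p-1-k}$, evaluated at a point of $[1,2]$, and controlling these remainder contributions is what produces a constant of the shape $(p-1)\Gamma(\max\{1,p-2\})$.

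The step I expect to be the real obstacle is exactly this last one — getting the constant into the precise $\Gamma$-form — since everything before it is soft, whereas here one must control the remainder of a power just sharply enough, and this is where the Gamma function enters. An equivalent and perhaps more transparent route, which avoids the substitutions, is to start from $\bigl|\,|a|^p-|b|^p\,\bigr|\le p\max\{|a|,|b|\}^{p-1}|a-b|$ and, in the remaining case $|a|>|b|$, apply Young's inequality with a parameter $\delta$ comparable to $\epsilon$ to reabsorb the resulting $|a|^p$ on the right back into the left-hand side; choosing $\delta$ appropriately again reduces matters to the one-variable bound $(1+\epsilon)^{p-1}\le 1+c_p\epsilon$. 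This is the argument carried out in \cite[Lemma~3.1]{di2016local}, to which the statement here refers.
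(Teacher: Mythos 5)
Your reduction is correct and in fact sharp: restricting to colinear $a,b$ with $|a-b|=\epsilon|b|$ shows that Lemma \ref{lemma_Gamma small inequality} is \emph{equivalent} to the scalar inequality $(1+\epsilon)^{p-1}\le 1+c_p\epsilon$ for all $\epsilon\in(0,1]$, and your critical-point computation (the maximum of $(1+t)^p/(1+\epsilon^{1-p}t^p)$ is attained at $t=\epsilon$) gives the converse implication. The genuine gap is exactly the step you flagged as the expected obstacle, and it is not merely hard: no Taylor-remainder argument can produce the constant $c_p=(p-1)\Gamma(\max\{1,p-2\})$, because the equivalent scalar inequality is false with that constant. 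For $2<p\le 3$ one has $\max\{1,p-2\}=1$, hence $c_p=p-1$, while strict convexity gives $(1+\epsilon)^{p-1}>1+(p-1)\epsilon$ for every $\epsilon>0$; concretely, $p=3$, $\epsilon=1/2$, $a=3/2$, $b=1$ gives left-hand side $27/8$ and right-hand side $1+1+2\cdot 4\cdot\tfrac18=3$, so the stated inequality fails. More generally, since $\sup_{\epsilon\in(0,1]}\epsilon^{-1}\bigl[(1+\epsilon)^{p-1}-1\bigr]=2^{p-1}-1$ for $p\ge2$, validity for all $\epsilon\in(0,1]$ forces $c_p\ge 2^{p-1}-1$, and $(p-1)\Gamma(\max\{1,p-2\})$ violates this for every $p\in(2,3]$ and indeed for all $p$ up to roughly $6$. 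So the defect lies in the constant of the statement (quoted verbatim from \cite[Lemma 3.1]{di2016local}; note the present paper offers no proof of it, only the citation), not in your strategy, but as written your proposal cannot be completed.

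What your argument actually proves is the sharp form $|a|^p\le(1+\epsilon)^{p-1}\bigl(|b|^p+\epsilon^{1-p}|a-b|^p\bigr)$, equivalently the lemma with any admissible $c_p$: for $1\le p\le 2$ your tangent-line/concavity step gives $c_p=p-1$, and for $p\ge 2$ the mean value theorem gives $(1+\epsilon)^{p-1}-1\le(p-1)(1+\epsilon)^{p-2}\epsilon\le(p-1)2^{p-2}\epsilon$, so $c_p=(p-1)2^{p-2}$ (or simply $c_p=2^{p-1}-1$) works. Since every subsequent use of Lemma \ref{lemma_Gamma small inequality} in the paper only requires \emph{some} constant depending solely on $p$, this corrected version suffices for all downstream estimates; you should therefore either prove the sharp form above or restate the lemma with an enlarged $c_p$, rather than trying to force the $\Gamma$-shaped constant through the last step.
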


In what follows, we prove Logarithmic Lemma, which is the second main tool in this paper.
\begin{proof}[\bf Proof of Theorem \ref*{lemma_Logarithmic Lemma}]
    Let $d>0$ be a parameter. It follows from \cite[Theorem 3.3]{Danielli} that we can choose a smooth cut-off function $\phi \in C_{0}^{\infty}\left( {B_{3r/2}} \right)$ such that
    \begin{equation*}
        0\le\phi\le1,\quad \phi\equiv1 \mathrm  \,\, \mathrm{in} \,\, B_{r} \,\,\mathrm{and}\,\, \left| {\nabla_{\mathbb{G}^{n}}\phi} \right|<cr^{-1}\,\, \mathrm{in}\,\,B_{r}\subset B_{R/2}.
    \end{equation*}
    In the weak formulation of \eqref{integro-differential problem in sec2}, we utilize the test function $\eta$ in the Definition \ref{def_weak solutions}, defined by
    \begin{equation*}
        \eta =\left( {u+d} \right)^{1-p}\phi^{p}.
    \end{equation*}
    Then we have
    \begin{equation}\label{0=I1+I2 in proof of Logarithmic Lemma}
        \begin{aligned}
            0=&\int_{\mathbb{G}^n}\int_{\mathbb{G}^n} {\left| {d_{C}(x,y)}\right|^{-Q-sp}\left| {u(x)-u(y)} \right|^{p-2}\left( {u(x)-u(y)} \right)\left( {\eta(x)-\eta(y)} \right)}\mathrm{d}x\mathrm{d}y   \\
            =&\int_{B_{2r}}\int_{B_{2r}} {\left| {d_{C}(x,y)}\right|^{-Q-sp}\left| {u(x)-u(y)} \right|^{p-2}\left( {u(x)-u(y)} \right)}   \\
            &\times \left[ {\frac{\phi^{p}(x)}{\left( {u(x)+d} \right)^{p-1}}-\frac{\phi^{p}(y)}{\left( {u(y)+d} \right)^{p-1}}} \right]\mathrm{d}x\mathrm{d}y   \\
            &+2\int_{\mathbb{G}^n\backslash B_{2r}}\int_{B_{2r}} {\left| {d_{C}(x,y)}\right|^{-Q-sp}\left| {u(x)-u(y)} \right|^{p-2}\frac{u(x)-u(y)}{\left( {u(x)+d} \right)^{p-1}}\phi^{p}(x)}\mathrm{d}x\mathrm{d}y   \\
            =&:I_{1}+I_{2}
        \end{aligned}
    \end{equation}
    We first consider the term $I_1$.  If $u(x)\ge u(y)$, we use inequality in Lemma \ref{lemma_Gamma small inequality} via choosing $a=\phi(x)$ and $b=\phi(y)$, and
    \begin{equation*}
        \epsilon =\delta \frac{u(x)-u(y)}{u(x)+d}, \quad \delta \in(0,1),
    \end{equation*}
    since $u(y)\ge0$ for any $y\in B_{2r}\subset B_{R}$. Therefore, we  estimate the integrand in $I_{1}$ as follows,
    \begin{equation}\label{integral of I1 estimate in proof of Logarithmic Lemma}
        \begin{aligned}
            &\left| {d_{C}(x,y)}\right|^{-Q-sp}\left| {u(x)-u(y)} \right|^{p-2}\left( {u(x)-u(y)} \right)\left[ {\frac{\phi^{p}(x)}{\left( {u(x)+d} \right)^{p-1}}-\frac{\phi^{p}(y)}{\left( {u(y)+d} \right)^{p-1}}} \right]   \\
            \le&\left| {d_{C}(x,y)}\right|^{-Q-sp}\frac{\left( {u(x)-u(y)} \right)^{p-1}}{\left( {u(x)+d} \right)^{p-1}}\phi^{p}(y)\left[ {1+c\delta\frac{u(x)-u(y)}{u(x)+d}-\left( {\frac{u(x)+d}{u(y)+d}} \right)^{p-1}} \right]   \\
            &+c\delta^{1-p}\left| {d_{C}(x,y)}\right|^{-Q-sp}\left| {\phi(x)-\phi(y)} \right|^p,
        \end{aligned}
    \end{equation}
    where $c\equiv c(p)$. Note that the first term on the right-hand side of (\ref{integral of I1 estimate in proof of Logarithmic Lemma}) can be rewritten as
    \begin{equation}\label{J1 in proof of Logarithmic Lemma}
        \left| {d_{C}(x,y)}\right|^{-Q-sp}\left( {\frac{u(x)-u(y)}{u(x)+d}} \right)^{p}\phi^{p}(y)\left[ {\frac{1-\left( {\frac{u(y)+d}{u(x)+d}} \right)^{1-p}}{1-\frac{u(y)+d}{u(x)+d}}+c\delta} \right]=:J_{1}.
    \end{equation}

    Define the real function $g(t)$ as
    \begin{equation*}
        g(t):=\frac{1-t^{1-p}}{1-t}=-\frac{p-1}{1-t}\int_{t}^{1} {\tau^{-p}}\mathrm{d}\tau \quad \forall t\in(0,1).
    \end{equation*}
    It is easy to see that  the function $g$ is increasing in $t$ and
    \begin{equation*}
        g(t)\le-(p-1),\quad \forall \,t\in(0,1);
    \end{equation*}
    moreover, for any $t\le1/2$,
    \begin{equation*}
        g(t)\le -\frac{p-1}{2^p}\frac{t^{1-p}}{1-t}.
    \end{equation*}
    Thus, if
    \begin{equation*}
        t=\frac{u(y)+d}{u(x)+d}\in ( {0,\frac{1}{2}}  ],
    \end{equation*}
    that is
    \begin{equation*}
        u(y)+d\le\frac{u(x)+d}{2},
    \end{equation*}
    then  we get
    \begin{equation*}
        J_{1}\le \left| {d_{C}(x,y)}\right|^{-Q-sp}\left( {c\delta-\frac{p-1}{2^p}} \right)\left[ {\frac{u(x)-u(y)}{u(y)+d}}
        \right]^{p-1}\phi^p(y)
    \end{equation*} due to  $\left( {u(x)-u(y)} \right)\left( {u(y)+d} \right)^{p-1}/\left( {u(x)+d}
    \right)^{p}\le1$.
    Hence, we select the appropriate $\delta$ as
    \begin{equation}\label{delta in proof of Logarithmic Lemma}
        \delta=\frac{p-1}{2^{p+1}c}
    \end{equation}
    in the preceding inequality to obtain
    \begin{equation}\label{estimate s1 of J1 in proof of Logarithmic Lemma}
        J_{1}\le -\left| {d_{C}(x,y)}\right|^{-Q-sp}\frac{p-1}{2^{p+1}}\left[ {\frac{u(x)-u(y)}{u(y)+d}} \right]^{p-1}.
    \end{equation}
    Now we consider the case
    \begin{equation*}
        u(y)+d>\frac{u(x)+d}{2},
    \end{equation*}
    i.e.,
    \begin{equation*}
        t=\frac{u(y)+d}{u(x)+d}\in ( {\frac{1}{2},1}  ),
    \end{equation*}
    then
    \begin{equation*}
        J_{1}\le \left| {d_{C}(x,y)}\right|^{-Q-sp}\left( {c\delta-(p-1)} \right)\left[ {\frac{u(x)-u(y)}{u(y)+d}} \right]^{p}\phi^p(y).
    \end{equation*}
    The parameter $\delta$ can be chosen as in \eqref{delta in proof of Logarithmic Lemma}, and we have
    \begin{equation}\label{estimate s2 of J1 in proof of Logarithmic Lemma}
        J_{1}\le -\left| {d_{C}(x,y)}\right|^{-Q-sp}\frac{\left( {2^{p+1}-1} \right)\left( {p-1} \right)}{2^{p+1}}\left[ {\frac{u(x)-u(y)}{u(y)+d}} \right]^{p}\phi^p(y).
    \end{equation}
    Furthermore, if $2(u(y)+d)< u(x)+d$, then the following inequality holds with $c\equiv c(p)$:
    \begin{equation}\label{log inequality s1 in proof of Logarithmic Lemma}
        \left[ {\mathrm{log}\left( {\frac{u(x)+d}{u(y)+d}} \right)} \right]^{p}\le c\left[ {\frac{u(x)-u(y)}{u(y)+d}} \right]^{p-1},
    \end{equation}
    where we have utilized the fact that $\left( {\mathrm{log} \,\xi} \right)^p\le c\left( {\xi-1} \right)^{p-1}$ when $\xi>2$. On the other hand, if $2(u(y)+d)\ge u(x)+d$, noting that we have assumed $u(x)>u(y)$, then
    \begin{equation}\label{log inequality s2 in proof of Logarithmic Lemma}
        \left[ {\mathrm{log}\left( {\frac{u(x)+d}{u(y)+d}} \right)} \right]^{p}= \left[ {\mathrm{log}\left( {1+\frac{u(x)-u(y)}{u(y)+d}} \right)} \right]^{p}\le2^p\left( {\frac{u(x)-u(y)}{u(x)+d}} \right)^p
    \end{equation}
    where we have used
    \begin{equation*}
        \mathrm{log}\left( {1+\xi} \right)<\xi,\quad \forall\,\xi>0,\quad \mathrm{with}\;\,\xi=\frac{u(x)-u(y)}{u(y)+d}\le\frac{2[u(x)-u(y)]}{u(x)+d}.
    \end{equation*}

    Thus, combining \eqref{integral of I1 estimate in proof of Logarithmic Lemma} with \eqref{J1 in proof of Logarithmic Lemma}, \eqref{estimate s1 of J1 in proof of Logarithmic Lemma}, \eqref{estimate s2 of J1 in proof of Logarithmic Lemma}, \eqref{log inequality s1 in proof of Logarithmic Lemma} and \eqref{log inequality s2 in proof of Logarithmic Lemma}, we conclude with
    \begin{equation*}
        \begin{aligned}
            &\left| {d_{C}(x,y)}\right|^{-Q-sp}\left| {u(x)-u(y)} \right|^{p-2}\left( {u(x)-u(y)} \right)\left[ {\frac{\phi^{p}(x)}{\left( {u(x)+d} \right)^{p-1}}-\frac{\phi^{p}(y)}{\left( {u(y)+d} \right)^{p-1}}} \right]   \\
            \le&-\frac{1}{c}\left| {d_{C}(x,y)}\right|^{-Q-sp}\left[ {\mathrm{log}\left( {\frac{u(x)+d}{u(y)+d}} \right)} \right]^p\phi^{p}(y)   \\
            &+c\delta^{1-p}\left| {d_{C}(x,y)}\right|^{-Q-sp}\left| {\phi(x)-\phi(y)} \right|^p.
        \end{aligned}
    \end{equation*}
    Observe that if $u(x)=u(y)$, the same estimate above holds trivially. If, on the other hand, $u(y)>u(x)$, we can interchange the roles of $x$ and $y$ in the computation above. Thus, we obtain the estimate for the integral $I_1$ in \eqref{0=I1+I2 in proof of Logarithmic Lemma} as
    \begin{equation}\label{I1 estimate in proof of Logarithmic Lemma}
        \begin{aligned}
            I_{1}\le& -\frac{1}{c}\int_{B_{2r}}\int_{B_{2r}} {\left| {d_{C}(x,y)}\right|^{-Q-sp}\left| {\mathrm{log}\left( {\frac{u(x)+d}{u(y)+d}} \right)} \right|^p\phi^{p}(y)}\mathrm{d}x\mathrm{d}y   \\
            &+c\int_{B_{2r}}\int_{B_{2r}} {\left| {d_{C}(x,y)}\right|^{-Q-sp}\left| {\phi(x)-\phi(y)} \right|^p}\mathrm{d}x\mathrm{d}y
        \end{aligned}
    \end{equation}
    for a constant $c\equiv c(p)$ by the choice of $\delta$.

    In what follows, we consider  $I_{2}$ in \eqref{0=I1+I2 in proof of Logarithmic Lemma}. First, observe that when $y\in B_{R}$, $u(y)\ge0$, thus
    \begin{equation*}
        \frac{\left( {u(x)-u(y)} \right)_{+}^{p-1}}{\left( {d+u(x)} \right)^{p-1}}\le1 \quad\forall \,x\in B_{2r},\,\,y\in B_{R}.
    \end{equation*}
    Moreover, when $y\in\mathbb{G}^n\backslash B_{R}$,
    \begin{equation*}
        \left( {u(x)-u(y)} \right)_{+}^{p-1}\le2^{p-1}\left[ {u^{p-1}(x)+\left( {u(y)} \right)_{-}^{p-1}} \right]  \quad\forall\,x\in B_{2r}.
    \end{equation*}
    Therefore,
    \begin{equation*}
        \begin{aligned}
            I_2\le&2\int_{B_R\backslash B_{2r}}\int_{B_{2r}} {\left| {d_{C}(x,y)}\right|^{-Q-sp}\frac{\left( {u(x)-u(y)} \right)_{+}^{p-1}}{\left( {u(x)+d} \right)^{p-1}}\phi^{p}(x)}\mathrm{d}x\mathrm{d}y   \\
            &+2\int_{\mathbb{G}^n\backslash B_{R}}\int_{B_{2r}} {\left| {d_{C}(x,y)}\right|^{-Q-sp}\frac{\left( {u(x)-u(y)} \right)_{+}^{p-1}}{\left( {u(x)+d} \right)^{p-1}}\phi^{p}(x)}\mathrm{d}x\mathrm{d}y   \\
            \le&c\int_{B_R\backslash B_{2r}}\int_{B_{2r}} {\left| {d_{C}(x,y)}\right|^{-Q-sp}\phi^{p}(x)}\mathrm{d}x\mathrm{d}y   \\
            &+c\int_{\mathbb{G}^n\backslash B_{R}}\int_{B_{2r}} {\left| {d_{C}(x,y)}\right|^{-Q-sp}\frac{u^{p-1}(x)+\left( {u(y)} \right)_{-}^{p-1}}{\left( {u(x)+d} \right)^{p-1}}\phi^{p}(x)}\mathrm{d}x\mathrm{d}y.
        \end{aligned}
    \end{equation*}
    Then we estimate the integral $I_2$ as
    \begin{equation}\label{I2 le I2,1+I2,2 in proof of Logarithmic Lemma}
        \begin{aligned}
            I_2\le&c\int_{B_R\backslash B_{2r}}\int_{B_{2r}} {\left| {d_{C}(x,y)}\right|^{-Q-sp}\phi^{p}(x)}\mathrm{d}x\mathrm{d}y   \\
            &+c\int_{\mathbb{G}^n\backslash B_{R}}\int_{B_{2r}} {\left| {d_{C}(x,y)}\right|^{-Q-sp}\phi^{p}(x)}\mathrm{d}x\mathrm{d}y   \\
            &+cd^{1-p}\int_{\mathbb{G}^n\backslash B_{R}}\int_{B_{2r}} {\left| {d_{C}(x,y)}\right|^{-Q-sp}\left( {u(y)} \right)_{-}^{p-1}}\mathrm{d}x\mathrm{d}y   \\
            =&c\int_{\mathbb{G}^n\backslash B_{2r}}\int_{B_{2r}} {\left| {d_{C}(x,y)}\right|^{-Q-sp}\phi^{p}(x)}\mathrm{d}x\mathrm{d}y   \\
            &+cd^{1-p}\int_{\mathbb{G}^n\backslash B_{R}}\int_{B_{2r}} {\left| {d_{C}(x,y)}\right|^{-Q-sp}\left( {u(y)} \right)_{-}^{p-1}}\mathrm{d}x\mathrm{d}y   \\
            :=&I_{2,1}+I_{2,2},
        \end{aligned}
    \end{equation}
    where $c\equiv c(p)$, since $u(x)\ge0$ and $\phi(x)\le1$ on $B_{2r}$.

    From now on, the proof diverges from the Euclidean case presented in \cite{di2016local}, where the logarithmic estimates plainly follows, since the measure of the ball in the Grushin-type spaces depends on its center by Lemma \ref{lemma_measure of ball in Grushin spaces}, unlike the case in the Euclidean space.

    For $I_{2,2}$, observe that for any $x\in B_{r}$, $y\in \mathbb{G}^n\backslash B_{R}$ and $2r\le R$,
    \begin{equation*}
        \frac{\left| {d_C\left( {x_0,y} \right)} \right|}{\left| {d_C\left( {x,y} \right)} \right|}\le1+\frac{\left| {d_C\left( {x_0,x} \right)} \right|}{\left| {d_C\left( {x,y} \right)} \right|}\le1+\frac{r}{R-r}\le2,
    \end{equation*}
    and thus, we obtain
    \begin{equation}\label{I2,2 estimate in proof of Logarithmic Lemma}
        \begin{aligned}
            I_{2,2}&\le cd^{1-p}\left| {B(x_0,r)} \right|\int_{\mathbb{G}^n\backslash B_{R}} {\left| {d_{C}(x_0,y)}\right|^{-Q-sp}\left( {u(y)} \right)_{-}^{p-1}}\mathrm{d}y   \\
            &\le cd^{1-p}\left| {B(x_0,r)} \right|R^{-sp}\left[ {\mathrm{Tail}\left( {u_{-};x_{0},R} \right)} \right]^{p-1}   \\
            &\le cd^{1-p}\frac{r^n\left( {\left| {x_0} \right|_h+2r} \right)^{m(n-h)}}{R^{sp}}
            \left[ {\mathrm{Tail}\left( {u_{-};x_{0},R} \right)}
            \right]^{p-1},
        \end{aligned}
    \end{equation}
    where Lemma \ref{lemma_measure of ball in Grushin spaces} was applied with the constant $c$ depending only on $n,h,p,s$ and $m$.

    Note that for $x\in B ( {x_{0},3r/2})$ and $y\in \mathbb{G}^n\backslash B( {x_0, 2r})$, $$d_C(x_0,y)>2r>\frac{4}{3}d_C(x_0,x) $$ and $$d_C(x,y)\ge d_C(x_0,y)-d_C(x_0,x)\ge \frac{1}{4}d_C(x_0,y).$$ The integral $I_{2,1}$ can be estimated by recalling the definition of the cut-off function $\phi$ as
    \begin{equation}\label{I2,1 estimate in proof of Logarithmic Lemma}
        \begin{aligned}
            I_{2,1}&\le c\left| {B\left( {x_0,2r} \right)} \right|\mathop{\sup}_{x\in B\left( {x_{0},3r/2} \right)}\int_{\mathbb{G}^n\backslash B_{2r}} {\left| {d_{C}(x,y)}\right|^{-Q-sp}}\mathrm{d}y   \\
            &\le c\left| {B\left( {x_0,2r} \right)} \right|\mathop{\sup}_{x\in B\left( {x_{0},3r/2} \right)}\sum_{j=0}^{\infty}\int_{\left\{ {y\big|2^{j+2}r>d_{C}\left( {x_0,y} \right)\ge2^{j+1}r} \right\}}{\left| {d_{C}(x,y)}\right|^{-Q-sp}}\mathrm{d}y   \\
            &\le c | {B\left( {x_0,2r} \right)}| \sum_{j=0}^{\infty}\int_{\left\{ {y\big|2^{j+2}r>d_{C}\left( {x_0,y} \right)\ge2^{j+1}r} \right\}}{\left| {d_{C}(x_0,y)}\right|^{-Q-sp}}\mathrm{d}y   \\
            &\le cr^n\left( {\left| {x_0} \right|_h+2r} \right)^{m(n-h)}\sum_{j=0}^{\infty}\left[   {2^{j+1}   r} \right]^{-Q-sp}\left| {B\left( {x_0,2^{j+2}r} \right)} \right|   \\
            &\le cr^n\left( {\left| {x_0} \right|_h+2r} \right)^{m(n-h)}\sum_{j=0}^{\infty}\left[   {2^{j+1}   r} \right]^{-Q-sp}\left( {2^{j+2}r} \right)^n\left( {\left| {x_0} \right|_h+2^{j+2}r} \right)^{m(n-h)}   \\
            &\le cr^{2n-Q-sp}\left( {\left| {x_0} \right|_h+2r} \right)^{m(n-h)}\sum_{j=0}^{\infty}\left[   {2^{j+1}   r} \right]^{-Q-sp}\left( {2^{j+2}} \right)^n\left[ {\left| {x_0} \right|_h^{m(n-h)}+\left( {2^{j+2}r} \right)^{m(n-h)}} \right]   \\
            &\le cr^{2n-Q-sp}\left( {\left| {x_0} \right|_h+2r} \right)^{m(n-h)}\left[ {\sum_{j=0}^{\infty}\left( {2^j} \right)^{n-Q-sp}\left| {x_0} \right|_h^{m(n-h)}+\sum_{j=0}^{\infty}\left( {2^j} \right)^{-sp}r^{m(n-h)}} \right]   \\
            &\le cr^{2n-Q-sp}\left( {\left| {x_0} \right|_h+2r} \right)^{m(n-h)}\left[ {\left| {x_0} \right|_h^{m(n-h)}+r^{m(n-h)}} \right]   \\
            &\le cr^{2n-Q-sp}\left( {\left| {x_0} \right|+2r}
            \right)^{2m(n-h)},
        \end{aligned}
    \end{equation}
    where $c\equiv c(n,h,p,s,m)$. By integrating equations \eqref{0=I1+I2 in proof of Logarithmic Lemma}, \eqref{I1 estimate in proof of Logarithmic Lemma}, \eqref{I2 le I2,1+I2,2 in proof of Logarithmic Lemma}, \eqref{I2,2 estimate in proof of Logarithmic Lemma} and \eqref{I2,1 estimate in proof of Logarithmic Lemma}, the following result is derived,
    \begin{equation}\label{complex eq1 in proof of Logarithmic Lemma}
        \begin{aligned}
            &\int_{B_{2r}}\int_{B_{2r}} {\left| {d_{C}(x,y)}\right|^{-Q-sp}\left| {\mathrm{log}\left( {\frac{u(x)+d}{u(y)+d}} \right)} \right|^p\phi^{p}(y)}\mathrm{d}x\mathrm{d}y   \\
            \le&c\int_{B_{2r}}\int_{B_{2r}} {\left| {d_{C}(x,y)}\right|^{-Q-sp}\left| {\phi(x)-\phi(y)} \right|^p}\mathrm{d}x\mathrm{d}y   \\
            &+cr^{2n-Q-sp}\left( {\left| {x_0} \right|_h+2r} \right)^{2m(n-h)}   \\
            &+cd^{1-p}r^n\left( {\left| {x_0} \right|_h+2r} \right)^{m(n-h)}R^{-sp}\left[ {\mathrm{Tail}\left( {u_{-};x_{0},R} \right)}
            \right]^{p-1}.
        \end{aligned}
    \end{equation}
    In the subsequent calculation, the following mean value inequality holds: for any $x,y\in B(x_0,3r/2)$,
    \begin{equation}\label{3.19}
        \left| {\phi(x)-\phi(y)} \right|\le c\mathop{\sup}_{B\left( {x_{0},3r/2} \right)}\left| {\nabla _{\mathbb{G}^{n}}\phi} \right|
        {d_C(x,y)},
    \end{equation}
    where $c$ depends only on $m$.  In fact, let $B_i(x)=B(x,2^{-i}d_C(x,y))$ for each nonnegative integer $i$. Then $(\phi)_{B_i(x)}\rightarrow \phi(x)$ as $i\rightarrow \infty$. Using the triangle inequality, Lemma \ref{lemma_measure of ball in Grushin spaces}, the H\"{o}lder inequality and the Sobolev-Poincar\'{e}  inequality (see \cite[Theorem I]{Franchi1}), we have
    \begin{equation*}
        \begin{aligned}
            &|\phi(x)-(\phi)_{B_0(x)}|   \\
            \le&\sum^\infty_{i=0}|(\phi)_{B_i(x)}-(\phi)_{B_{i+1}(x)}|    \\
            \le&\sum^\infty_{i=0}\dashint_{B_{i+1}(x)}  | \phi  (z) -(\phi)_{B_i(x)}|\mathrm{d}z   \\
            \le&c\sum^\infty_{i=0}\dashint_{B_{i }(x)}  | \phi (z) -(\phi)_{B_i(x)}|\mathrm{d}z   \\
            \le&c\sum^\infty_{i=0}\Big(\dashint_{B_{i }(x)}  | \phi (z) -(\phi)_{B_i(x)}|^q\mathrm{d}z\Big)^{\frac{1}{q}}   \\
            \le&c\sum^\infty_{i=0}2^{-i }d_C(x,y)\Big(\dashint_{B_{i }(x)}  \left| {\nabla _{\mathbb{G}^{n}}\phi(z)} \right|^p\mathrm{d}z\Big)^{\frac{1}{p}}   \\
            \le&c \mathop{\sup}_{B\left( {x_{0},3r/2} \right)}\left| {\nabla_{\mathbb{G}^{n}}\phi} \right| {d_C(x,y)}
        \end{aligned}
    \end{equation*} with $1\le q\le p<\infty.$
    We also use the similar arguments to obtain
    \begin{equation*}\begin{aligned}|\phi(x)-(\phi)_{B_0(y)}|&\le   c \mathop{\sup}_{B\left( {x_{0},3r/2} \right)}\left| {\nabla
                _{\mathbb{G}^{n}}\phi} \right|
            {d_C(x,y)}.
        \end{aligned}
    \end{equation*}
    Furthermore, we have
    \begin{equation*}
        \begin{aligned}
            |(\phi)_{B_0(x)}-(\phi)_{B_0(y)}|&\le |(\phi)_{B_0(x)}-(\phi)_{2B_0(x)}|+|(\phi)_{2B_0(x)}-(\phi)_{B_0(y)}|\\
            &\le c \dashint_{2B_{0}(x)}  | \phi (z) -(\phi)_{2B_0(x)}|\mathrm{d}z\\
            &\le c d_C(x,y)\Big(\dashint_{2B_{0}(x)}  \left| {\nabla _{\mathbb{G}^{n}}\phi(z)} \right|^p\mathrm{d}z\Big)^{\frac{1}{p}}\\
            &\le c \mathop{\sup}_{B\left( {x_{0},3r/2} \right)}\left| {\nabla_{\mathbb{G}^{n}}\phi} \right| {d_C(x,y)}.
        \end{aligned}
    \end{equation*}
    Therefore, (\ref{3.19}) can be deduced by combining the above three inequalities.

    Note that for any  $x\in B(x_0,2r)$,  the triangle inequality implies that $B(x_0,2r)\subseteq B(x,4r)$. Then by the estimate on $\left| {\nabla_{\mathbb{G}^{n}}\phi} \right|$ in (\ref{3.19}), we obtain
    \begin{align}\label{complex eq2 in proof of Logarithmic Lemma}
        &\int_{B_{2r}}\int_{B_{2r}} {\left| {d_{C}(x,y)}\right|^{-Q-sp}\left| {\phi(x)-\phi(y)} \right|^p}\mathrm{d}x\mathrm{d}y   \notag\\
        \le&cr^{-p}\int_{B_{2r}}\int_{B (x,{4r})} {\left| {d_{C}(x,y)}\right|^{-Q-sp+p}}\mathrm{d}x\mathrm{d}y   \notag\\
        \le&cr^{-p}\int_{B_{2r}}\sum_{j=0}^{\infty}\int_{\left\{ {x\big|2^{-j+1}r>d_{C}\left( {x,y} \right)\ge2^{-j}r} \right\}}
        {\left| {d_{C}(x,y)}\right|^{-Q-sp+p}}\mathrm{d}y\mathrm{d}x   \notag\\
        \le&cr^{-p}\int_{B_{2r}}\sum_{j=0}^{\infty} {\left( {2^{-j}r} \right)^{-Q-sp+p}\left| {B\left( {x,2^{-j}r} \right)} \right|}\mathrm{d}y   \notag\\
        \le&cr^{-p}\left| {B\left( {x_0,2r} \right)} \right|\sum_{j=0}^{\infty} \left( {2^{-j}r} \right)^{-Q-sp+p}\left| {B\left( {x_0,2^{-j}r} \right)} \right|   \notag\\
        \le&cr^{2n-Q-sp}\left( {\left| {x_0} \right|_h+2r} \right)^{2m(n-h)}\sum_{j=0}^{\infty}\left( {2^{-j}} \right)^{-Q+n-sp+p}   \notag\\
        \le&cr^{2n-Q-sp}\left( {\left| {x_0} \right|_h+2r}
        \right)^{2m(n-h)},
    \end{align}
    where $c\equiv c(n,h,p,s,m)$ and $-Q+n-sp+p>0$, which means $p>m(n-h)$ and $0<s<1-m(n-h)/p$. By integrating \eqref{complex eq1 in proof of Logarithmic Lemma} and \eqref{complex eq2 in proof of Logarithmic Lemma} with the previously stated conditions on $p$ and $s$, we establish that Theorem \ref{lemma_Logarithmic Lemma} holds for $p>\max\left\{ {m(n-h),1} \right\}$, thereby concluding the proof.
\end{proof}

Motivated by Lemma 2.2 in \cite{dyda2023fractional}, we provide the fractional Poincar{\'e} type inequality for Grushin-type spaces, which will play a crucial role in subsequent proofs.
\begin{lemma}\label{lemma_Poincare inequality}
    Let $p>1$ and $s \in (0,1)$. For a ball $B_r\equiv B\left( {x_0,r} \right)\subset\Omega\subset\mathbb{G}^n$
    and any $v\in C^\infty_0(B\left( {x_0,r} \right))$,    we have
    \begin{equation}\label{Poincare inequality}
        \begin{aligned}
            &\dashint_{B_r} {\left| {v-(v)_{B_{r}}} \right|^p}\mathrm{d}x   \\
            \le&cr^{-2n+Q+sp}\left( {\left| {x_0} \right|_h+r} \right)^{-2m(n-h)}\int_{B_{r}}\int_{B_{r}} {\left| {d_{C}(x,y)}
                \right|^{-Q-sp}\left| {v(x)-v(y)}
                \right|^p}\mathrm{d}x\mathrm{d}y,
        \end{aligned}
    \end{equation}
    where $c$ depends only on $n,h,p,s$ and $m$.
\end{lemma}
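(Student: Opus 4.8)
The plan is to follow the classical route to fractional Poincar\'e inequalities, adapted from Lemma 2.2 in \cite{dyda2023fractional}: first bound the oscillation around the mean by a symmetric double average via Jensen's inequality, then artificially insert the singular kernel $|d_C(x,y)|^{-Q-sp}$ by exploiting that $d_C$ stays bounded on $B_r\times B_r$, and finally convert the normalised averages into the Gagliardo seminorm using the explicit two-sided measure estimate of Lemma \ref{lemma_measure of ball in Grushin spaces}.

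Concretely, for $x\in B_r$ one writes $v(x)-(v)_{B_r}=\dashint_{B_r}\bigl(v(x)-v(y)\bigr)\,\mathrm{d}y$ and applies Jensen's inequality to the convex map $t\mapsto|t|^p$, followed by integration in $x$ and Fubini, to get
\[
\dashint_{B_r}\bigl|v-(v)_{B_r}\bigr|^p\,\mathrm{d}x\le\dashint_{B_r}\dashint_{B_r}\bigl|v(x)-v(y)\bigr|^p\,\mathrm{d}x\,\mathrm{d}y .
\]
Since $x,y\in B(x_0,r)$ forces $d_C(x,y)<2r$ by the triangle inequality, we have $1\le(2r)^{Q+sp}\,|d_C(x,y)|^{-Q-sp}$; inserting this into the last integral and rewriting $\dashint_{B_r}\dashint_{B_r}=|B_r|^{-2}\int_{B_r}\int_{B_r}$ yields
\[
\dashint_{B_r}\bigl|v-(v)_{B_r}\bigr|^p\,\mathrm{d}x\le\frac{(2r)^{Q+sp}}{|B_r|^{2}}\int_{B_r}\int_{B_r}|d_C(x,y)|^{-Q-sp}\bigl|v(x)-v(y)\bigr|^p\,\mathrm{d}x\,\mathrm{d}y .
\]
By Lemma \ref{lemma_measure of ball in Grushin spaces} one has $|B_r|^2\sim r^{2n}(|x_0|_h+r)^{2m(n-h)}$, and substituting this leaves the prefactor $r^{Q+sp-2n}(|x_0|_h+r)^{-2m(n-h)}=r^{-2n+Q+sp}(|x_0|_h+r)^{-2m(n-h)}$, which is exactly the asserted constant once the numerical factor $2^{Q+sp}$ is absorbed into $c=c(n,h,p,s,m)$. (The argument in fact never uses $v\in C_0^\infty(B_r)$ — any $v$ making the right-hand side finite works — so the smoothness hypothesis is harmless to keep as stated.)

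No genuine analytic obstacle arises; the only point demanding care is the bookkeeping of the homogeneous-dimension powers and of the centre-dependent weight $(|x_0|_h+r)^{m(n-h)}$ produced by Lemma \ref{lemma_measure of ball in Grushin spaces}. This weight is precisely what distinguishes the Grushin-type statement from the Euclidean one: when $m=0$ or $n=h$ we have $|B_r|\sim r^n$ and $Q=n$, the weight disappears, and the estimate collapses to the familiar $r^{-n+sp}$ fractional Poincar\'e inequality. One should also verify that the constant is independent of $r$ and $x_0$ and that the almost-everywhere definition of $(v)_{B_r}$ causes no issue, both of which are routine.
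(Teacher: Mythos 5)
Your proposal is correct and is essentially the paper's own proof: Jensen's inequality to pass to the double average, insertion of the kernel via $d_C(x,y)<2r$ on $B_r\times B_r$, and conversion of the two normalising factors $|B_r|^{-1}$ into $r^{-2n}(|x_0|_h+r)^{-2m(n-h)}$ by Lemma \ref{lemma_measure of ball in Grushin spaces}. No differences worth noting.
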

\begin{proof}
    Using Lemma \ref{lemma_measure of ball in Grushin spaces} and  Jensen's inequality, we obtain
    \begin{equation*}
        \begin{aligned}
            &\dashint_{B_r} {\left| {v-(v)_{B_{r}}} \right|^p}\mathrm{d}x   \\
            \le&\dashint_{B_r}\dashint_{B_r} {\left| {v(x)-v(y)} \right|^p}\mathrm{d}x\mathrm{d}y   \\
            \le&cr^{Q+sp}\dashint_{B_r}\dashint_{B_r} {\left| {d_{C}(x,y)}\right|^{-Q-sp}\left| {v(x)-v(y)} \right|^p}\mathrm{d}x\mathrm{d}y   \\
            \le&cr^{-2n+Q+sp}\left( {\left| {x_0} \right|_h+r} \right)^{-2m(n-h)}\int_{B_r}\int_{B_r} {\left| {d_{C}(x,y)}\right|^{-Q-sp}\left| {v(x)-v(y)} \right|^p}\mathrm{d}x\mathrm{d}y,
        \end{aligned}
    \end{equation*}
    where $c\equiv c(n,h,p,s,m)$, which completes the proof.
\end{proof}

We conclude this section by presenting a significant consequence of the Logarithmic Lemma, which will be highly instrumental in Section \ref{section_Proof of the Hölder continuity}.
\begin{corollary}\label{corollary_corollary of logarithmic lemma}
    Let $p\in (1,+\infty)$, $s\in(0,1)$ and let $u\in W^{s,p}\left( {{\mathbb{G}}^{n}} \right)$ be the solution to   \eqref{integro-differential problem} such that $u\ge0$ in $B_R\equiv B\left( {x_0,R} \right)\subset\Omega$. Let $a,d>0,$ $b>1$ and define
    \begin{equation*}
        v:=\min\left\{ {\left( {\log(a+d)-\log(u+d)} \right)_+,\mathrm{log}(b)} \right\}.
    \end{equation*}
    Then the following estimate
    \begin{equation*}
        \begin{aligned}
            &\dashint_{B_r} {\left| {v-(v)_{B_{r}}} \right|^p}\mathrm{d}x   \\
            &\le c\left( {\left| {x_0} \right|_h+r} \right)^{-m(n-h)}\left\{ {d^{1-p}\left( {\frac{r}{R}}
                \right)^{sp}r^{Q-n}\left[ {\mathrm{Tail}\left( {u_{-};x_{0},R} \right)}
                \right]^{p-1}+\left( {\left| {x_0} \right|_h+2r} \right)^{m(n-h)}}
            \right\}
        \end{aligned}
    \end{equation*}holds for any $B_r\equiv B\left( {x_0,r} \right)\subset B\left( {x_0,R/2} \right)$,
    where $c$ depends only on $n,h,p,s$ and $m$.
\end{corollary}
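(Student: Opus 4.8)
The plan is to obtain the corollary by concatenating two facts already available: the fractional Poincaré inequality of Lemma~\ref{lemma_Poincare inequality}, applied to $v$, and the Logarithmic Lemma (Theorem~\ref{lemma_Logarithmic Lemma}), applied to $u$. The bridge between them is the elementary observation that $v$ is a $1$-Lipschitz function of $\log(u+d)$. Concretely, writing $\psi(x):=\log(u(x)+d)$ and $F(t):=\min\{(\log(a+d)-t)_+,\log b\}$, one has $v=F\circ\psi$; since $F$ is a composition of the isometry $t\mapsto\log(a+d)-t$ with the two $1$-Lipschitz maps $t\mapsto t_+$ and $t\mapsto\min\{t,\log b\}$, it is $1$-Lipschitz, so that
\begin{equation*}
|v(x)-v(y)|\le|\psi(x)-\psi(y)|=\left|\log\frac{u(x)+d}{u(y)+d}\right|\qquad\text{for a.e. }x,y\in\mathbb{G}^n .
\end{equation*}
Moreover $0\le v\le\log b$, so $v$ is bounded and in particular lies in $L^p(B_r)$, which makes $(v)_{B_r}$ and the left-hand side of the asserted inequality meaningful.

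Next I would apply the Poincaré-type bound to $v$. Although Lemma~\ref{lemma_Poincare inequality} is stated for $v\in C_0^\infty(B_r)$, its proof uses only Jensen's inequality, the measure estimate of Lemma~\ref{lemma_measure of ball in Grushin spaces} and the trivial fact $d_C(x,y)<2r$ on $B_r\times B_r$; hence it is valid for every bounded measurable function on $B_r$, in particular for our $v$. Combining it with the pointwise bound above and then invoking \eqref{Logarithmic Lemma} (legitimate since $u\ge0$ in $B_R$ and $B_r\subset B(x_0,R/2)$), I obtain
\begin{align*}
\dashint_{B_r}|v-(v)_{B_r}|^p\,\mathrm{d}x
&\le c\,r^{-2n+Q+sp}\left(|x_0|_h+r\right)^{-2m(n-h)}\int_{B_r}\int_{B_r}|d_C(x,y)|^{-Q-sp}\left|\log\frac{u(x)+d}{u(y)+d}\right|^p\,\mathrm{d}x\,\mathrm{d}y\\
&\le c\left(|x_0|_h+r\right)^{-2m(n-h)}\left(|x_0|_h+2r\right)^{m(n-h)}\left\{d^{1-p}\left(\tfrac{r}{R}\right)^{sp}r^{Q-n}\left[\mathrm{Tail}(u_-;x_0,R)\right]^{p-1}+\left(|x_0|_h+2r\right)^{m(n-h)}\right\},
\end{align*}
where in the second line the prefactors in $r$ cancel exactly, since $r^{-2n+Q+sp}\cdot r^{2n-Q-sp}=1$ while the surviving $r^{Q-n}$ sits inside the braces. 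Finally, using $|x_0|_h+r\le|x_0|_h+2r\le2(|x_0|_h+r)$ one replaces $(|x_0|_h+r)^{-2m(n-h)}(|x_0|_h+2r)^{m(n-h)}$ by $c(|x_0|_h+r)^{-m(n-h)}$, which is exactly the claimed estimate with $c=c(n,h,p,s,m)$.

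I do not expect a genuinely hard step here: the argument is a two-line composition of results already proved. The only points requiring care are (i) checking that the Poincaré inequality of Lemma~\ref{lemma_Poincare inequality} really does apply to the non-smooth, non-compactly supported $v$ — which is clear once one inspects its proof, but should be remarked explicitly — and (ii) the bookkeeping with the two competing factors $(|x_0|_h+r)^{-2m(n-h)}$ coming from the Poincaré inequality and $(|x_0|_h+2r)^{m(n-h)}$ coming from the Logarithmic Lemma, whose product must be collapsed to the single factor $(|x_0|_h+r)^{-m(n-h)}$ appearing in the statement. One should also keep in mind that Theorem~\ref{lemma_Logarithmic Lemma} is available only when $p>\max\{m(n-h),1\}$ and $s<1-m(n-h)/p$, so the corollary is effectively used in that range.
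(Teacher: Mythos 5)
Your proposal is correct and follows essentially the same route as the paper's proof: the pointwise bound $|v(x)-v(y)|\le|\log\frac{u(x)+d}{u(y)+d}|$, the fractional Poincar\'e inequality of Lemma~\ref{lemma_Poincare inequality} applied to $v$, the Logarithmic Lemma, and the collapse of $(|x_0|_h+r)^{-2m(n-h)}(|x_0|_h+2r)^{m(n-h)}$ via $\frac{|x_0|_h+2r}{|x_0|_h+r}\le 2$. Your closing remark about the parameter range is a fair observation (the corollary's hypotheses are weaker than those of Theorem~\ref{lemma_Logarithmic Lemma}, which the paper's proof silently invokes), but it does not change the argument.
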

\begin{proof}
    This corollary follows directly from Theorem \ref{lemma_Logarithmic Lemma}. Via the definition of the function $v$, we have
    \begin{equation*}
        \int_{B_r}\int_{B_r} {\left| {d_{C}(x,y)}\right|^{-Q-sp}\left| {v(x)-v(y)} \right|^p}\mathrm{d}x\mathrm{d}y\le\int_{B_r}
        \int_{B_r} {\left| {d_{C}(x,y)}\right|^{-Q-sp}\left| {\log\left( {\frac{u(y)+d}{u(x)+d}} \right)}
            \right|^p}\mathrm{d}x\mathrm{d}y.
    \end{equation*}
    By the fractional Poincar\'{e}-type inequality \eqref{Poincare inequality} in Lemma \ref*{lemma_Poincare inequality}, we apply the estimate in \eqref{Logarithmic Lemma} to deuce
    \begin{equation*}
        \begin{aligned}
            &\dashint_{B_r} {\left| {v-(v)_{B_{r}}} \right|^p}\mathrm{d}x   \\
            \le&cr^{-2n+Q+sp}\left( {\left| {x_0} \right|_h+r} \right)^{-2m(n-h)}\int_{B_{r}}\int_{B_{r}} {\left| {d_{C}(x,y)}\right|^{-Q-sp}\left| {\log\left( {\frac{u(y)+d}{u(x)+d}} \right)} \right|^p}\mathrm{d}x\mathrm{d}y   \\
            \le&c\left( {\left| {x_0} \right|_h+r} \right)^{-m(n-h)}{\left( {\frac{\left| {x_{0}} \right|_h+2r}{\left| {x_{0}} \right|_h+r}} \right)}^{m(n-h)}   \\
            &\times\left\{ {d^{1-p}{\left( {\frac{r}{R}} \right)}^{sp}r^{Q-n}\left[ {\mathrm{Tail}\left( {u_{-};x_{0},R} \right)} \right]^{p-1}+{\left( {\left| {x_{0}} \right|_h+2r} \right)}^{m(n-h)}} \right\}   \\
            \le&c\left( {\left| {x_0} \right|_h+r} \right)^{-m(n-h)}\left\{ {d^{1-p}{\left( {\frac{r}{R}} \right)}^{sp}r^{Q-n}\left[ {\mathrm{Tail}
                    \left( {u_{-};x_{0},R} \right)} \right]^{p-1}+{\left( {\left| {x_{0}} \right|_h+2r} \right)}^{m(n-h)}}
            \right\},
        \end{aligned}
    \end{equation*}
    where we have used the fact that $\frac{\left| {x_{0}} \right|_h+2r}{\left| {x_{0}} \right|_h+r}\le2$. This completes the proof.
\end{proof}
\end{section}

\begin{section}{Proof of the Local Boundedness}\label{section_Proof of the Local Boundedness}
The objective of this section is to establish the local boundedness result for the fractional $p$-minimizers of the functional \eqref{F function in sec2} in Theorem \ref{theo_local boundedness}. Combining precise estimates involving the nonlocal tail of the solutions with the Caccioppoli inequality demonstrated in Section \ref{section_Fundamental estimates}, we establish the local boundedness results on the Grushin-type space; see \cite{di2016local} for the corresponding problem in the Euclidean case and \cite{manfredini2023holder} for the Heisenberg group case.

\textbf{Proof of Theorem \ref{theo_local boundedness}}.
Firstly, it is necessary to define several key quantities. For any $j\in\mathbb{N}$ and $r>0$ such that
$B\left( {x_{0},r} \right)\subset\Omega$, denote by
\begin{equation*}
    \begin{gathered}
        r_j=\frac{1}{2}\left( {1+2^{-j}}
        \right)r,\quad\tilde{r}_j=\frac{r_j+r_{j+1}}{2},\quad
        B_j=B\left( {x_0,r_j} \right),\quad\tilde{B}_j=B\left( {x_0,\tilde{r}_j} \right).
    \end{gathered}
\end{equation*}
Moreover, take
\begin{equation*}
    \begin{gathered}
        \phi_j\in C_{0}^{\infty}\left( {\tilde{B}_j} \right) ,\quad0\le\phi_j\le1,\quad\phi_j\equiv1\,\,
        \mathrm{on}\,\,B_{j+1},\,\,\mathrm{and}\,\,\left| {\nabla_{\mathbb{G}^{n}}\phi_j} \right|<2^{j+3}/r;  \\
        k_j=k+\left( {1-2^{-j}} \right)\tilde{k},\quad\tilde{k}_j=\frac{k_{j+1}+k_j}{2},\quad\tilde{k}\in\mathbb{R}^{+}\,
        \,\mathrm{and}\,\,k\in\mathbb{R}; \\
        \tilde{w}_j=\left( {u-\tilde{k}_j} \right)_{+}\,\,\mathrm{and}\,\,w_j=\left( {u-k_j} \right)_{+}.
    \end{gathered}
\end{equation*}

Secondly, we divide the proof into two steps. We first focus on the subcritical case where $sp<2n-Q$.  We
apply the H\"{o}lder inequality and
the fractional Poincar\'{e} inequality to the function $\tilde{w}_j\phi_j$ as defined above to obtain
\begin{equation*}
    \begin{aligned}
        &\left( {\dashint_{B_j} {\left| {\tilde{w}_j(x)\phi_j(x)} \right|^{p^{*}}}\mathrm{d}x} \right)^{\frac{p}{p^{*}}}  \\
        \le&c\left[ {\left( {\dashint_{B_j} {\left| {\tilde{w}_j\phi_j-\left( {\tilde{w}_j\phi_j} \right)_{B_j}} \right|^{p^{*}}}\mathrm{d}x} \right)^{\frac{1}{p^{*}}}+\dashint_{B_j} {\tilde{w}_j(x)\phi_j(x)}\mathrm{d}x} \right]^p   \\
        \le&c\left [ {\left( {\dashint_{B_j} {\left| {\tilde{w}_j\phi_j-\left( {\tilde{w}_j\phi_j} \right)_{B_j}} \right|^{p^{*}}}\mathrm{d}x} \right)^{\frac{p}{p^{*}}}+\left( {\dashint_{B_j} {\tilde{w}_j(x)\phi_j(x)}\mathrm{d}x} \right)^p} \right]   \\
        \le&c\left[ {\frac{1}{\left| {B_j} \right|}\left( {\int_{B_j} {\left| {\tilde{w}_j\phi_j-\left( {\tilde{w}_j\phi_j} \right)_{B_j}} \right|^{p^{*}\cdot\frac{p}{p^*}}}\mathrm{d}x} \right)^{\frac{p^*}{p}}\left| {B_j} \right|^{1-\frac{p^*}{p}}} \right]^\frac{p}{p^*}+c\left( {\dashint_{B_j} {\tilde{w}_j(x)\phi_j(x)}\mathrm{d}x} \right)^p   \\
        =&c\dashint_{B_j} {\left| {\tilde{w}_j\phi_j-\left( {\tilde{w}_j\phi_j} \right)_{B_j}} \right|^{p}}\mathrm{d}x+c\left( {\dashint_{B_j} {\tilde{w}_j(x)\phi_j(x)}\mathrm{d}x} \right)^p   \\
        \le&cr_j^{-2n+Q+sp}\left( {\left| {x_0} \right|_h+r_j} \right)^{-2m(n-h)}\int_{B_{j}}\int_{B_{j}} {\left| {d_{C}(x,y)}\right|^{-Q-sp}\left| {\tilde{w}_j(x)\phi_j(x)-\tilde{w}_j(y)\phi_j(y)} \right|^p}\mathrm{d}x\mathrm{d}y   \\
        &+c\left( {\dashint_{B_j} {\tilde{w}_j(x)\phi_j(x)}\mathrm{d}x} \right)^p,
    \end{aligned}
\end{equation*}
where $p^{*}=(2n-Q)p/(2n-Q-sp)<p$. Utilizing the nonlocal
Caccioppoli inequality with the tail given by \eqref{Caccioppoli
estimates with tail}, with $w_{+}=\tilde{w}_j$ and $\phi=\phi_j$, we
arrive at \begin{equation}\label{I1+I2+I3 in proof of local
boundedness}
    \begin{aligned}
        &\left( {\dashint_{B_j} {\left| {\tilde{w}_j(x)\phi_j(x)} \right|^{p^{*}}}\mathrm{d}x} \right)^{\frac{p}{p^{*}}}   \\
        \le&cr_j^{-n+Q+sp}\left( {\left| {x_0} \right|_h+r_j} \right)^{-m(n-h)}   \\
        &\times\dashint_{B_{j}}\int_{B_{j}} {\left| {d_{C}(x,y)} \right|^{-Q-sp}{\left( {\max\left\{ {\tilde{w}_{j}(x),\tilde{w}_{j}(y)} \right\}} \right)}^{p}\left| {\phi_j(x)-\phi_j(y)} \right|^{p}}\mathrm{d}x\mathrm{d}y   \\
        &+cr_j^{-n+Q+sp}\left( {\left| {x_0} \right|_h+r_j} \right)^{-m(n-h)}   \\
        &\times\dashint_{B_{r}} {\tilde{w}_{j}(y)\phi^{p}_j(y)\left( {\mathop{\sup}_{y\in \mathrm{{supp}\,\phi_j}}\int_{{\mathbb{G}}^{n}\backslash B_{j}} {\left| {d_{C}(x,y)} \right|^{-Q-sp}\tilde{w}_{j}^{p-1}(x)}\mathrm{d}x} \right)}\mathrm{d}y   \\
        &+c\dashint_{B_j} {\left| {\tilde{w}_{j}(x)\phi_j(x)} \right|^p}\mathrm{d}x   \\
        =:&I_1+I_2+I_3.
    \end{aligned}
\end{equation}

We begin by estimating the first term $I_1$ on the right-hand side of \eqref{I1+I2+I3 in proof of local boundedness}.
Noting that $B_j\subseteq B(y,2r_j)$ for $y\in B_j$ and using the
same strategy used in the proof of the Theorem 1.4, we
obtain
\begin{equation}\label{I1 estimate in proof of local boundedness}
    \begin{aligned}
        I_1\le&c2^{jp}r_j^{-n+Q+sp-p}\left( {\left| {x_0} \right|_h+r_j} \right)^{-m(n-h)}\dashint_{B_{j}} {w_{j}^p(y)\left( {\int_{B_{j}} {\left| {d_{C}(x,y)} \right|^{-Q-sp+p}}\mathrm{d}x} \right)}\mathrm{d}y   \\
        \le&c2^{jp}\dashint_{B_{j}} {w_{j}^p(x)}\mathrm{d}x.
    \end{aligned}
\end{equation}

As for the second term $I_2$ on the right in \eqref{I1+I2+I3 in proof of local boundedness}, we note that
$\tilde{w}_j\le w_j^p/\left( {\tilde{k}_j-k_j} \right)^{p-1}$. Moreover, since $y\in\mathrm{supp}\,\phi_j\subseteq\tilde{B}_j$ and $x\in\mathbb{G}^n\backslash B_j$, we have
\begin{equation*}
    \frac{d_C\left( {x,x_0} \right)}{d_C\left( {x,y} \right)}\le\frac{d_C\left( {x_0,y} \right)+d_C\left( {x,y} \right)}{d_C\left( {x,y} \right)}=1+\frac{d_C\left( {x_0,y} \right)}{d_C\left( {x,y} \right)}\le1+\frac{\tilde{r}_j}{r_j-\tilde{r}_j}\le c2^j.
\end{equation*}
Thus,
\begin{equation}\label{I2 estimate in proof of local boundedness}
    \begin{aligned}
        I_2\le&c2^{j(Q+sp)}r_j^{-n+Q+sp}\left( {\left| {x_0} \right|_h+r} \right)^{-m(n-h)}   \\
        &\times\left( {\dashint_{B_{r}} {\frac{w_{j}^p(y)}{\left( {\tilde{k}_j-k_j} \right)^{p-1}}}\mathrm{d}y} \right)\left( {\int_{{\mathbb{G}}^{n}\backslash B_{j}} {\frac{w_{j}^{p-1}(x)}{\left| {d_{C}(x,x_0)} \right|^{Q+sp}}}\mathrm{d}x} \right)   \\
        \le&c\frac{2^{j(Q+sp+p-1)}}{\tilde{k}^{p-1}r^{n-Q}\left( {\left| {x_0} \right|_h+r} \right)^{m(n-h)}}\left[ {\mathrm{Tail}\left( {w_{0};x_{0},r/2} \right)} \right]^{p-1}\dashint_{B_{j}} {w_{j}^p(y)}\mathrm{d}y.
    \end{aligned}
\end{equation}

Since $\tilde{w}_j\le w_j$ and $\phi\le1$, the third integral $I_3$ in \eqref{I1+I2+I3 in proof of local boundedness} can be estimated as    \begin{equation}\label{I3 estimate in proof of local boundedness}
    I_3\le c\dashint_{B_{j}} {w_{j}^p(x)}\mathrm{d}x.
\end{equation}
Regarding the left-hand side of \eqref{I1+I2+I3 in proof of local boundedness}, we obtain
\begin{equation}\label{left of I1+I2+I3 estimate in proof of local boundedness}
    \begin{aligned}
        &\left( {\dashint_{B_j} {\left| {\tilde{w}_j(x)\phi_j(x)} \right|^{p^{*}}}\mathrm{d}x} \right)^{\frac{p}{p^{*}}}   \\
        \ge&\left( {\frac{\left| {B_{j+1}} \right|}{\left| {B_j} \right|}\dashint_{B_{j+1}} {\left| {\tilde{w}_j(x)\phi_j(x)} \right|^{p^{*}}}\mathrm{d}x} \right)^{\frac{p}{p^{*}}}   \\
        \ge&c\left( {\frac{r_{j+1}^n\left( {\left| {x_0} \right|_h+r_{j+1}} \right)^{m(n-h)}}{r_{j}^n\left( {\left| {x_0} \right|_h+r_{j}} \right)^{m(n-h)}}\dashint_{B_{j+1}} {\left| {\tilde{w}_j(x)\phi_j(x)} \right|^{p^{*}}}\mathrm{d}x} \right)^{\frac{p}{p^{*}}}   \\
        \ge&c\left( {\frac{r_{j+1}}{r_j}} \right)^{n-sp}\left( {\frac{\left| {x_0} \right|_h+r_{j+1}}{\left| {x_0} \right|_h+r_{j}}} \right)^{\frac{m(n-h)p}{p^{*}}}\left( {k_{j+1}-\tilde{k}_j} \right)^{\frac{\left( {p^{*}-p} \right)p}{p^{*}}}\left( {\dashint_{B_{j+1}} {w^{p}_{j+1}(x)}\mathrm{d}x} \right)^{\frac{p}{p^{*}}}   \\
        \ge&c\left( {\frac{\tilde{k}}{2^{j+2}}} \right)^{\frac{\left( {p^{*}-p} \right)p}{p^{*}}}\left( {\dashint_{B_{j+1}} {w^{p}_{j+1}(x)}\mathrm{d}x} \right)^{\frac{p}{p^{*}}},
    \end{aligned}
\end{equation}
where we have used the fact that  $\tilde{w}_j^{p^{*}}\ge\left( {k_{j+1}-\tilde{k}_j} \right)^{p^{*}-p}w_{j+1}^{p}$. Combining
\eqref{I1+I2+I3 in proof of local boundedness} with \eqref{I1 estimate in proof of local boundedness},
\eqref{I2 estimate in proof of local boundedness}, \eqref{I3 estimate in proof of local boundedness} and
\eqref{left of I1+I2+I3 estimate in proof of local
    boundedness} deduces
\begin{equation*}
    \left( {\frac{\tilde{k}^{1-p/p^{*}}}{2^{(j+2)\frac{\left( {p^{*}-p} \right)}{p^{*}}}}} \right)^pA_{j+1}^{\frac{p^2}{p^{*}}}\le c2^{j(Q+sp+p-1)}\left( {2+\frac{\left[ {\mathrm{Tail}\left( {w_{0};x_{0},r/2} \right)} \right]^{p-1}}{\tilde{k}^{p-1}r^{n-Q}\left( {\left| {x_0} \right|_h+r} \right)^{m(n-h)}}} \right)A_j^p,
\end{equation*}
where   $A_j:=\left( {\dashint_{B_{j}} {w_{j}^p(x)}\mathrm{d}x} \right)^{\frac{1}{p}}$.

By choosing
\begin{equation}\label{k require1 in proof of local boundedness}
    \tilde{k}\ge\delta\frac{\mathrm{Tail}\left( {w_{0};x_{0},r/2} \right)}{r^{\frac{n-Q}{p-1}}\left( {\left| {x_0} \right|_h+r} \right)^{\frac{m(n-h)}{p-1}}},\quad\delta\in(0,1],
\end{equation}
we obtain
\begin{equation}\label{iteration1 eq1 in proof of local boundedness}
    \left( {\frac{A_{j+1}}{\tilde{k}}} \right)^{\frac{p}{p^{*}}}\le\delta^{\frac{1-p}{p}}\bar{c}^{\frac{p}{p^{*}}}
    2^{j\left( {\frac{Q+sp+p-1}{p}+\frac{sp}{Q}}
        \right)}\frac{A_{j}}{\tilde{k}},
\end{equation}
where $\bar{c}=2^{\frac{2\left( {p^{*}-p} \right)}{p}}3^{\frac{p^{*}}{p^2}}c^{\frac{p^{*}}{p^2}}$.
Let
\begin{equation*}
    \beta:=\frac{sp}{2n-Q-sp}=\frac{p^{*}}{p}-1,\quad C:=2^{\frac{(2n-Q+sp+p-1)(2n-Q)}{p(2n-Q-sp)}+\frac{sp}{2n-Q-sp}}>1,
\end{equation*}
allowing the estimate in \eqref{iteration1 eq1 in proof of local boundedness} to be rewritten as
\begin{equation}\label{iteration1 eq2 in proof of local boundedness}
    \frac{A_{j+1}}{\tilde{k}}\le\delta^{\frac{p^{*}(p-1)}{p^2}}\bar{c}C^j\left( {\frac{A_{j}}{\tilde{k}}} \right)^{1+\beta}.
\end{equation}
Therefore, it is sufficient to require
\begin{equation*}
    \frac{A_0}{\tilde{k}}\le\delta^{\frac{p^{*}(p-1)}{p^2\beta}}\bar{c}^{-\frac{1}{\beta}}C^{-\frac{1}{\beta^2}},
\end{equation*}
and thus we obtain
\begin{equation*}
    \frac{A_j}{\tilde{k}}\le\delta^{\frac{p^{*}(p-1)}{p^2\beta}}\bar{c}^{-\frac{1}{\beta}}C^{-\frac{j\beta+1}{\beta^2}}
\end{equation*}
through the iteration relation \eqref{iteration1 eq2 in proof of local boundedness}, which yields $A_j\to0$ as $j\to\infty$. Since
\begin{equation*}
    \frac{p^{*}(p-1)}{p^2\beta}=\frac{(2n-Q)(p-1)}{sp^2},
\end{equation*}
we set
\begin{equation*}
    \tilde{k}=\delta \frac{\mathrm{Tail} \left( {w_{0};x_{0},r/2} \right)}{r^{\frac{n-Q}{p-1}}\left( {{\left| {x_{0}} \right|_h}+r} \right)^{\frac{m(n-h)}{p-1}}}+c\delta ^{-\frac{(2n-Q)(p-1)}{sp^2}}HA_0,\quad H=\bar{c}^{\frac{1}{\beta}}C^{\frac{1}{\beta^2}},
\end{equation*}
which coincides with \eqref{k require1 in proof of local boundedness}. It follows that
\begin{equation*}
    \begin{aligned}
        \mathop{\sup}_{B\left( {x_{0},r/2} \right)} u\le&\lim_{j\to\infty}k_j   \\
        &=k+\tilde{k}   \\
        &\le k+\delta \frac{\mathrm{Tail} \left( {(u-k)_+;x_{0},r/2} \right)}{r^{\frac{n-Q}{p-1}}\left( {{\left| {x_{0}} \right|_h}+r} \right)^{\frac{m(n-h)}{p-1}}}+\delta^{-\frac{(2n-Q)(p-1)}{sp^2}}H{\left( {\dashint_{B_r} {(u-k)_{+}^{p}}\mathrm{d}x} \right)}^{\frac{1}{p}},
    \end{aligned}
\end{equation*}
which yields the desired result by choosing $k=0$. Thus, the proof is complete for the case $sp<2n-Q$.

We now turn to the remaining case:  $sp=2n-Q$. We use the argument of
the specific procedure  provided in \cite{manfredini2023holder}  to complete the proof. Choose $0<s_1<s<1$, in particular, $\tilde{w}_j\phi_j-\left( {\tilde{w}_j\phi_j} \right)_{B_j}\in W^{s,p}\left( {B_j} \right)\subseteq W^{s_1,p}\left( {B_j} \right)$. Clearly, $s_1p<sp=2n-Q$, and we are justified in applying the H{\"o}lder inequality, which yields, for any $p<q<p^{*}_1=\frac{(2n-Q)p}{2n-Q-s_1p}$,
\begin{equation*}
    \left\| {\tilde{w}_j\phi_j-\left( {\tilde{w}_j\phi_j} \right)_{B_j}} \right\|_{L^q\left( {B_j} \right)}\le\left| {B_j} \right|^{\frac{p_1^{*}-q}{qp^{*}_1}}\left\| {\tilde{w}_j\phi_j-\left( {\tilde{w}_j\phi_j} \right)_{B_j}} \right\|_{L^{p_1^{*}}\left( {B_j} \right)}.
\end{equation*}
Thus, by the fractional Poincar{\'e} type inequality in Lemma \ref{lemma_Poincare inequality} we can express the result as
\begin{equation}\label{1st step of sp=n in proof of local boundedness}
    \begin{aligned}
        &\left( {\dashint_{B_j} {\left| {\tilde{w}_j(x)\phi_j(x)} \right|^{q}}\mathrm{d}x} \right)^{\frac{p}{q}}  \\
        \le&c\left[ {\left( {\dashint_{B_j} {\left| {\tilde{w}_j\phi_j-\left( {\tilde{w}_j\phi_j} \right)_{B_j}} \right|^{q}}\mathrm{d}x} \right)^{\frac{1}{q}}+\dashint_{B_j} {\tilde{w}_j(x)\phi_j(x)}\mathrm{d}x} \right]^p   \\
        \le&c\left [ {\left( {\dashint_{B_j} {\left| {\tilde{w}_j\phi_j-\left( {\tilde{w}_j\phi_j} \right)_{B_j}} \right|^{q}}\mathrm{d}x} \right)^{\frac{p}{q}}+\left( {\dashint_{B_j} {\tilde{w}_j(x)\phi_j(x)}\mathrm{d}x} \right)^p} \right]   \\
        \le&c\left[ {\left| {B_j} \right|^{\frac{\left( {p_1^{*}-q} \right)p}{qp_1^{*}}-\frac{p}{q}}\left( {\int_{B_j} {\left| {\tilde{w}_j\phi_j-\left( {\tilde{w}_j\phi_j} \right)_{B_j}} \right|^{p_1^{*}}}\mathrm{d}x} \right)^{\frac{p}{p_1^{*}}}+\left( {\dashint_{B_j} {\tilde{w}_j(x)\phi_j(x)}\mathrm{d}x} \right)^p} \right]   \\
        \le&c\left[ {\frac{1}{\left| {B_j} \right|}\left( {\int_{B_j} {\left| {\tilde{w}_j\phi_j-\left( {\tilde{w}_j\phi_j} \right)_{B_j}} \right|^{p_1^{*}\cdot\frac{p}{p_1^*}}}\mathrm{d}x} \right)^{\frac{p_1^*}{p}}\left| {B_j} \right|^{1-\frac{p_1^*}{p}}} \right]^\frac{p}{p_1^*}+c\left( {\dashint_{B_j} {\tilde{w}_j(x)\phi_j(x)}\mathrm{d}x} \right)^p   \\
        =&c\dashint_{B_j} {\left| {\tilde{w}_j\phi_j-\left( {\tilde{w}_j\phi_j} \right)_{B_j}} \right|^{p}}\mathrm{d}x+c\left( {\dashint_{B_j} {\tilde{w}_j(x)\phi_j(x)}\mathrm{d}x} \right)^p   \\
        \le&c\left( {\dashint_{B_j} {\tilde{w}_j(x)\phi_j(x)}\mathrm{d}x} \right)^p+cr_j^{-2n+Q+s_1p}\left( {\left| {x_0} \right|_h+r_j} \right)^{-2m(n-h)}   \\
        &\times\int_{B_j}\int_{B_j} {\left| {d_{C}(x,y)}\right|^{-n-s_1p}\left| {\tilde{w}_j(x)\phi_j(x)-\tilde{w}_j(y)\phi_j(y)} \right|^p}\mathrm{d}x\mathrm{d}y.   \\
    \end{aligned}
\end{equation}

We are now in a position to apply the nonlocal Caccioppoli-type inequality in \eqref{1st step of sp=n in proof of local boundedness}, as previously done in the subcritical case. Similarly, by combining the above inequality, we obtain
\begin{align}\label{J1+J2+J3 in proof of local boundedness}
        &\left( {\dashint_{B_j} {\left| {\tilde{w}_j(x)\phi_j(x)} \right|^{q}}\mathrm{d}x} \right)^{\frac{p}{q}}   \notag\\
        \le&cr_j^{-n+Q+s_1p}\left( {\left| {x_0} \right|_h+r_j} \right)^{-m(n-h)}   \notag\\
        &\times\dashint_{B_{j}}\int_{B_{j}} {\left| {d_{C}(x,y)} \right|^{-Q-s_1p}{\left( {\max\left\{ {\tilde{w}_{j}(x),\tilde{w}_{j}(y)} \right\}} \right)}^{p}\left| {\phi_j(x)-\phi_j(y)} \right|^{p}}\mathrm{d}x\mathrm{d}y   \notag\\
        &+cr_j^{-n+Q+s_1p}\left( {\left| {x_0} \right|_h+r_j} \right)^{-m(n-h)}   \notag\\
        &\times\dashint_{B_{r}} {\tilde{w}_{j}(y)\phi^{p}_j(y)\left( {\mathop{\sup}_{y\in \mathrm{{supp}\,\phi_j}}\int_{{\mathbb{G}}^{n}\backslash B_{j}} {\left| {d_{C}(x,y)} \right|^{-Q-s_1p}\tilde{w}_{j}^{p-1}(x)}\mathrm{d}x} \right)}\mathrm{d}y   \notag\\
        &+c\dashint_{B_j} {\left| {\tilde{w}_{j}(x)\phi_j(x)} \right|^p}\mathrm{d}x   \notag\\
        :=&J_1+J_2+J_3.
\end{align}
The estimates for $J_1$, $J_2$ and $J_3$ in \eqref{J1+J2+J3 in proof of local boundedness} are analogous to those previously discussed in the subcritical case. As for $J_1$, by the definition of $\phi_j$, we have
\begin{equation}\label{J1 estimate in proof of local boundedness}
    \begin{aligned}
        J_1\le&c2^{jp}r_j^{-n+Q+s_1p-p}\left( {\left| {x_0} \right|_h+r_j} \right)^{-m(n-h)}\dashint_{B_{j}} {w_{j}^p(y)\left( {\int_{B_{j}} {\left| {d_{C}(x,y)} \right|^{-Q-s_1p+p}}\mathrm{d}x} \right)}\mathrm{d}y   \\
        \le&c2^{jp}r_j^{-n+Q+s_1p-p}\left( {\left| {x_0} \right|_h+r_j} \right)^{-m(n-h)}r_j^{n-Q+p-s_1p}\left( {\left| {x_0} \right|_h+r_j} \right)^{m(n-h)}\dashint_{B_{j}} {w_{j}^p(x)}\mathrm{d}x   \\
        \le&c2^{jp}\dashint_{B_{j}} {w_{j}^p(x)}\mathrm{d}x.
    \end{aligned}
\end{equation}
For $J_2$, we obtain
\begin{equation}\label{J2 estimate in proof of local boundedness}
    \begin{aligned}
        J_2\le&c2^{j(Q+s_1p)}r_j^{-n+Q+s_1p}\left( {\left| {x_0} \right|_h+r} \right)^{-m(n-h)}   \\
        &\times\left( {\dashint_{B_{r}} {\frac{w_{j}^p(y)}{\left( {\tilde{k}_j-k_j} \right)^{p-1}}}\mathrm{d}y} \right)\left( {\int_{{\mathbb{G}}^{n}\backslash B_{j}} {\frac{w_{j}^{p-1}(x)}{\left| {d_{C}(x,x_0)} \right|^{Q+s_1p}}}\mathrm{d}x} \right)   \\
        \le&c\frac{2^{j(Q+s_1p+p-1)}}{\tilde{k}^{p-1}r^{n-Q}\left( {\left| {x_0} \right|_h+r} \right)^{m(n-h)}}\left[ {\mathrm{Tail}\left( {w_{0};x_{0},r/2} \right)} \right]^{p-1}\dashint_{B_{j}} {w_{j}^p(y)}\mathrm{d}y.
    \end{aligned}
\end{equation}
And for $J_3$, it can be easily estimated as
\begin{equation}\label{J3 estimate in proof of local boundedness}
    J_3\le c\dashint_{B_{j}} {w_{j}^p(x)}\mathrm{d}x
\end{equation}
since $\tilde{w}_j\le w_j$ and $\phi\le1$. By combining \eqref{J1+J2+J3 in proof of local boundedness} with \eqref{J1 estimate in proof of local boundedness}, \eqref{J2 estimate in proof of local boundedness} and \eqref{J3 estimate in proof of local boundedness}, we obtain
\begin{equation}\label{J1+J2+J3 estimate in proof of local boundedness}
    \left( {\dashint_{B_j} {\left| {\tilde{w}_j(x)\phi_j(x)} \right|^{q}}\mathrm{d}x} \right)^{\frac{p}{q}}\le c2^{j(Q+s_1p+p-1)}\left( {2+\frac{\left[ {\mathrm{Tail}\left( {w_{0};x_{0},r/2} \right)} \right]^{p-1}}{\tilde{k}^{p-1}r^{n-Q}\left( {\left| {x_0} \right|_h+r} \right)^{m(n-h)}}} \right)\dashint_{B_{j}} {w_{j}^p(x)}\mathrm{d}x.
\end{equation}

Moreover, the term on the left-hand side of the inequality above can be estimated analogously to the subcritical case in \eqref{left of I1+I2+I3 estimate in proof of local boundedness}, as follows:
\begin{equation}\label{left of J1+J2+J3 estimate in proof of local boundedness}
    \left( {\dashint_{B_j} {\left| {\tilde{w}_j(x)\phi_j(x)} \right|^{q}}\mathrm{d}x} \right)^{\frac{p}{q}}\ge c\left( {\frac{\tilde{k}}{2^{j+2}}} \right)^{\frac{\left( {q-p} \right)p}{q}}\left( {\dashint_{B_{j+1}} {w^{p}_{j+1}(x)}\mathrm{d}x} \right)^{\frac{p}{q}}.
\end{equation}
Similarly, we set $A_j:=\left( {\dashint_{B_{j}} {w_{j}^p(x)}\mathrm{d}x} \right)^{\frac{1}{p}}$ and
choose $\tilde{k}$ as in \eqref{k require1 in proof of local boundedness}.
By combining \eqref{J1+J2+J3 estimate in proof of local boundedness} and
\eqref{left of J1+J2+J3 estimate in proof of local boundedness}, we
obtain
\begin{equation}\label{iteration2 eq2 in proof of local boundedness}
    \frac{A_{j+1}}{\tilde{k}}\le\delta^{\frac{q(p-1)}{p^2}}\bar{c}C^j\left( {\frac{A_{j}}{\tilde{k}}}
    \right)^{1+\beta},
\end{equation}
where
\begin{equation*}
    \bar{c}=2^{\frac{2\left( {q-p} \right)}{p}}3^{\frac{q}{p^2}}c^{\frac{q}{p^2}},
    C:=2^{\frac{(Q+s_1p+p-1)q}{p^2}+\frac{q-p}{p}}>1,
    \beta:=\frac{q}{p}-1.
\end{equation*}
As previously done in the subcritical case, it is sufficient to require
\begin{equation*}
    \frac{A_0}{\tilde{k}}\le\delta^{\frac{q(p-1)}{p^2\beta}}\bar{c}^{-\frac{1}{\beta}}C^{-\frac{1}{\beta^2}}
\end{equation*}
to obtain
\begin{equation*}
    \frac{A_j}{\tilde{k}}\le\delta^{\frac{q(p-1)}{p^2\beta}}\bar{c}^{-\frac{1}{\beta}}C^{-\frac{j\beta+1}{\beta^2}}
\end{equation*}
through the iteration relation \eqref{iteration2 eq2 in proof of local boundedness}, which yields $A_j\to0$ as $j\to\infty$. Since
\begin{equation*}
    \frac{q(p-1)}{p^2\beta}=\frac{q(p-1)}{p(q-p)},
\end{equation*}
we set
\begin{equation*}
    \tilde{k}=\delta \frac{\mathrm{Tail} \left( {w_{0};x_{0},r/2} \right)}{r^{\frac{n-Q}{p-1}}\left( {{\left| {x_{0}} \right|_h}+r} \right)^{\frac{m(n-h)}{p-1}}}+c\delta^{-\frac{q(p-1)}{p(q-p)}}HA_0,\quad H=\bar{c}^{-\frac{1}{\beta}}C^{\frac{1}{\beta^2}},
\end{equation*}
which is in accordance with \eqref{k require1 in proof of local boundedness}. We finally deduce that
\begin{equation*}
    \mathop{\sup}_{B\left( {x_{0},r/2} \right)} u\le k+\delta \frac{\mathrm{Tail} \left( {(u-k)_+;x_{0},r/2} \right)}{r^{\frac{n-Q}{p-1}}\left( {{\left| {x_{0}} \right|_h}+r} \right)^{\frac{m(n-h)}{p-1}}}+\delta^{-\frac{q(p-1)}{p(q-p)}}H{\left( {\dashint_{B_r} {(u-k)_{+}^{p}}\mathrm{d}x} \right)}^{\frac{1}{p}},
\end{equation*}
which gives the desired result by choosing $k=0$.

\end{section}

\begin{section}{Proof of the H{\"o}lder continuity}\label{section_Proof of the Hölder continuity}
This final section is devoted  to proving the H{\"o}lder continuity of solutions, specifically Theorem \ref{theo_holder continuity}. It is worth noting that all the estimates established in the previous sections will be utilized.

Before proceeding, we first give some notation. For any $j\in\mathbb{N}$, let $0<r<R/2$, for some $R$ such that $B\left( {x_0,R} \right)\subset \Omega$, let
\begin{equation*}
    r_j:=\sigma^j\frac{r}{2},\quad\sigma\in(0,1/4],\quad B_j:=B\left( {x_0,r_j} \right).
\end{equation*}
Moreover, we proceed by defining
\begin{equation*}
    \frac{1}{2}\omega\left( {r_0} \right):=\frac{1}{2}\omega\left( {\frac{r}{2}} \right) =\frac{\mathrm{Tail} \left( {u;x_{0},r/2} \right)}{r^{\frac{n-Q}{p-1}}\left( {{\left| {x_{0}} \right|_h}+r} \right)^{\frac{m(n-h)}{p-1}}}+c{\left( {\dashint_{B\left( {x_{0},r} \right)} {\left| {u} \right|^{p}}\mathrm{d}x} \right)}^{\frac{1}{p}},
\end{equation*}
with $\mathrm{Tail} \left( {u;x_{0},r/2} \right)$ as in
\eqref{nonlocal tail} an $c$ as in \eqref{local boundedness}, and
\begin{equation*}
    \omega\left( {r_j} \right):= \left( {\frac{r_j}{r_0}} \right)^{\alpha}\omega\left( {r_0} \right),\quad\mathrm{for}\,\,\mathrm{some}\,\,\alpha <\frac{sp}{p-1}.
\end{equation*}

To establish Theorem \ref{theo_holder continuity}, it suffices to prove the following lemma.
\begin{lemma}
    Using the notation introduced above, let $u\in W^{s,p}\left( {{\mathbb{G}}^{n}} \right)$ be the solution to \eqref{integro-differential problem in sec2}. Then
    \begin{equation}\label{osc of u estimate}
        \mathop{\mathrm{osc}}_{B_j}u\equiv\mathop{\sup}_{B_j}u-\mathop{\inf}_{B_j}u\le\omega\left( {r_j} \right) \quad\forall\,
        j=0,1,2,\dots.
    \end{equation}
\end{lemma}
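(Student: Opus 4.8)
\emph{The plan} is to prove \eqref{osc of u estimate} by induction on $j$. The base case $j=0$ follows from Theorem \ref{theo_local boundedness}: applying \eqref{local boundedness} to the weak subsolution $u$ and to the weak subsolution $-u$ on $B(x_0,r)$, adding the two estimates, using $u_{\pm}\le|u|$ and $\mathrm{Tail}(u_{+};x_0,r/2)+\mathrm{Tail}(u_{-};x_0,r/2)\le c\,\mathrm{Tail}(|u|;x_0,r/2)$, and fixing the free parameter $\delta$ at a suitable value, one gets exactly $\mathop{\mathrm{osc}}_{B_0}u\le\omega(r_0)$; this is how the constant $c$ in the definition of $\omega(r_0)$ is selected.

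For the inductive step, assume \eqref{osc of u estimate} for $0,\dots,j$ and set $M_j:=\sup_{B_j}u$, $m_j:=\inf_{B_j}u$, so $M_j-m_j\le\omega(r_j)$. Since $\sigma\le 1/4$, $2B_{j+1}=B(x_0,2\sigma r_j)\subset B(x_0,r_j/2)$, and the sets $\{u\ge M_j-\tfrac{1}{2}\omega(r_j)\}$ and $\{u\le m_j+\tfrac{1}{2}\omega(r_j)\}$ cover $2B_{j+1}$; thus one of them meets $2B_{j+1}$ in measure $\ge\tfrac{1}{2}|2B_{j+1}|$, and (replacing $u$ by $-u$ if needed, again a weak solution with the same $\omega$ and inductive hypothesis) we may assume
\[
\bigl|\{u\ge M_j-\tfrac{1}{2}\omega(r_j)\}\cap 2B_{j+1}\bigr|\ \ge\ \tfrac{1}{2}\,|2B_{j+1}| .
\]
We may also assume $M_j-m_j>(1-\tfrac{1}{2}\kappa)\omega(r_j)$ with $\kappa\in(0,1)$ a structural constant fixed below, since otherwise $\mathop{\mathrm{osc}}_{B_{j+1}}u\le\mathop{\mathrm{osc}}_{B_j}u\le(1-\tfrac{1}{2}\kappa)\omega(r_j)\le\sigma^{\alpha}\omega(r_j)=\omega(r_{j+1})$ as soon as $\sigma^{\alpha}\ge 1-\tfrac{1}{2}\kappa$. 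Under this assumption $M_j-\tfrac{1}{2}\omega(r_j)\ge m_j+\tfrac{1}{4}\omega(r_j)$, so the displayed set lies in $\{u\ge m_j+\tfrac{1}{4}\omega(r_j)\}$.

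\emph{Logarithmic step.} Set $w:=u-m_j\ge 0$ on $B_j$ and apply Corollary \ref{corollary_corollary of logarithmic lemma} to $w$ with outer radius $R=r_j$, inner radius $2r_{j+1}$, $a=\tfrac{1}{4}\omega(r_j)$, $b$ arbitrarily large, and $d:=\sigma^{\tau}\omega(r_j)$ for a fixed $\tau\in(0,sp/(p-1))$. The decisive estimate is that of the nonlocal tail: bounding $w_-=(u-m_j)_-$ on each dyadic annulus $B_i\setminus B_{i+1}$, $i<j$, by a multiple of $\omega(r_i)$ via the inductive hypothesis; bounding the contribution of $\mathbb{G}^n\setminus B_0$ by $\mathrm{Tail}(u;x_0,r_0)$ via the base case and the definition of $\omega(r_0)$; and summing the resulting geometric series (convergent because $\alpha<sp/(p-1)$), while tracking the center-dependent factors $(|x_0|_h+r_i)^{m(n-h)}$ from Lemma \ref{lemma_measure of ball in Grushin spaces}, one obtains $[\mathrm{Tail}(w_-;x_0,r_j)]^{p-1}\le c\,\omega(r_j)^{p-1}r_j^{n-Q}(|x_0|_h+r_j)^{m(n-h)}$. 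The blow-up $d^{1-p}\sim\sigma^{-\tau(p-1)}$ is then absorbed by the gain $(2r_{j+1}/r_j)^{sp}=(2\sigma)^{sp}$ in Corollary \ref{corollary_corollary of logarithmic lemma} (using $\tau(p-1)<sp$, with $\alpha$ negligible), and — once the Grushin measure factors cancel — the right-hand side of the corollary is bounded by an absolute constant $C_0$. Since $v:=\min\{(\log\tfrac{a+d}{w+d})_+,\log b\}$ vanishes on $\{w\ge\tfrac{1}{4}\omega(r_j)\}\cap 2B_{j+1}$ (of measure $\ge\tfrac{1}{2}|2B_{j+1}|$), its average over $2B_{j+1}$ is at most $(2C_0)^{1/p}$, while $v\ge\tau\log\tfrac{1}{\sigma}-c$ on $\{w\le\sigma^{\tau}\omega(r_j)\}$ with $c$ absolute; Chebyshev's inequality then gives
\[
\bigl|\{u<m_j+\ell_0\}\cap 2B_{j+1}\bigr|\ \le\ \frac{C_0}{\bigl(\tfrac{1}{2}\tau\log\frac{1}{\sigma}-(2C_0)^{1/p}\bigr)^{p}}\,|2B_{j+1}|,\qquad \ell_0:=\sigma^{\tau}\omega(r_j),
\]
for $\sigma$ so small that the denominator is positive.

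\emph{De Giorgi step and conclusion.} Fix $\sigma$ small enough that the last quotient is below the smallness threshold of a De Giorgi iteration. Running that iteration over the balls interpolating between $2B_{j+1}$ and $B_{j+1}$ and a sequence of levels decreasing from $m_j+\ell_0$ to $m_j+\tfrac{1}{2}\ell_0$, using the fractional Caccioppoli inequality of Theorem \ref{theo_Caccioppoli estimates with tail} for the truncations $(u-k_l)_-$ (licit because a solution is in particular a supersolution), with the tail term reabsorbed by the same dyadic summation as above and exactly as in the proof of Theorem \ref{theo_local boundedness}, the relevant density is driven to zero, so $u\ge m_j+\tfrac{1}{2}\ell_0$ on $B_{j+1}$. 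Hence $\mathop{\mathrm{osc}}_{B_{j+1}}u\le M_j-m_j-\tfrac{1}{2}\sigma^{\tau}\omega(r_j)\le(1-\tfrac{1}{2}\sigma^{\tau})\omega(r_j)$; putting $\kappa:=\sigma^{\tau}$ and finally choosing $\alpha\in(0,sp/(p-1))$ small enough that $\sigma^{\alpha}\ge 1-\tfrac{1}{2}\sigma^{\tau}$ (and that all the series above converge) closes the induction. The hard part is precisely this bookkeeping: each use of the logarithmic and Caccioppoli estimates generates long-range terms that must be controlled uniformly in $j$, and in the Grushin setting they come weighted by the center-dependent factors $(|x_0|_h+\cdot)^{m(n-h)}$, so one must check that after the geometric summation and the scale choices all these factors recombine into harmless constants — the genuinely new feature with respect to the Euclidean and Heisenberg cases.
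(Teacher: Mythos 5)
Your proposal is correct and follows essentially the same route as the paper's proof: induction with base case from Theorem \ref{theo_local boundedness}, the measure dichotomy on $2B_{j+1}$, the tail bound via the inductive hypothesis and the geometric series controlled by $\alpha<sp/(p-1)$, the density estimate from Corollary \ref{corollary_corollary of logarithmic lemma} via Chebyshev, a De Giorgi iteration with Theorem \ref{theo_Caccioppoli estimates with tail}, and the final choice of $\alpha$ so that $\sigma^{\alpha}$ beats the oscillation gain. The only deviations are cosmetic bookkeeping — you place the Grushin factors $(|x_0|_h+\cdot)^{m(n-h)}$ in the tail estimate rather than in $d$ (the paper takes $d=\epsilon\omega(r_j)r_{j+1}^{(Q-n)/(p-1)}(|x_0|_h+2r_{j+1})^{-m(n-h)/(p-1)}$ with $\epsilon=\sigma^{sp/(p-1)-\alpha}$), and you decouple the level exponent $\tau$ from $\alpha$, which if anything cleans up the circular dependence between $\sigma$ and $\alpha$ — and your verification that these center-dependent factors cancel is consistent with the paper's computation.
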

\begin{proof}
    We proceed by induction. Note that, by the definition of $\omega\left( {r_0} \right)$ and Theorem \ref{theo_local boundedness} (with $\delta=1$ there), the estimate in \eqref{osc of u estimate} holds trivially for $j=0$, as both $(u)_+$ and $(-u)_+$ are weak subsolutions.

    We now make a strong induction assumption, assuming that \eqref{osc of u estimate} holds for all $i\in\{0,\dots,j\}$ for some $j\ge0$. We then demonstrate that it also holds for $j+1$. We have that either
    \begin{equation}\label{u > inf 0.5 in proof of holder continuity}
        \frac{\left| {2B_{j+1}\cap\left\{ {u\ge\inf_{B_j}u+\omega\left( {r_j} \right)/2} \right\}} \right|}{\left| {2B_{j+1}}
            \right|}\ge\frac{1}{2},
    \end{equation} or
    \begin{equation}\label{u < inf 0.5 in proof of holder continuity}
        \frac{\left| {2B_{j+1}\cap\left\{ {u\le\inf_{B_j}u+\omega\left( {r_j} \right)/2} \right\}} \right|}{\left| {2B_{j+1}}
            \right|}\ge\frac{1}{2}
    \end{equation}
    must hold. If \eqref{u > inf 0.5 in proof of holder continuity} holds, we define $u_j:=u-\inf_{B_j}u$; if \eqref{u < inf 0.5 in proof of holder continuity} holds, we define $u_j:=\omega\left( {r_j} \right)-\left( {u-\inf_{B_j}u} \right)$. Consequently, in all cases, we have $u_j\ge0$ in $B_j$, and
    \begin{equation}\label{uj > omega 0.5 in proof of holder continuity}
        \frac{\left| {2B_{j+1}\cap\left\{ {u_j\ge\omega\left( {r_j} \right)/2} \right\}} \right|}{\left| {2B_{j+1}} \right|}\ge\frac{1}{2}.
    \end{equation}
    Moreover, $u_j$ is a weak solution that satisfies:
    \begin{equation}\label{uj < omega in proof of holder continuity}
        \mathop{\sup}_{B_i}\left| {u_j} \right|\le2\omega\left( {r_i} \right) \quad\forall\, i\in\{0,\dots,j\}.
    \end{equation}
    We now claim that, under the induction hypothesis, we obtain
    \begin{equation}\label{tail < omega in proof of holder continuity}
        \left[ {\mathrm{Tail} \left( {u_j;x_{0},r_j} \right)} \right]^{p-1}\le c\sigma^{-\alpha(p-1)}\left[ {\omega\left( {r_j} \right)} \right]^{p-1},
    \end{equation}
    where the constant $c$ depends only on $n,p,s$ and the difference of $sp/(p-1)$ and $\alpha$; however, it is independent of $\sigma$. Indeed, we have
    \begin{equation}\label{tail < omega step1 in proof of holder continuity}
        \begin{aligned}
            \left[ {\mathrm{Tail} \left( {u_j;x_{0},r_j} \right)} \right]^{p-1}=&r_j^{sp}\sum_{i=1}^{j}\int_{B_{i-1}\backslash B_i} {\left| {u_j(x)} \right|^{p-1}\left| {d_C\left( {x,x_0} \right)} \right|^{-Q-sp}}\mathrm{d}x   \\
            &+r_j^{sp}\int_{\mathbb{G}^n\backslash B_0} {\left| {u_j(x)} \right|^{p-1}\left| {d_C\left( {x,x_0} \right)} \right|^{-Q-sp}}\mathrm{d}x   \\
            \le&r_j^{sp}\sum_{i=1}^{j} \left[ {\mathop{\sup}_{B_{i-1}}\left| {u_j} \right|} \right]^{p-1}\int_{\mathbb{G}^n\backslash B_i} {\left| {d_C\left( {x,x_0} \right)} \right|^{-Q-sp}}\mathrm{d}x   \\
            &+r_j^{sp}\int_{\mathbb{G}^n\backslash B_0} {\left| {u_j(x)} \right|^{p-1}\left| {d_C\left( {x,x_0} \right)} \right|^{-Q-sp}}\mathrm{d}x   \\
            \le&c\sum_{i=1}^{j}\left( {\frac{r_j}{r_i}} \right)^{sp}\left[ {\omega\left( {r_{i-1}} \right)}
            \right]^{p-1},
        \end{aligned}
    \end{equation}
    where in the last line we also have used \eqref{uj < omega in proof of holder continuity} and the fact that
    \begin{equation*}
        \begin{aligned}
            &\int_{\mathbb{G}^n\backslash B_0} {\left| {u_j(x)} \right|^{p-1}\left| {d_C\left( {x,x_0} \right)} \right|^{-Q-sp}}\mathrm{d}x   \\
            \le&cr_0^{-sp}\mathop{\sup}_{B_0}\left| {u} \right|^{p-1}+cr_0^{-sp}\left[ {\omega\left( {r_0} \right)} \right]^{p-1}+c\int_{\mathbb{G}^n\backslash B_0} {\left| {u(x)} \right|^{p-1}\left| {d_C\left( {x,x_0} \right)} \right|^{-Q-sp}}\mathrm{d}x   \\
            \le&cr_1^{-sp}\left[ {\omega\left( {r_0} \right)} \right]^{p-1}.
        \end{aligned}
    \end{equation*}
    Moreover, we have
    \begin{equation}\label{tail < omega step2 in proof of holder continuity}
        \begin{aligned}
            &\sum_{i=1}^{j}\left( {\frac{r_j}{r_i}} \right)^{sp}\left[ {\omega\left( {r_{i-1}} \right)} \right]^{p-1}   \\
            =&\left[ {\omega\left( {r_{0}} \right)} \right]^{p-1}\left( {\frac{r_j}{r_0}} \right)^{\alpha(p-1)}\sum_{i=1}^{j}\left( {\frac{r_{i-1}}{r_i}} \right)^{\alpha(p-1)}\left( {\frac{r_j}{r_i}} \right)^{sp-\alpha(p-1)}   \\
            =&\left[ {\omega\left( {r_{j}} \right)} \right]^{p-1}\sigma^{-\alpha(p-1)}\sum_{i=0}^{j-1}\sigma^{i(sp-\alpha(p-1))}   \\
            \le&\left[ {\omega\left( {r_{j}} \right)} \right]^{p-1}\frac{\sigma^{-\alpha(p-1)}}{1-\sigma^{sp-\alpha(p-1)}}   \\
            \le&\frac{4^{sp-\alpha(p-1)}}{\log(4)(sp-\alpha(p-1))}\sigma^{-\alpha(p-1)}\left[ {\omega\left( {r_{j}} \right)}
            \right]^{p-1},
        \end{aligned}
    \end{equation}
    where we used the fact that $\sigma\le1/4$ and $\alpha<sp/(p-1)$. Combining \eqref{tail < omega step1 in proof of holder continuity}
    with \eqref{tail < omega step2 in proof of holder continuity}, we deduces  the desired estimate in \eqref{tail < omega in proof of holder continuity}.

    Next, we consider the function $v$ defined by
    \begin{equation}\label{v def in proof of holder continuity}
        v:=\min\left\{ {\left[ {\log\left( {\frac{\omega\left( {r_j} \right)/2+d}{u_j+d}} \right)} \right]_+,k} \right\},\quad k>0.
    \end{equation}
    By applying Corollary \ref{corollary_corollary of logarithmic lemma},  with $a\equiv\omega\left( {r_j} \right)/2$ and $b\equiv \exp(k)$, we obtain
    \begin{equation*}
        \begin{aligned}
            &\dashint_{2B_{j+1}} {\left| {v-(v)_{2B_{j+1}}} \right|^p}\mathrm{d}x   \\
            &\le c\left( {\left| {x_0} \right|_h+2r_{j+1}} \right)^{-m(n-h)}\left\{ {d^{1-p}\left( {\frac{r_{j+1}}{r_j}} \right)^{sp}r_{j+1}^{Q-n}\left[ {\mathrm{Tail}\left( {u_{j};x_{0},r_j} \right)} \right]^{p-1}+\left( {\left| {x_0} \right|_h+4r_{j+1}} \right)^{m(n-h)}} \right\}.
        \end{aligned}
    \end{equation*}
    Thus, as a consequence of the estimate in \eqref{tail < omega in proof of holder continuity}, we obtain
    \begin{equation*}
        \begin{aligned}
            &\dashint_{2B_{j+1}} {\left| {v-(v)_{2B_{j+1}}} \right|^p}\mathrm{d}x   \\
            &\le c\left( {\left| {x_0} \right|_h+2r_{j+1}} \right)^{-m(n-h)}\left\{ {d^{1-p}\sigma^{sp-\alpha(p-1)}r_{j+1}^{Q-n}\left[ {\omega\left( {r_j} \right)} \right]^{p-1}+\left( {\left| {x_0} \right|_h+4r_{j+1}} \right)^{m(n-h)}} \right\}   \\
            &\le c\left\{ {\left( {\left| {x_0} \right|_h+2r_{j+1}} \right)^{-m(n-h)}d^{1-p}\sigma^{sp-\alpha(p-1)}r_{j+1}^{Q-n}\left[ {\omega\left( {r_j} \right)} \right]^{p-1}+1} \right\}.
        \end{aligned}
    \end{equation*}
    Therefore, by choosing $d=\epsilon\omega\left( {r_j} \right)r_{j+1}^{\frac{Q-n}{p-1}}\left( {\left| {x_0} \right|_h+2r_{j+1}} \right)^{\frac{-m(n-h)}{p-1}}$ with
    \begin{equation}\label{epsilon in proof of holder continuity}
        \epsilon:=\sigma^{\frac{sp}{p-1}-\alpha},
    \end{equation}
    we obtain
    \begin{equation}\label{v < c in proof of holder continuity}
        \dashint_{2B_{j+1}} {\left| {v-(v)_{2B_{j+1}}} \right|^p}\mathrm{d}x\le c,
    \end{equation}
    where the constant $c$ depends only on $n,h,p,s,m$ and the difference of $sp/(p-1)$ and $\alpha$.

    To proceed, let us denote $\tilde{B}\equiv2B_{j+1}$ for brevity. In accordance with \eqref{uj > omega 0.5 in proof of holder continuity} and \eqref{v def in proof of holder continuity}, we can express
    \begin{equation*}
        \begin{aligned}
            k=&\frac{1}{\left| {\tilde{B}\cap\left\{ {u_j\ge\omega\left( {r_j} \right)/2} \right\}} \right|}\int_{\tilde{B}\cap\left\{ {u_j\ge\omega\left( {r_j} \right)/2} \right\}} {k}\mathrm{d}x   \\
            =&\frac{1}{\left| {\tilde{B}\cap\left\{ {u_j\ge\omega\left( {r_j} \right)/2} \right\}} \right|}\int_{\tilde{B}\cap\left\{ {v=0} \right\}} {k}\mathrm{d}x   \\
            \le&\frac{2}{\left| {\tilde{B}} \right|}\int_{\tilde{B}} {(k-v)}\mathrm{d}x=2\left[ {k-(v)_{\tilde{B}}} \right].
        \end{aligned}
    \end{equation*}
    By integrating the preceding inequality over the set $\tilde{B}\cap\left\{ {v=k} \right\}$, we obtain
    \begin{equation*}
        \begin{aligned}
            \frac{\left| {\tilde{B}\cap\left\{ {v=k} \right\}} \right|}{\left| {\tilde{B}} \right|}k\le&\frac{2}{\left| {\tilde{B}} \right|}\int_{\tilde{B}\cap\left\{ {v=k} \right\}} {\left[ {k-(v)_{\tilde{B}}} \right]}\mathrm{d}x   \\
            \le&\frac{2}{\left| {\tilde{B}} \right|}\int_{\tilde{B}} {\left[ {v-(v)_{\tilde{B}}} \right]}\mathrm{d}x\le c,
        \end{aligned}
    \end{equation*}
    where we also have used \eqref{v < c in proof of holder continuity}. Let us take
    \begin{equation*}
        \begin{aligned}
            k&=\log\left( {\frac{\omega\left( {r_j} \right)/2+\epsilon r_{j+1}^{\frac{Q-n}{p-1}}\left( {\left| {x_0} \right|_h+2r_{j+1}} \right)^{\frac{-m(n-h)}{p-1}}\omega\left( {r_j} \right)}{\left[ {2+r_{j+1}^{\frac{Q-n}{p-1}}\left( {\left| {x_0} \right|_h+2r_{j+1}} \right)^{\frac{-m(n-h)}{p-1}}} \right]\epsilon\omega\left( {r_j} \right)}} \right)   \\
            &=\log\left( {\frac{1/2+\epsilon r_{j+1}^{\frac{Q-n}{p-1}}\left( {\left| {x_0} \right|_h+2r_{j+1}} \right)^{\frac{-m(n-h)}{p-1}}}{\left[ {2+r_{j+1}^{\frac{Q-n}{p-1}}\left( {\left| {x_0} \right|_h+2r_{j+1}} \right)^{\frac{-m(n-h)}{p-1}}} \right]\epsilon}} \right)   \\
            &\approx\log\frac{1}{\epsilon},
        \end{aligned}
    \end{equation*}
    so that
    \begin{equation*}
        \frac{\left| {\tilde{B}\cap\left\{ {v=k} \right\}} \right|}{\left| {\tilde{B}} \right|}k\le
        c,
    \end{equation*}
    which yields that
    \begin{equation}\label{c/k < clog in proof of holder continuity}
        \frac{\left| {\tilde{B}\cap\left\{ {u_j\le2\epsilon\omega\left( {r_j} \right)} \right\}} \right|}{\left| {\tilde{B}} \right|}\le \frac{c}{k}\le\frac{c_{\log}}{\log\left( {\frac{1}{\sigma}} \right)},
    \end{equation}
    where the constant $c_{\log}$ depends only on $n,h,p,s,m$ and the difference of $sp/(p-1)$ and $\alpha$ via the definition of $\epsilon$ in \eqref{epsilon in proof of holder continuity}.

    We are now prepared to initiate an appropriate iteration to deduce the desired oscillation reduction. First, for any $i=0,1,2,\dots$, we define
    \begin{equation*}
        \varrho_i=r_{j+1}+2^{-i}r_{j+1},\quad\tilde{\varrho}_i:=\frac{\varrho_i+\varrho_{i+1}}{2},\quad B^i=B_{\varrho_i},\quad\tilde{B}^i=B_{\tilde{\varrho}_i}
    \end{equation*}
    and the corresponding cut-off functions
    \begin{equation*}
        \phi_i\in C_{0}^{\infty}\left( {\tilde{B}^i} \right) ,\quad0\le\phi_i\le1,\quad\phi_i\equiv1\,\,\mathrm{on}\,\,B^{i+1},\,\,\mathrm{and}\,\,\left| {\nabla_{\mathbb{G}^{n}}\phi_i} \right|<c\varrho_i^{-1}.
    \end{equation*}
    Furthermore, set
    \begin{equation*}
        k_i=\left( {1+2^{-i}} \right)\epsilon\omega\left( {r_j} \right),\quad w_i:=\left( {k_i-u_j} \right)_+,
    \end{equation*}
    and
    \begin{equation*}
        A_i=\frac{\left| {B^i\cap\left\{ {u_j\le k_i} \right\}} \right|}{\left| {B^i} \right|}.
    \end{equation*}
    We now focus on the subcritical case where $sp<Q$. The Caccioppoli inequality in \eqref{Caccioppoli estimates with tail} yields
    \begin{equation}\label{use Caccioppoli inequality 1 in proof of holder continuity}
        \begin{aligned}
            &\int_{B^i}\int_{B^i} {\left| {d_{C}(x,y)} \right|^{-Q-sp}\left| {w_{i}(x)\phi_i(x)-w_{i}(y)\phi_i(y)} \right|^{p}}\mathrm{d}x\mathrm{d}y   \\
            \le &c\int_{B^i}\int_{B^i} {\left| {d_{C}(x,y)} \right|^{-Q-sp}{\left( {\mathrm{max}\left\{ {w_{i}(x),w_{i}(y)} \right\}} \right)}^{p}\left| {\phi_i(x)-\phi_i(y)} \right|^{p}}\mathrm{d}x\mathrm{d}y   \\
            &+c\int_{B^i} {w_{i}(x)\phi_i^{p}(x)}\mathrm{d}x\left( {\mathop{\sup}_{y\in\tilde{B}^i}\int_{{\mathbb{G}}^{n}\backslash B^i}
                {\left| {d_{C}(x,y)} \right|^{-Q-sp}w_{i}^{p-1}(x)}\mathrm{d}x}
            \right).
        \end{aligned}
    \end{equation}
    Setting $p^*=\frac{(2n-Q)p}{2n-Q-sp}$, we can estimate the term on the left-hand side as follows:
    \begin{equation}\label{left-hand term estimate 1 in proof of holder continuity}
        \begin{aligned}
            &A_{i+1}^{\frac{p}{p^*}}\left( {k_i-k_{i+1}} \right)^p   \\
            =&\frac{1}{\left| {B^{i+1}} \right|^{\frac{p}{p^*}}}\left( {\int_{B^{i+1}\cap\left\{ {u_j\le k_{i+1}} \right\}} {\left( {k_i-k_{i+1}} \right)^{p^*}\phi_i^{p^{*}}(x)}\mathrm{d}x} \right)^{\frac{p}{p^*}}   \\
            \le&\frac{1}{\left| {B^{i+1}} \right|^{\frac{p}{p^*}}}\left( {\int_{B^{i}} {w_i^{p^*}(x)\phi_i^{p^{*}}(x)}\mathrm{d}x} \right)^{\frac{p}{p^*}}   \\
            \le&cr_{j+1}^{sp-2n+Q}\left( {\left| {x_0} \right|_h+r_{j+1}} \right)^{\frac{m(n-h)(sp-2n+Q)}{2n-Q}}   \\
            &\times\int_{B^i}\int_{B^i} {\left| {d_{C}(x,y)} \right|^{-Q-sp}\left| {w_{i}(x)\phi_i(x)-w_{i}(y)\phi_i(y)} \right|^{p}}\mathrm{d}x\mathrm{d}y.
        \end{aligned}
    \end{equation}
    Recalling that $\left| {\nabla_{\mathbb{G}^{n}}\phi_i} \right|<c2^ir_{j+1}^{-1}$, we can handle the first term on the right-hand side of \eqref{use Caccioppoli inequality 1 in proof of holder continuity} as
    \begin{equation}
        \begin{aligned}\label{right-hand 1st term estimate 1 in proof of holder continuity}
            &r_{j+1}^{sp}\int_{B^i}\int_{B^i} {\left| {d_{C}(x,y)} \right|^{-Q-sp}{\left( {\mathrm{max}\left\{ {w_{i}(x),w_{i}(y)} \right\}} \right)}^{p}\left| {\phi_i(x)-\phi_i(y)} \right|^{p}}\mathrm{d}x\mathrm{d}y   \\
            \le&2^{ip}r_{j+1}^{sp}r_{j+1}^{-p}k_i^p\int_{B^i\cap\left\{ {u_j\le k_i} \right\}}\int_{B^i} {\left| {d_{C}(x,y)} \right|^{-Q-sp+p}}\mathrm{d}x\mathrm{d}y   \\
            \le&c2^{ip}\left[ {\epsilon\omega\left( {r_j} \right)} \right]^{p}r_{j+1}^{n-Q}\left( {\left| {x_0} \right|_h+r_{j+1}} \right)^{m(n-h)}\left| {B^i\cap\left\{ {u_j\le k_i} \right\}} \right|.
        \end{aligned}
    \end{equation}
    Moreover,
    \begin{equation}\label{right-hand 2nd.1 term estimate 1 in proof of holder continuity}
        \int_{B^i} {w_{i}(x)\phi_i^{p}(x)}\mathrm{d}x\le c\left[ {\epsilon\omega\left( {r_j} \right)} \right]\left| {B^i\cap\left\{ {u_j\le k_i} \right\}} \right|.
    \end{equation}
    Now, we observe that for any $x\in\mathbb{G}^n\backslash B^i$, we have
    \begin{equation*}
        \mathop{\inf}_{y\in\tilde{B}^i}d_c(x,y)\ge d_c\left( {x,x_0} \right)\mathop{\inf}_{y\in\tilde{B}^i}\left( {1-\frac{d_c\left( {x_0,y} \right)}{d_c\left( {x,x_0} \right)}} \right)\ge d_c\left( {x,x_0} \right)\left( {1-\frac{\tilde{\varrho}_i}{\varrho_i}} \right)\ge 2^{-i-1}d_c\left( {x,x_0} \right)
    \end{equation*}
    and obtain the fact that
    \begin{equation*}
        B\left( {x_0,r_{j+1}} \right)\equiv B_{j+1}\subset B^i\Rightarrow\mathbb{G}^n\backslash B^i\subset\mathbb{G}^n\backslash B_{j+1}.
    \end{equation*}
    Consequently, we have
    \begin{equation}\label{right-hand 2nd.2 term step1 estimate 1 in proof of holder continuity}
        r_{j+1}^{sp}\left( {\mathop{\sup}_{y\in\tilde{B}^i}\int_{{\mathbb{G}}^{n}\backslash B^i} {\left| {d_{C}(x,y)} \right|^{-Q-sp}w_{i}^{p-1}(x)}\mathrm{d}x} \right)\le2^{i(Q+sp)}\left[ {\mathrm{Tail} \left( {w_i;x_{0},r_{j+1}} \right)} \right]^{p-1}.
    \end{equation}
    Recalling \eqref{tail < omega in proof of holder continuity} and the facts that $w_i\le2\epsilon\omega\left( {r_j} \right)$ in $B_j$ and $w_i\le\left| {u_j} \right|+2\epsilon\omega\left( {r_j} \right)$ in $\mathbb{G}^n$, we further get
    \begin{equation}\label{right-hand 2nd.2 term step2 estimate 1 in proof of holder continuity}
        \begin{aligned}
            &\left[ {\mathrm{Tail} \left( {w_i;x_{0},r_{j+1}} \right)} \right]^{p-1}   \\
            \le&cr_{j+1}^{sp}\int_{B_j\backslash B_{j+1}} {w_i^{p-1}(x)\left| {d_C\left( {x,x_0} \right)} \right|^{-Q-sp}}\mathrm{d}x+c\left( {\frac{r_{j+1}}{r_j}} \right)^{sp}\left[ {\mathrm{Tail} \left( {w_i;x_{0},r_{j}} \right)} \right]^{p-1}   \\
            \le&cr_{j+1}^{n-Q}\left( {\left| {x_0} \right|_h+r_{j}} \right)^{m(n-h)}\epsilon^{p-1}\omega\left( {r_j} \right)^{p-1}+c\sigma^{sp}\left[ {\mathrm{Tail} \left( {w_i;x_{0},r_{j}} \right)} \right]^{p-1}   \\
            \le&c\left[ {r_{j+1}^{n-Q}\left( {\left| {x_0} \right|_h+r_{j}} \right)^{m(n-h)}+\frac{\sigma^{sp-\alpha(p-1)}}{\epsilon^{p-1}}} \right]\left[ {\epsilon\omega\left( {r_j} \right)} \right]^{p-1}   \\
            =&c\left[ {r_{j+1}^{n-Q}\left( {\left| {x_0} \right|_h+r_{j}} \right)^{m(n-h)}+1} \right]\left[ {\epsilon\omega\left( {r_j} \right)} \right]^{p-1}
        \end{aligned}
    \end{equation}
    by the definition of $\epsilon$ in \eqref{epsilon in proof of holder continuity}, where the constant $c$ depends only on $n,h,p,s,m$ and $\alpha$. Combining the estimate \eqref{right-hand 2nd.2 term step1 estimate 1 in proof of holder continuity} with \eqref{right-hand 2nd.2 term step2 estimate 1 in proof of holder continuity}, we obtain
    \begin{equation}\label{right-hand 2nd.2 term estimate 1 in proof of holder continuity}
        \begin{aligned}
            &r_{j+1}^{sp}\left( {\mathop{\sup}_{y\in\tilde{B}^i}\int_{{\mathbb{G}}^{n}\backslash B^i} {\left| {d_{C}(x,y)} \right|^{-Q-sp}w_{i}^{p-1}(x)}\mathrm{d}x} \right)   \\
            \le& c2^{i(Q+sp)}\left[ {r_{j+1}^{n-Q}\left( {\left| {x_0} \right|_h+r_{j+1}} \right)^{m(n-h)}+1} \right]\left[ {\epsilon\omega\left( {r_j} \right)} \right]^{p-1}.
        \end{aligned}
    \end{equation}
    By combining \eqref{use Caccioppoli inequality 1 in proof of holder continuity}, \eqref{left-hand term estimate 1 in proof of holder continuity}, \eqref{right-hand 1st term estimate 1 in proof of holder continuity}, \eqref{right-hand 2nd.1 term estimate 1 in proof of holder continuity} and \eqref{right-hand 2nd.2 term estimate 1 in proof of holder continuity}, we arrive at
    \begin{equation*}
        A_{i+1}^{\frac{p}{p^*}}\left( {k_i-k_{i+1}} \right)^p\le c\left( {\left| {x_0} \right|_h+r_{j+1}} \right)^{\frac{m(n-h)sp}{2n-Q}}\left[ {2\left( {\left| {x_0} \right|_h+r_{j+1}} \right)^{m(n-h)}+1} \right]2^{i(Q+sp+p)}\left[ {\epsilon\omega\left( {r_j} \right)} \right]^{p}A_i,
    \end{equation*}
    which yields, recalling that $r_{j+1}<r$,
    \begin{equation}\label{iteration1 in proof of holder continuity}
        A_{i+1}\le c\left\{ {\left( {\left| {x_0} \right|_h+r} \right)^{\frac{m(n-h)sp}{2n-Q}}\left[ {2\left( {\left| {x_0} \right|_h+r} \right)^{m(n-h)}+1} \right]} \right\}^{\frac{p^*}{p}}2^{i(Q+sp+2p)\frac{p^*}{p}}A_i^{1+\beta}
    \end{equation}
    with $\beta:=sp/(2n-Q-sp)$ according to the definition of $k_i$'s. Now, we recall that if we establish the following estimate on $A_0$, where
    \begin{equation}\label{A0 require1 in proof of holder continuity}
        \begin{aligned}
            A_0=&\frac{\left| {\tilde{B}\cap\left\{ {u_j\le 2\epsilon\omega\left( {r_j} \right)} \right\}} \right|}{\left| {\tilde{B}} \right|}   \\
            \le&c^{-\frac{1}{\beta}}\left\{ {\left( {\left| {x_0} \right|_h+r} \right)^{\frac{m(n-h)sp}{2n-Q}}\left[ {2\left( {\left| {x_0} \right|_h+r} \right)^{m(n-h)}+1} \right]} \right\}^{-\frac{p^*}{p\beta}}2^{-\frac{(Q+sp+2p)p^*}{p\beta^2}}   \\
            =:&\nu^*,
        \end{aligned}
    \end{equation}
    then by the iteration relation \eqref{iteration1 in proof of holder continuity}, we
    have
    \begin{equation*}
        A_i\le c^{-\frac{1}{\beta}}\left\{ {\left( {\left| {x_0} \right|_h+r} \right)^{\frac{m(n-h)sp}{2n-Q}}\left[ {2\left( {\left| {x_0} \right|_h+r} \right)^{m(n-h)}+1} \right]} \right\}^{-\frac{p^*}{p\beta}}2^{-\frac{(Q+sp+2p)p^*(i\beta+1)}{p\beta^2}},
    \end{equation*}
    which implies $A_j\to0$ as $j\to\infty$. Indeed, the condition \eqref{A0 require1 in proof of holder continuity} is guaranteed by \eqref{c/k < clog in proof of holder continuity} with the choice
    \begin{equation*}
        \sigma=\min\left\{ {\frac{1}{4},e^{-\frac{c_{\log}}{\nu^*}}} \right\},
    \end{equation*}
    which depends only on $n,h,p,s,m,\Omega$ and the difference of $sp/(p-1)$ and $\alpha$. In other words, we have demonstrated that
    \begin{equation*}
        \mathop{\mathrm{osc}}_{B_{j+1}}u\le\left( {1-\epsilon} \right)\omega\left( {r_j} \right)=\left( {1-\epsilon} \right)\left( {\frac{r_j}{r_{j+1}}} \right)^{\alpha}\omega\left( {r_{j+1}} \right)=\left( {1-\epsilon} \right)\sigma^{-\alpha}\omega\left( {r_{j+1}} \right).
    \end{equation*}
    Finally, by taking $\alpha\in\left( {0,\frac{sp}{p-1}} \right)$ small enough such that
    \begin{equation*}
        \sigma^{\alpha}\ge1-\epsilon=1-\sigma^{\frac{sp}{p-1}-\alpha},
    \end{equation*}
    then, it is clear that $\alpha$ depends only on $n,h,p,s,m,\Omega$ and
    \begin{equation*}
        \mathop{\mathrm{osc}}_{B_{j+1}}u\le\omega\left( {r_{j+1}} \right).
    \end{equation*}
    This completes the induction step and concludes the proof for the case $sp<2n-Q$.

    For the remaining case, namely when $sp=2n-Q$, we proceed similarly to the proof of local boundedness.
    Choose $0<s_1<s<1$, and we are justified in applying the H{\"o}lder inequality, which yields, for any $p<q<p^{*}_1=\frac{(2n-Q)p}{2n-Q-s_1p}$,
    \begin{equation}\label{left-hand term estimate 2 in proof of holder continuity}
        \begin{aligned}
            &A_{i+1}^{\frac{p}{q}}\left( {k_i-k_{i+1}} \right)^p   \\
            =&\frac{1}{\left| {B^{i+1}} \right|^{\frac{p}{q}}}\left( {\int_{B^{i+1}\cap\left\{ {u_j\le k_{i+1}} \right\}} {\left( {k_i-k_{i+1}} \right)^{q}\phi_i^{q}(x)}\mathrm{d}x} \right)^{\frac{p}{q}}   \\
            \le&\frac{1}{\left| {B^{i+1}} \right|^{\frac{p}{q}}}\left( {\int_{B^{i}} {w_i^{q}(x)\phi_i^{q}(x)}\mathrm{d}x} \right)^{\frac{p}{q}}   \\
            \le&\frac{\left| {B^{i}} \right|^{\frac{\left( {p_1^*-q} \right)p}{qp_1^*}}}{\left| {B^{i+1}} \right|^{\frac{p}{q}}}\left( {\int_{B^{i}} {w_i^{p_1^*}(x)\phi_i^{p_1^*}(x)}\mathrm{d}x} \right)^{\frac{p}{p_1^*}}   \\
            \le&cr_{j+1}^{s_1p-2n+Q}\left( {\left| {x_0} \right|_h+r_{j+1}} \right)^{\frac{m(n-h)(s_1p-2n+Q)}{2n-Q}}   \\
            &\times\int_{B^i}\int_{B^i} {\left| {d_{C}(x,y)} \right|^{-n-s_1p}\left| {w_{i}(x)\phi_i(x)-w_{i}(y)\phi_i(y)} \right|^{p}}\mathrm{d}x\mathrm{d}y.
        \end{aligned}
    \end{equation}
    The term on the right-hand side of \eqref{left-hand term estimate 2 in proof of holder continuity} can be estimated using the nonlocal Caccioppoli inequality, as in \eqref{use Caccioppoli inequality 1 in proof of holder continuity}, along with the analogous estimates in \eqref{right-hand 1st term estimate 1 in proof of holder continuity}, \eqref{right-hand 2nd.1 term estimate 1 in proof of holder continuity} and \eqref{right-hand 2nd.2 term estimate 1 in proof of holder continuity}. Thus, we obtain
    \begin{equation*}
        \begin{aligned}
            &A_{i+1}^{\frac{p}{q}}\left( {k_i-k_{i+1}} \right)^p   \\
            \le&c\left( {\left| {x_0} \right|_h+r_{j+1}} \right)^{\frac{m(n-h)s_1p}{2n-Q}}\left[ {2\left( {\left| {x_0} \right|_h+r_{j+1}} \right)^{m(n-h)}+1} \right]2^{i(Q+s_1p+p)}\left[ {\epsilon\omega\left( {r_j} \right)} \right]^{p}A_i,
        \end{aligned}
    \end{equation*}
    which yields, recalling that $r_{j+1}<r$,
    \begin{equation}\label{iteration2 in proof of holder continuity}
        A_{i+1}\le c\left\{ {\left( {\left| {x_0} \right|_h+r_{j+1}} \right)^{\frac{m(n-h)s_1p}{2n-Q}}\left[ {2\left( {\left| {x_0} \right|_h+r_{j+1}} \right)^{m(n-h)}+1} \right]} \right\}^{\frac{q}{p}}2^{i(Q+s_1p+2p)\frac{q}{p}}A_i^{1+\beta}
    \end{equation}
    with $\beta:=q/p-1>0$ according to the definition of $k_i$'s. Now, we recall that if we establish the following estimate on $A_0$, where
    \begin{equation}\label{A0 require2 in proof of holder continuity}
        \begin{aligned}
            A_0=&\frac{\left| {\tilde{B}\cap\left\{ {u_j\le 2\epsilon\omega\left( {r_j} \right)} \right\}} \right|}{\left| {\tilde{B}} \right|}   \\
            \le&c^{-\frac{1}{\beta}}\left\{ {\left( {\left| {x_0} \right|_h+r_{j+1}} \right)^{\frac{m(n-h)s_1p}{2n-Q}}\left[ {2\left( {\left| {x_0} \right|_h+r_{j+1}} \right)^{m(n-h)}+1} \right]} \right\}^{-\frac{q}{p\beta}}2^{-\frac{(Q+s_1p+2p)q}{p\beta^2}}   \\
            =:&\bar{\nu}.
        \end{aligned}
    \end{equation}
    Then by the iteration relation \eqref{iteration2 in proof of holder continuity}, we can deduce that
    \begin{equation*}
        A_i\le c^{-\frac{1}{\beta}}\left\{ {\left( {\left| {x_0} \right|_h+r_{j+1}} \right)^{\frac{m(n-h)s_1p}{2n-Q}}\left[ {2\left( {\left| {x_0} \right|_h+r_{j+1}} \right)^{m(n-h)}+1} \right]} \right\}^{-\frac{q}{p\beta}}2^{-\frac{(Q+s_1p+2p)q(i\beta+1)}{p\beta^2}},
    \end{equation*}
    which implies $A_j\to0$ as $j\to\infty$. Similarly, the condition \eqref{A0 require2 in proof of holder continuity} is guaranteed by \eqref{c/k < clog in proof of holder continuity} with the choice
    \begin{equation*}
        \sigma=\min\left\{ {\frac{1}{4},e^{-\frac{c_{\log}}{\bar{\nu}}}} \right\}.
    \end{equation*}
    As in the case $sp<2n-Q$, this completes the induction step and concludes the proof for the case $sp=2n-Q$.
\end{proof}
\end{section}

\subsection*{Acknowledgements}    {Yu Liu was supported by the Beijing Natural
    Science Foundation of China (No.\,1232023) and the National
Natural Science Foundation of China (No.\,12471089, No.\,12271042).}
Shaoguang Shi was supported  supported by the National Natural
Science Foundation of China (No.\,12271232, No.\,12071197).

\subsection*{Data availability}  No data was used for the research described in the article.

\bibliographystyle{amsplain}

\end{document}